\numberwithin{equation}{section}
\newcommand{\R}{\mathbb{R}}
\newcommand{\N}{\mathbb{N}}
\newcommand{\h}{\mathfrak{H}}
\newcommand{\vi}{\mathfrak{V}}
\newtheorem{teo}{Theorem}[section]
\newtheorem{defin}[teo]{Definition}
\newtheorem{rem}[teo]{Remark}
\newtheorem{prop}[teo]{Proposition}
\newtheorem{lemma}[teo]{Lemma}
\newtheorem{cor}[teo]{Corollary}
\begin{document}

\author{Giuliano Lazzaroni}\author{Riccardo Molinarolo}\author{Francesco Solombrino}
\address[G.\ Lazzaroni]{Dipartimento di Matematica e Informatica ``Ulisse Dini'',
Universit\`a degli Studi di Firenze, Viale Morgagni 67/a, 50134 Firenze, Italy}
\email{giuliano.lazzaroni@unifi.it}
\address[R.\ Molinarolo and F.\ Solombrino]{Dipartimento di Matematica e Applicazioni ``Renato Caccioppoli'',
Universit\`a degli Studi di Napoli Federico II, Via Cintia, Monte S.\ Angelo,
80126 Napoli, Italy}
\email{riccardo.molinarolo@unina.it}
\email{francesco.solombrino@unina.it}
\thanks{\today}
\title[Radial solutions for a dynamic debonding model in dimension two]{Radial solutions for a dynamic debonding model \\ in dimension two}

\begin{abstract}
In this paper we deal with a debonding model for a thin film in dimension two, where the wave equation on a time-dependent domain is coupled with a flow rule (Griffith's principle) for the evolution of the domain. We propose a general definition of energy release rate, which is central in the formulation of Griffith's criterion. Next, by means of an existence result, we show that such definition is well posed in the special case of radial solutions, which allows us to employ representation formulas typical of one-dimensional models.
\end{abstract}

\maketitle

\noindent
{\bf Keywords:}  Thin films; Dynamic debonding; Wave equation in time-dependent domains; Dynamic energy release rate; Energy-dissipation balance; Maximum dissipation principle; Griffith's criterion; Dynamic
fracture. \par

\bigskip

\noindent   
{{\bf 2020 Mathematics Subject Classification:}} 
2020 MSC:
35L85, 
35Q74, 
35R35, 
74H20, 
74K35. 

\bigskip

\section*{Introduction}
%
In recent years, mathematical analysis gave a fundamental contribution to the theory of dynamic fracture, by means of existence and uniqueness results which show that mechanical models are well posed. This issue was studied in several frameworks: sharp crack \cite{NicSae07,DMLarToa15,DMLarToa17,CapLucTas,CapSap}, phase field \cite{LarOrtSue10,LRTT14,Caponi-NoDEA}, delamination \cite{RosRou11,Rou13a,RosTho16}. 
However, further understanding is needed of propagation criteria capable of predicting crack paths without geometric constraints. In contrast, in the quasistatic setting, where inertial effects are neglected, different notions of solutions were discussed, see \cite{BouFraMar08,MieRou15} and references therein.

In this paper we deal with a closely related model, that is dynamic debonding \cite{Fre90}.
We consider a flexible, inextensible, thin film, initially attached to a planar rigid substrate. The film is progressively peeled off by applying a tension and an opening to its edge. The free part of the film (debonded region, subject to inertia) is parametrized in the reference configuration by a time-dependent domain where (the third component of) the displacement satisfies the equation of the vibrating membrane, i.e., the wave equation. The part of the film still attached to the substrate is called bonded region. The interface between the two parts is called debonding front. Its evolution is unknown and governed by energetic criteria, which results in a flow rule coupled to the equation of motion of the free part (as typical in dynamic fracture). The problem is to determine the evolution of the debonding front and of the displacement.

Problems of dynamic debonding were considered in \cite{DouMarCha08,LBDM12,DMLazNar16,LazNar-qs,LazNar-speed,LazNar-init,RiNa2020,Riva-MJM,Riva-JNLS} assuming that the debonding front is a line orthogonal to the $x_1$-axis and the displacement is parallel to the $x_1$-axis and depends only on $x_1$ and not on $x_2$. Under such assumptions the setting is one-dimensional, so one can exploit the properties of the wave equation in dimension one, for instance the formula of d'Alembert, in order to completely solve the problem.
For numerical modeling we mention e.g.\ \cite{AbdDeb} and references therein.

Here we attack the two-dimensional model by considering a special case with radial solutions. More precisely, we assume that at the initial time the bonded region is a disk whose center is the origin (so the initial debonding front is a circle). Outside of such disk, the initial displacement is radial (i.e., it depends only on the distance from the center of the disk), as well as the initial speed and the time-dependent boundary condition.
Under such assumptions, and in absence of body forces, we show that the problem of dynamic debonding has a radial solution, i.e., at every time the debonding front is a circle centered at the origin and the displacement is a radial function.

To obtain this result, we first consider a prescribed evolution of the debonding front, so we fix
a nondecreasing, Lipschitz function $t\mapsto\rho(t)$ and consider the time-dependent domain $B_{R}\setminus B_{R-\rho(t)}$.
We show that there is a unique radial solution to the wave equation on the time-dependent domain $B_{R}\setminus B_{R-\rho(t)}$, complemented with radial initial and boundary conditions (see Theorem \ref{teoexistenceh}).
Notice that the existence of solutions to the wave equation in a growing domain could be proven by abstract methods, see e.g.\ \cite{BBL98}. On the other hand, if $\rho$ is sufficiently regular, uniqueness follows from methods by Ladyzenskaya \cite{Lady}, which in our case show a posteriori that the unique solution is radial (Remark \ref{rmk:uniq} and Appendix \ref{app:uniq}). 
Anyhow, in our paper we provide a proof based on explicit representation formulas, which turn out useful in the analysis of the coupled problem.

In fact, when the evolution of the debonding front $t\mapsto\rho(t)$ is not a priori known, we show that it can be selected by a flow rule, called Griffith's principle and based on energetic criteria (see Definition \ref{def:sol-coupled} and Theorem \ref{thm:main}).
In order to state such propagation law, a central quantity is the energy release rate, which accounts for the energy variation due to an infinitesimal growth of the domain.
After proving that this sort of energy derivative exists, one can state Griffith's criterion, which requires that the domain is nondecreasing in time and may grow only if the energy release rate equals a material parameter measuring the toughness of the glue between the film and the substrate.
There is a strong coupling between the flow rule and the wave equation, since the energy release rate implicitly depends on the displacement and its derivatives.

In this work, we limit ourselves to considering solutions where the bonded region is a disk. (By our first results, this implies that the displacement is a radial function, provided the initial data are sufficiently regular; see Remark \ref{rmk:radial-disp}.)
This ansatz allows us to simplify the setting: indeed, since we know the shape of the debonded region, which is described by a single parameter, we can explicitly compute the energy release rate.
Specifically, using polar coordinates we can pass to a one-dimensional problem, where the wave equation contains some damping terms similar to that considered in \cite{RiNa2020}, weighted with a different kernel.
By means of a further nontrivial change of variables (see Section \ref{sec:prescribed}),
we can recast the problem in a form that is suitable for the methods developed in \cite{DMLazNar16,LazNar-init,RiNa2020} based on fixed point theorems and representation formulas for the solutions of the wave equation.
Moreover, following \cite{RiNa2020} it is possible to include a damping term accounting for friction produced by air resistance (i.e., the term with coefficient $\alpha$ in \eqref{princeq} below).
It is also possible to account for toughness discontinuities.
Because of our ansatz, uniqueness holds only among those configurations with a radial debonding front.

The scope of our work is to give a general definition of energy release rate and Griffith's criterion for debonding models in dimension two. The well posedness of the problem is here tested in the special case of radial solutions, which simplifies the setting as hinted above. 
To some extent, the ansatz of radial solutions is comparable to the restriction to prescribed crack paths in fracture mechanics. 
However, in our work the definition of energy release rate is formulated without a priori assumptions on the solutions, so Griffith's criterion can be stated for a very wide class of possible debonding fronts (see Definition \ref{defenergyreleaserate} and Proposition \ref{prop:ERR-general}). 
We thus believe that our results represent a first step towards the understanding of more general models in dimension two, where the circular symmetry is possibly broken.

\section{The problem with prescribed debonding front} \label{sec1}

We consider a flexible, inextensible, thin film, initially attached to a planar substrate parametrized in the reference configuration on the $(x_1,x_2)$-plane and progressively peeled off from the substrate. In this section we assume that the evolution of the debonded region and of the debonded front is prescribed on a time interval $[0,T]$; in Section \ref{sec:coupled} we will remove such prescription.

 \begin{figure} 
 \centering
\subfloat[]{
 \psfrag{r}{\hspace{-1em} $\rho(t)$}
 \psfrag{R}{\hspace{-.5em} $R$}
 \psfrag{Rr}{\hspace{-.5em} $R{-}\rho(t)$}
 \includegraphics[width=.35\textwidth]{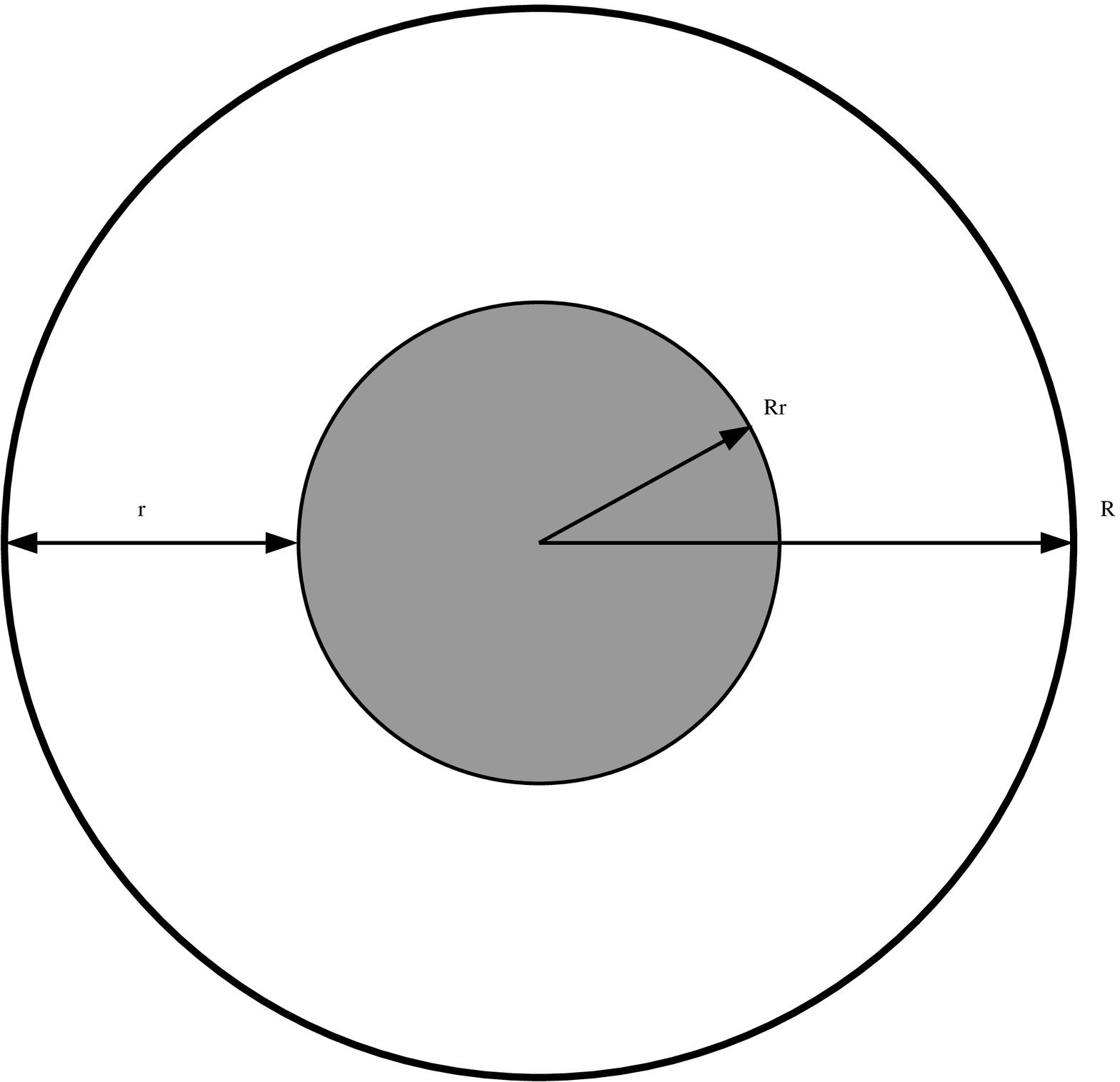}
}
\hspace{.1\textwidth}
\subfloat[]{
 \psfrag{w}{\hspace{-1em} $w(t)$}
 \includegraphics[width=.5\textwidth]{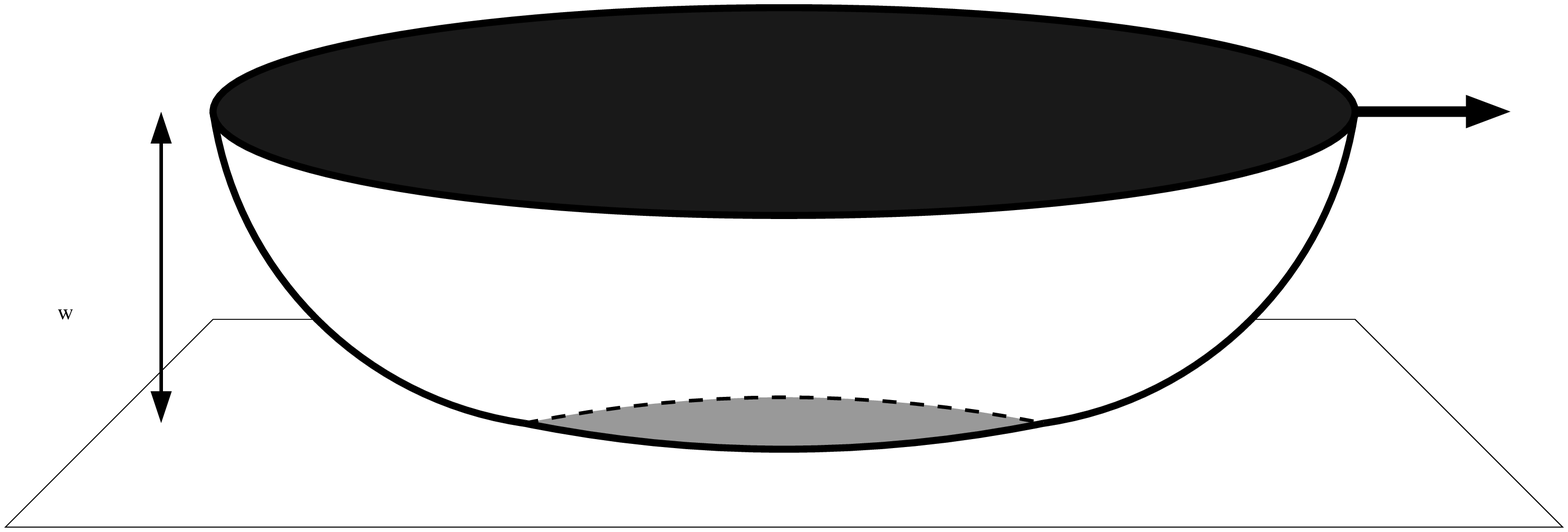}
}
 \caption{{\sc (a)} Reference configuration for a circular film peeled off from a substrate. {\sc (b)} Deformed configuration displaying the tension and the opening displacement exerted on the edge of the film.}
 \label{fig:reference}
 \end{figure}

We assume that the debonded region is parametrized on a growing annulus (Figure \ref{fig:reference}), whose width is given by a function 
$\rho: [0, T] \to [\rho_0, R)$, where
$R>\rho_0>0$ are fixed. We assume 
\begin{equation}\label{hprho}
\rho \in C^{0,1}([0,T] ; [\rho_0,R) ), \ 
\rho(0)=\rho_0 \mbox{ and } 0 \leq \dot{\rho}(t) < 1  \mbox{ for a.e. } t \in [0,T] .
\end{equation}
For $r < R$, we define $\mathcal{C}_{r,R} := \{ x \in \R^2 \, | \, r < |x| < R \}$ and 
\begin{equation}\label{def:domain-time}
 \mathcal{O}_\rho :=  \{ (t,x) \in [0,+\infty) \times \R^2 \,|\, 0 < t < T, \ x \in \mathcal{C}_{R-\rho(t),R}\}.
\end{equation}
At time $t$, the debonded region is parametrized in the reference configuration on $\mathcal{C}_{R-\rho(t),R}$.

Moreover, we fix a coefficient $\alpha\ge0$ which governs a term related to the friction produced by air resistance on the vibrating film. 
Thus, for $t \in [0, T]$ and $x=(x_1,x_2) \in \mathcal{C}_{R-\rho(t),R}$, the transverse component $u(t,x)$ of the displacement satisfies the following damped wave equation, complemented by initial and boundary conditions:
\begin{equation}\label{princeq}
\begin{dcases}
u_{tt}(t,x) - \Delta_x u(t,x) + \alpha \, u_t(t,x) = 0  \qquad & t \in (0,T), \, R-\rho(t) < |x| < R,
\\
u(t,x) = w(t) \qquad &  t \in (0,T), \, |x|= R,
\\
u(t,x) = 0 \qquad &  t \in (0,T), \, |x|= R-\rho(t) ,
\\
u(0,x) = u_0(x) \qquad &  R-\rho_0 < |x| < R,
\\
u_t(0,x) = u_1(x) \qquad &  R-\rho_0 < |x| < R.
\end{dcases}
\end{equation}
We look for solutions $u \in W^{1,2}(\mathcal{O}_\rho)$. 

We remark that the boundary condition $u(t,x)=0$ on $\{|x|=R-\rho(t)\}$ models the fact that the film is still bonded to the substrate on that circle (and thus, on the whole disk $\{|x|\le R-\rho(t)\}$).
On the other hand, the boundary condition $u(t,x)=w(t)$ on $\{|x|=R\}$ models the fact that the film is peeled off from the substrate through a tension exerted on its edge and an opening displacement $w(t)$, a given function of time corresponding to a time-dependent load. For simplicity we assume that no volume force is present.
The initial conditions on the transverse displacement and its velocity are given by two functions of space $u_0(x)$ and $u_1(x)$, respectively, as usual for the wave equation. 
The requirement that $\dot{\rho}(t) < 1 $ in \eqref{hprho} corresponds to the physical requirement that the debonding speed is subsonic, i.e., less than the wave speed. 

We will seek for radial solutions of problem \eqref{princeq}, i.e., such that there exists $U: [0,T] \times (0,R) \to \R$ with $u(t,x) = U(t,r)$ for every $t \in [0,T]$ and $|x| = r$.  Hence we define the following spaces: 
\begin{align*}
&L^2_{\text{rad}}(\mathcal{C}_{r,R}) :=
\{ u \in L^2(\mathcal{C}_{r,R}) \, | \, \exists \,\tilde{u}: (r,R) \to \R \mbox{ such that } u(x) = \tilde{u}(|x|) \mbox{ for a.e. } x \in \mathcal{C}_{r,R} \},
\\
&W^{1,2}_{\text{rad}}(\mathcal{C}_{r,R}) := \{ u \in W^{1,2}(\mathcal{C}_{r,R}) \, | \, \exists \,\tilde{u}: (r,R) \to \R \mbox{ such that } u(x) = \tilde{u}(|x|) \mbox{ for a.e. } x \in \mathcal{C}_{r,R} \},
\\
&W^{1,2}_{\text{rad}}(B(0,R)):= \{ u \in W^{1,2}(B(0,R)) \, | \, \exists \,\tilde{u}: (0,R) \to \R \mbox{ such that } u(x) = \tilde{u}(|x|) \mbox{ for a.e. } x \in B(0,R) \}.
\end{align*}
We will assume that 
\begin{equation}\label{uinitialcondition}
w \in W^{1,2}(0,T), \quad u_0 \in W^{1,2}_{\text{rad}}(\mathcal{C}_{R-\rho_0,R}), \quad u_1 \in L^2_{\text{rad}}(\mathcal{C}_{R-\rho_0,R}),
\end{equation}
with the compatibility conditions
\begin{equation}\label{uinitialconditioncomp}
u_0(R) = w(0), \quad u_0(R-\rho_0)=0.
\end{equation}
We are now in a position to state the notion of solution to \eqref{princeq}. To this end, a standard argument allows one to make precise the initial condition on the velocity: given a solution $u \in W^{1,2}(\mathcal{O}_\rho)$, we consider the restriction of $u(t)$ to $\mathcal{C}_{\rho_0,R}$; since $u_t,u_x\in L^2((0,T);L^2(\mathcal{C}_{\rho_0,R}))$, then $u_{tt}=\Delta_x u(t,x) - \alpha \, u_t(t,x)\in L^2((0,T);W^{-1,2}(\mathcal{C}_{\rho_0,R}))$,
hence $u_t\in W^{1,2}((0,T);W^{-1,2}(\mathcal{C}_{\rho_0,R}))\subset C^0([0,T];W^{-1,2}(\mathcal{C}_{\rho_0,R}))$. 

\begin{defin}\label{def:sol}
	We say that a function $u \in W^{1,2}(\mathcal{O}_\rho)$ is a solution of \eqref{princeq} if 
	\begin{enumerate}
		\item $u_{tt}(t,x) - \Delta_x u(t,x) + \alpha \, u_t(t,x) = 0 $ holds in the sense of distributions in $\mathcal{O}_\rho$,
		\item the boundary conditions are satisfied in the trace sense,
		\item  the initial conditions are satisfied in the sense of $L^2(\mathcal{C}_{R-\rho_0,R})$ and $W^{-1,2}(\mathcal{C}_{R-\rho_0,R})$ respectively.
	\end{enumerate}
We say that $u$ is a radial solution if in addition $u(t,\cdot)\in W^{1,2}_{\text{rad}}(\mathcal{C}_{R-\rho(t),R})$ for a.e.\ $t\in[0,T]$. 
\end{defin}

It will be more convenient for our analysis to introduce the following transformation:
\begin{equation}\label{defv}
 v(t,r) = u(t,x)  \quad \mbox{ for } t \in (0,T), \,|x|=R-r,
\end{equation}
where the initial conditions are defined in an analogous way:
\begin{equation} \label{eqinitialdatav}
v_0(r) = u_0(x), \quad v_1(r) = u_1(x) \quad \mbox{ with } |x|=R-r. 
\end{equation}
Then, passing to polar coordinates, we get that $u$ solves \eqref{princeq} if and only if $v$ solves the following system:
\begin{equation}\label{princeq2}
\begin{dcases}
v_{tt}(t,r) - v_{rr}(t,r) + \frac{1}{R-r} \, v_r(t,r) + \alpha \, v_t(t,r) = 0  \qquad & t \in (0,T), \, 0 < r < \rho(t),
\\
v(t,0) = w(t) \qquad &  t \in (0,T),
\\
v(t,\rho(t)) = 0 \qquad &  t \in (0,T),
\\
v(0,r) = v_0(r) \qquad &  0 < r < \rho_0,
\\
v_t(0,r) = v_1(r) \qquad &  0 < r < \rho_0.
\end{dcases}
\end{equation}
We notice that if $w,u_0$ and $u_1$ satisfy  \eqref{uinitialcondition} and \eqref{uinitialconditioncomp}, then
\begin{equation}\label{vinitialcondition}
w \in W^{1,2}(0,T), \quad v_0 \in  W^{1,2}(0,\rho_0), \quad v_1 \in L^2(0, \rho_0),  
\end{equation}
and the following compatibility conditions hold:
\begin{equation}\label{vinitialconditioncomp}
v_0(0) = w(0), \quad v_0(\rho_0) = 0.
\end{equation}

\subsection{Equivalent reformulation} \label{sec:prescribed}
We now introduce the function
\begin{equation}\label{defh}
h(t,r) := (R-r)^{\frac{1}{2}} \, e^{\frac{\alpha}{2}t} \,v(t,r) \qquad t \in (0,T), \, 0<r<R .
\end{equation}
Simple computations give the following relations:
\begin{equation*}\label{eqderivativev}
\begin{split}
v_t(t,r) &=  (R-r)^{-\frac{1}{2}} \, e^{-\frac{\alpha}{2}t}\left(h_t(t,r) -\frac{\alpha}{2} \,h(t,r) \right),
\\
v_r(t,r) &=  (R-r) ^{-\frac{1}{2}} \, e^{-\frac{\alpha}{2}t}\left( h_r(t,r) + \frac{1}{2} (R-r)^{-1}\,h(t,r)\right),
\\
v_{tt}(t,r) &=  (R-r)^{-\frac{1}{2}} \, e^{-\frac{\alpha}{2}t}\left(
h_{tt}(t,r) - \alpha \,h_t(t,r) + \frac{\alpha^2}{4} \,h(t,r)  \right),
\\
v_{rr}(t,r) &= (R-r)^{-\frac{1}{2}} \, e^{-\frac{\alpha}{2}t}\left(
h_{rr}(t,r) + (R-r)^{-1} \,h_r(t,r) + \frac{3}{4} (R-r)^{-2} \,h(t,r)\right).
\end{split}
\end{equation*}
Then from \eqref{princeq2} we get the auxiliary problem
\begin{equation}\label{princeqh}
\begin{dcases}
h_{tt}(t,r) - h_{rr}(t,r) - \frac{1}{4} \left( \alpha^2 + \frac{1}{\left(R-r\right)^2}\right) \, h(t,r) = 0  \qquad & t \in (0,T), \, 0 < r < \rho(t),
\\
h(t,0) = z(t) \qquad &  t \in (0,T),
\\
h(t,\rho(t)) = 0 \qquad &  t \in (0,T),
\\
h(0,r) = h_0(r) \qquad &  0 < r < \rho_0,
\\
h_t(0,r) = h_1(r) \qquad &  0 < r < \rho_0,
\end{dcases}
\end{equation}
where the boundary conditions and the initial data are given by
\begin{equation}\label{eqinitialdatah}
\begin{dcases}
z(t) := R^{\frac{1}{2}} \, e^{\alpha\frac{t}{2}} \, w(t),
\\
h_0 (r) := (R-r)^{\frac{1}{2}} \, v_0(r),
\\
h_1 (r) := (R-r)^{\frac{1}{2}} \, \left( v_1(r) +\frac{\alpha}{2} v_0(r)\right).
\end{dcases}
\end{equation}
We mention that if $w,v_0$ and $v_1$ satisfy \eqref{vinitialcondition} and \eqref{vinitialconditioncomp}, then $z,h_0$ and $h_1$ satisfy
\begin{equation}\label{hinitialcondition}
z \in W^{1,2}(0,T), \quad h_0 \in W^{1,2}(0,\rho_0), \quad h_1 \in L^2(0, \rho_0),  
\end{equation}
with the compatibility conditions
\begin{equation}\label{hcompatcondition}
h_0(0) = z(0), \quad h_0(\rho_0) = 0.
\end{equation}
Moreover $u$ is a solution of problem \eqref{princeq} if and only if $h$ is a solution of \eqref{princeqh}.

In \cite{RiNa2020} the authors studied the equations
\begin{equation*} 
h_{tt} - h_{xx} + \alpha \, h_t=0
\quad \text{and} \quad
h_{tt} - h_{xx} - \frac{\alpha^2}4 \, h=0 .
\end{equation*}
When dealing with \eqref{princeqh}, which features a nonconstant kernel multiplying a first derivative, we will follow an approach similar to the one of \cite{RiNa2020},
based on representation formulas for the damped one-dimensional wave equation.
Thus, as in \cite{DMLazNar16} we introduce two functions defined for $t \in [0,T]$, 
\begin{equation*}
\phi(t):= t-\rho(t) \mbox{ and } \psi(t):= t+\rho(t) .
\end{equation*}
Since $\psi$ is strictly increasing, we can then define
\begin{equation*} 
\omega:=[0,T{+}\rho(T)] \to [-\rho_0,T{-}\rho(T)], \quad \omega(t):=\begin{dcases}\phi\circ\psi^{(-1)}(t) & \ \text{if}\ t\ge\rho_0, \\ -\rho_0 & \ \text{if}\  t<\rho_0, \end{dcases} 
\end{equation*}
and we notice that $\omega$ is a Lipschitz function whose derivative satisfies for a.e. $t \in [0,T]$
\begin{equation*}
0 \leq \dot{\omega}(t) = \frac{1-\dot{\rho}(\psi^{(-1)}(t))}{1+\dot{\rho}(\psi^{(-1)}(t))} \leq 1.
\end{equation*}
We introduce the sets
\begin{equation} \label{def:Omega}
\begin{aligned}
&\Omega:= \{ (t,r) \in (0,T)\times(0,R) \,|\, 0 < r < \rho(t) \},
\\
&\Omega'_1:=  \{ (t,r) \in \Omega \,|\, t \leq r \mbox{ and } t+r \leq \rho_0 \},
\\
&\Omega'_2:=  \{ (t,r) \in \Omega \,|\, t > r \mbox{ and } t+r < \rho_0 \},
\\
&\Omega'_3:=  \{ (t,r) \in \Omega \,|\, t < r \mbox{ and } t+r > \rho_0 \},
\\
&\Omega':= \Omega'_1 \cup \Omega'_2 \cup \Omega'_3.
\end{aligned}
\end{equation}
Moreover we define the dependence cone of the point $(t,r)$, given by 
 \begin{figure}[b] 
 \centering
{\tiny
\subfloat[]{
 \psfrag{r}{$r$}
 \psfrag{t}{$t$}
 \psfrag{L}{$\rho$}
 \psfrag{0}{\hspace{.5em}$\rho_0$}
 \psfrag{1}{$(t,r)$}
 \psfrag{2}{\hspace{-.5em}$r{-}t$}
 \psfrag{3}{\hspace{-.5em}$t{+}r$}
 \includegraphics[width=.3\textwidth]{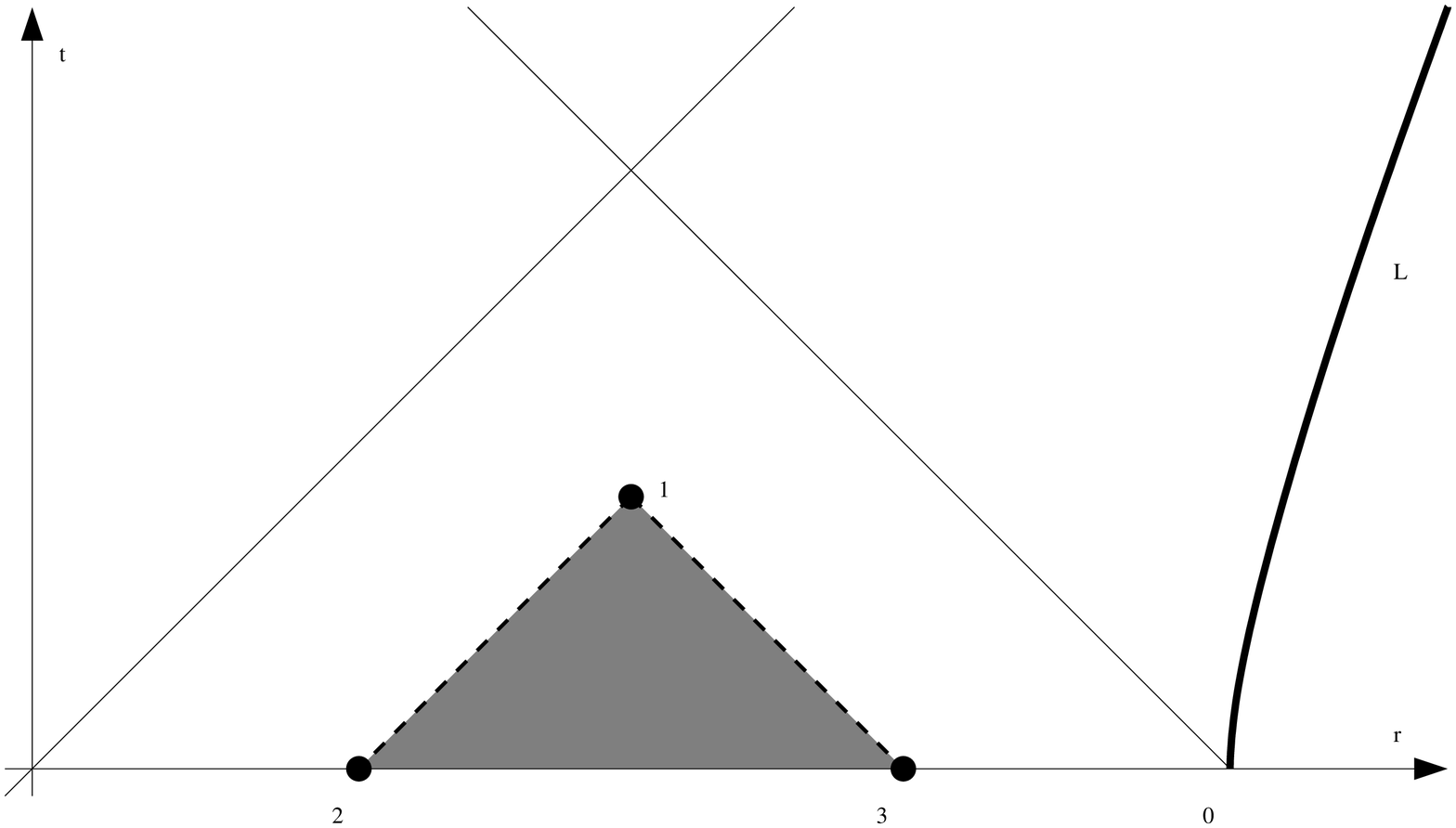}
}
\hspace{.02\textwidth}
\subfloat[]{
 \psfrag{r}{$r$}
 \psfrag{t}{$t$}
 \psfrag{L}{$\rho$}
 \psfrag{0}{\hspace{.5em}$\rho_0$}
 \psfrag{3}{\hspace{-.5em}$t{+}r$}
 \psfrag{4}{$(t,r)$}
 \psfrag{5}{$t{-}r$}
 \psfrag{6}{\hspace{-.65em} $t{-}r$}
 \includegraphics[width=.3\textwidth]{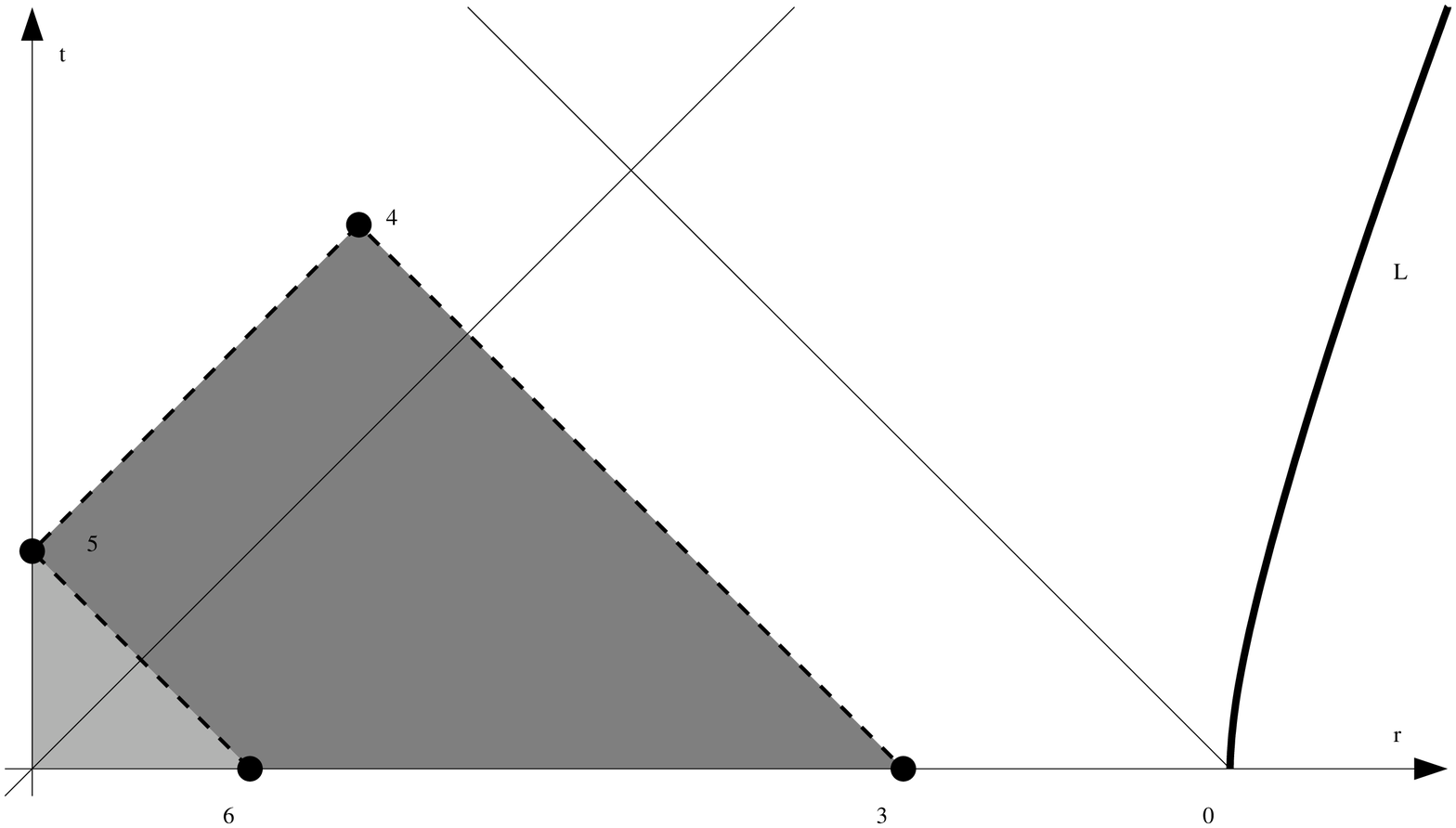}
}
\hspace{.02\textwidth}
\subfloat[]{
 \psfrag{r}{$r$}
 \psfrag{t}{$t$}
 \psfrag{L}{$\rho$}
 \psfrag{p}{$\psi^{-1}(t{+}r)$}
 \psfrag{0}{\hspace{.5em}$\rho_0$}
 \psfrag{2}{\hspace{-.5em}$r{-}t$}
 \psfrag{7}{$(t,r)$}
 \psfrag{9}{\hspace{-1.5em}$|\omega(t{+}r)|$}
 \includegraphics[width=.3\textwidth]{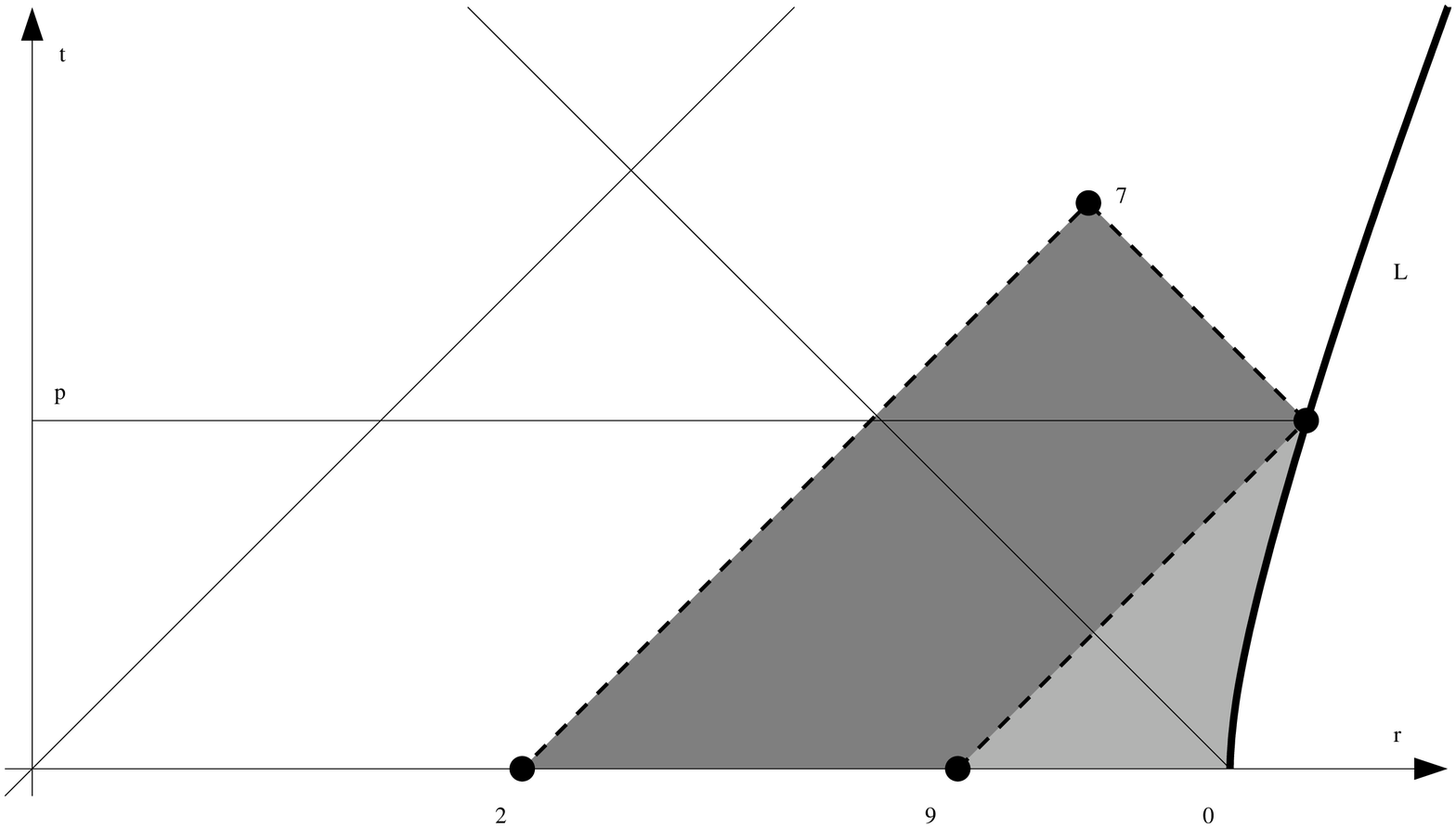}
}
}
 \caption{Sets appearing in formula \eqref{propformulah} in the three cases 
$(t,r)\in\Omega'_1$ ({\sc a}), $(t,r)\in\Omega'_2$ ({\sc b}), $(t,r)\in\Omega'_3$ ({\sc c}). 
For convenience the $t$-axis is vertical.
In each case the region in dark gray is the set $P(t,r)$, while the dependence cone $C(t,r)$ is the union of the regions in dark and light gray.}
 \label{fig:cone}
 \end{figure}
\begin{equation}\label{C(t,r)}
	C(t,r) := \{(\tau,\sigma) \in \Omega \,|\, 0\leq \tau\leq t \mbox{ and } r-t+\tau \leq \sigma \leq r+t-\tau \}.
\end{equation}
It will turn useful to define the set $P(t,r)$ as 
\begin{equation}\label{R(t,r)}
	P(t,r) := 
	\begin{cases}
		C(t,r) &\mbox{ if } t<r \mbox{ and } t+r < \rho_0,
		\\
		C(t,r) \setminus C(t-r,0) &\mbox{ if } t>r \mbox{ and } t+r < \rho_0,
		\\
		C(t,r) \setminus C(\psi^{-1}(t{+}r),\rho(\psi^{-1}(t{+}r))) &\mbox{ if } t<r \mbox{ and } t+r > \rho_0.
	\end{cases}
\end{equation}
See Figure \ref{fig:cone}.

 Consider now the undamped wave equation in the time-dependent interval $(0,\rho(t))$, complemented with initial and boundary conditions as in \eqref{princeqh}, 
\begin{equation} \label{eq:undamp}
\begin{dcases}
\h_{tt}(t,r) - \h_{rr}(t,r) = 0  \qquad & t \in (0,T), \, 0 < r < \rho(t),
\\
\h(t,0) = z(t) \qquad &  t \in (0,T),
\\
\h(t,\rho(t)) = 0 \qquad &  t \in (0,T),
\\
\h(0,r) = h_0(r) \qquad &  0 < r < \rho_0,
\\
\h_t(0,r) = h_1(r) \qquad &  0 < r < \rho_0.
\end{dcases}
\end{equation}
 In \cite{DMLazNar16} it has been shown that this problem has a unique solution $\h$, which satisfies the following d'Alembert formula in $\Omega'$: 
\begin{equation}\label{A(t,r)}
\h(t,r)=
\begin{dcases}
\frac{1}{2} h_0(r{-}t) +\frac{1}{2} h_0(r{+}t) + \frac{1}{2} \int_{r-t}^{r+t} h_1(s) \,ds &\mbox{ if }(t,r) \in\Omega'_1,
\\
z(t{-}r) -\frac{1}{2} h_0(t{-}r) +\frac{1}{2} h_0(r{+}t) + \frac{1}{2} \int_{t-r}^{t+r} h_1(s) \,ds &\mbox{ if }(t,r) \in\Omega'_2,
\\
\frac{1}{2} h_0(r{-}t) - \frac{1}{2} h_0(-\omega(r{+}t)) + \frac{1}{2} \int_{r-t}^{-\omega(r+t)} h_1(s) \,ds &\mbox{ if }(t,r) \in\Omega'_3.
\end{dcases}
\end{equation}
Indeed, according to the principle of causality, $\h$ depends on the data interior to the  dependence cone.

By a standard computation (which we detail for the reader's convenience), we may derive a law holding for any solution of problem \eqref{princeqh}.

\begin{prop}\label{propformulah}
	A function $h \in W^{1,2}(\Omega')$ is a solution of problem \eqref{princeqh} in $\Omega'$ if and only if 
	\begin{equation}\label{formulah}
	h(t,r) = \h(t,r) + \frac{1}{2} \iint_{P(t,r)} \frac{1}{4} \left( \alpha^2 + \frac{1}{\left(R-\sigma\right)^2}\right) \, h(\tau , \sigma) \,d\sigma \,d\tau  \quad \mbox{ for a.e.}\,(t,r) \in \Omega' ,
	\end{equation}
	where $P(t,r)$ and $\h$ are given by \eqref{R(t,r)} and \eqref{A(t,r)}, respectively.
\end{prop}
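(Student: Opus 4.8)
The idea is to use Duhamel's principle for the one–dimensional wave operator $\partial_{tt}-\partial_{rr}$ on the moving interval $(0,\rho(t))$, treating the zeroth–order term of \eqref{princeqh} as a source. Set
\[
g(\tau,\sigma):=\frac14\Bigl(\alpha^2+\frac1{(R-\sigma)^2}\Bigr)h(\tau,\sigma),
\]
which belongs to $L^2(\Omega')$ since $h\in W^{1,2}(\Omega')\subset L^2(\Omega')$ and the kernel is bounded on $\overline\Omega$ (here we use $\rho(t)<R$, so $R-\sigma$ stays away from $0$ on $\Omega$). By the linearity of the problem, $h$ is a solution of \eqref{princeqh} in $\Omega'$ if and only if $h-\h$ is the (distributional) solution of $\partial_{tt}-\partial_{rr}=g$ on $\Omega$ with homogeneous initial data on $(0,\rho_0)$ and homogeneous boundary data on $\{r=0\}$ and on $\{r=\rho(t)\}$, where $\h$ is the solution of \eqref{eq:undamp} carrying all the initial/boundary data of \eqref{princeqh}, given explicitly by \eqref{A(t,r)} in $\Omega'$. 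Hence both implications of the proposition reduce to the single fact that this solution of the inhomogeneous undamped problem equals the Duhamel integral $\tfrac12\iint_{P(t,r)}g$, with $P(t,r)$ as in \eqref{R(t,r)}; low regularity is handled by reading the PDE and the derivatives of the integral term in the sense of distributions, the manipulations being justified by the $L^1_{\mathrm{loc}}$ bound on $g$ and Fubini's theorem.

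The computation is most transparent in characteristic coordinates. Passing to $a=\tau-\sigma$, $b=\tau+\sigma$ turns the dependence cone $C(t,r)$ of \eqref{C(t,r)} into a triangle with axis-parallel legs and turns $\partial_{tt}-\partial_{rr}$ into $-4\,\partial_a\partial_b$; in these variables the classical identity $(\partial_{tt}-\partial_{rr})\bigl(\tfrac12\iint_{C(t,r)}g\bigr)=g(t,r)$ is immediate, with no boundary contribution as long as the triangle stays inside $\Omega$. This disposes of the case $(t,r)\in\Omega'_1$, where $P(t,r)=C(t,r)$ by \eqref{R(t,r)}, and shows at the same time that the term $\tfrac12\iint_{P(t,r)}g$ and its $t$-derivative vanish at $t=0$ because $P(0,r)$ is degenerate; thus $h$ and $\h$ share the initial data there.

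For $(t,r)\in\Omega'_2$ the cone crosses $\{r=0\}$ and one must account for the boundary datum $\h(t,0)=z(t)$: subtracting the reflected cone $C(t-r,0)$, i.e.\ working with $P(t,r)=C(t,r)\setminus C(t-r,0)$, removes exactly the part of the domain producing a spurious term, and what remains reproduces the $z$-contribution in \eqref{A(t,r)}. Similarly, for $(t,r)\in\Omega'_3$ the cone crosses the moving boundary $\{\sigma=\rho(\tau)\}$ and one uses $\h(t,\rho(t))=0$; the definitions $\phi(t)=t-\rho(t)$, $\psi(t)=t+\rho(t)$ and of $\omega$ are tailored so that the left-going characteristic through $(t,r)$ meets the boundary curve at the point of parameter $\psi^{-1}(t+r)$, whence the correct reflected cone is $C(\psi^{-1}(t{+}r),\rho(\psi^{-1}(t{+}r)))$ as in \eqref{R(t,r)}. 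In both cases, differentiating $\iint_{P(t,r)}g$ twice generates, besides $g(t,r)$, integrals over the moving sides of $\partial P(t,r)$; the key point is that the flux through the segment lying on $\{r=0\}$ (resp.\ on the curve $\{\sigma=\rho(\tau)\}$) is exactly cancelled by the one coming from the subtracted cone, using $\dot\rho<1$ and $0\le\dot\omega\le1$ to ensure that the relevant curves are graphs of Lipschitz functions and that the characteristic/boundary intersections are transverse. Finally, as $r\to0^+$ (resp.\ $r\to\rho(t)^-$) one has $|P(t,r)|\to0$, so $\tfrac12\iint_{P(t,r)}g$ vanishes on both parts of the parabolic boundary, and $h$ inherits the boundary conditions of $\h$. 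Combining these facts closes both directions.

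The main obstacle is precisely steps two and three above: differentiating twice the area integral over the $(t,r)$-dependent region $P(t,r)$ and checking that every boundary term produced by its moving sides cancels, leaving only $g(t,r)$. This is classical in spirit but bookkeeping-heavy, and it is where the explicit forms of $\phi$, $\psi$, $\omega$ and of the reflected-cone subtractions in \eqref{R(t,r)} are used in an essential way; carrying it out in characteristic coordinates—where $P(t,r)$ is a difference of triangles with sides that are either axis-parallel or graphs with slope in $[0,1]$—keeps the argument manageable (cf.\ Figure \ref{fig:cone}).
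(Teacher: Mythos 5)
Your overall strategy is sound and its computational core (characteristic coordinates, the regions $P(t,r)$ of \eqref{R(t,r)}, the role of $\omega$ in the reflection at the moving boundary) coincides with what the paper does; the difference is in the logical organization. The paper does not argue via Duhamel plus well-posedness: in the forward direction it integrates the transformed identity $\partial_{\xi\eta}\tilde h=\widetilde H$ \emph{for the given solution $h$ itself} twice over $\widetilde P(\overline\xi,\overline\eta)$, so the representation \eqref{formulah}, including the $z$- and $\omega$-terms of \eqref{A(t,r)}, drops out of the fundamental theorem of calculus and the boundary/initial conditions satisfied by $h$; and in the converse direction the only thing to check is $\partial^2_{\xi\eta}\iint_{\widetilde P}\widetilde H=\widetilde H$, which is a two-line computation because in the $(\xi,\eta)$-variables the only moving limit of integration is $\omega(\overline\eta)$, whose contribution is independent of $\overline\xi$ and disappears after the $\overline\xi$-derivative. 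So the ``bookkeeping-heavy'' step you defer is, in the right coordinates, genuinely short, and your picture of cancelling fluxes is slightly off in case $\Omega_2'$: the boundary of $P(t,r)=C(t,r)\setminus C(t{-}r,0)$ meets $\{r=0\}$ only at one point, and the mechanism is the classical odd-reflection identity rather than a cancellation of boundary segments (also, the $z$-contribution is carried by $\h$, not by the Duhamel term, which must have zero trace at $r=0$).

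The one step in your write-up that is a real gap as stated is the identification, in the forward direction, of $h-\h$ with the Duhamel integral: calling $h-\h$ ``the'' solution of the inhomogeneous zero-data problem presupposes uniqueness for the undamped wave equation with zero data in the class $W^{1,2}(\Omega')$, for functions solving the equation only on $\Omega'$ (note $h$ is not assumed on all of $\Omega$, so ``on $\Omega$'' is already inaccurate). The uniqueness proved in \cite{DMLazNar16} concerns the problem on the full time-dependent domain and does not literally cover this truncated setting; you would need a finite-propagation-speed/energy argument on characteristic trapezoids, or to observe that $\Omega'$ is a union of domains of determinacy. This is fillable but must be supplied; the paper's direct double-integration argument is precisely designed to avoid invoking any such uniqueness, using only the existence and formula for $\h$.
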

\begin{proof}
	Let $h \in W^{1,2}(\Omega')$ be a solution of problem \eqref{princeqh} and consider the following change of variables,
	\begin{equation}\label{transaffine}
	\begin{cases}
	\xi = t-r,
	\\
	\eta = t+r.
	\end{cases}
	\end{equation}
We remark {\it en passant} that the line $t=0$ corresponds to $\xi+\eta=0$.
	Then the function $\tilde{h}(\xi,\eta) := h\left( \frac{\xi+\eta}{2}, \frac{\eta-\xi}{2} \right) $ satisfies 
	\begin{equation}\label{h_xi,eta = h}
	\partial_{\xi,\eta} \, \tilde{h}= \frac{1}{16} \left( \alpha^2 + \frac{1}{\left( R - \frac{\eta-\xi}{2}\right)^2}\right) \, \tilde{h}
	\end{equation}
	in the sense of distributions in $\Lambda'$, where $\Lambda'$ is the image of $\Omega'$ through the affine transformation \eqref{transaffine}. 
Let us denote by $\Lambda'_i$ the image of $\Omega'_i$ through the change of coordinates ($i=1,2,3$).
Now define the right-hand side of \eqref{h_xi,eta = h} by $\widetilde{H}$ and denote by $\widetilde{C}(\xi,\eta)$ and $\widetilde{P}(\xi,\eta)$  the sets given by \eqref{C(t,r)} and \eqref{R(t,r)}, respectively, in terms of the new coordinates. 
We fix a point $(\overline{t},\overline{r})\in\Omega'$, corresponding to $(\overline{\xi},\overline{\eta})\in\Lambda'$, and distinguish the three cases where such point lies in $\Lambda_1'$, $\Lambda_2'$, or $\Lambda_3'$. 
	
	{\bf (1) Case $(\overline{\xi},\overline{\eta}) \in \Lambda_1'$:} Observe that in this case one has $\overline{\xi}\le 0$ and  $\widetilde{P}(\overline{\xi},\overline{\eta})=\widetilde{C}(\overline{\xi},\overline{\eta})$ is defined by the conditions  $-\overline{\eta}\le \xi \le \overline{\xi}$ and $|\xi|\le \eta\le \overline{\eta}$, where we took into account that $\xi\le 0$ on the given domain (see Figure \ref{fig:proof}).	
	By  double integration one gets:	
	\begin{equation*}
	\tilde{h} (\overline{\xi},\overline{\eta}) = 
	\tilde{h}(-\overline{\eta},\overline{\eta}) + \int_{-\overline{\eta}}^{\overline{\xi}} \tilde{h}_\xi(\xi,|\xi|) \,d\xi +  \iint_{\widetilde{P}(\overline{\xi},\overline{\eta})} \widetilde{H}(\xi,\eta) \,d\xi\,d\eta.
	\end{equation*}
	Then, by a change of variables in the last integral from the domain of integration $\widetilde{P}(\overline{\xi},\overline{\eta})$ to $P(\overline{t},\overline{r})$, by observing that $\tilde{h}(-\overline{\eta},\overline{\eta}) = h_0(\overline{r} +\overline{t})$ and by computing 
	\begin{align*}
	\int_{-\overline{\eta}}^{\overline{\xi}} \tilde{h}_\xi(\xi,|\xi|) \,d\xi & = - \frac{1}{2} \int_{\overline{r}+\overline{t}}^{\overline{r} - \overline{t}} (h_t(0,r) - h_r(0,r) ) \,dr 
	\\&= \frac{1}{2} h_0(\overline{r} - \overline{t}) - \frac{1}{2} h_0(\overline{r}+\overline{t}) + \frac{1}{2} \int_{\overline{r} - \overline{t}}^{\overline{r}+\overline{t}} h_1(s) \,ds,
	\end{align*}
	one obtains that \eqref{formulah} holds in $\Omega'_1$ (see also Figure \ref{fig:cone}).

 \begin{figure} 
 \centering
 \psfrag{r}{$r$}
 \psfrag{t}{$t$}
 \psfrag{L}{$\rho$}
 \psfrag{x}{$\xi$}
 \psfrag{y}{$\eta$}
 \psfrag{0}{\hspace{.5em}$\rho_0$}
 \psfrag{1}{$(\overline\xi,\overline\eta)$}
 \psfrag{2}{\hspace{-.5em}$(\overline\xi,|\overline\xi|)$}
 \psfrag{3}{\hspace{-2em}$(-\overline\eta,\overline\eta)$}
 \psfrag{4}{$(\underline\xi,\overline\eta)$}
 \psfrag{5}{$(\underline\xi,\underline\xi)$}
 \psfrag{6}{\hspace{-1.5em} $(-\underline\xi,\underline\xi)$}
 \psfrag{7}{$(\overline\xi,\underline\eta)$}
 \psfrag{8}{$(\underline\omega,\underline\eta)$}
 \psfrag{9}{$(\underline\omega,|\underline\omega|)$}
 \includegraphics[width=.9\textwidth]{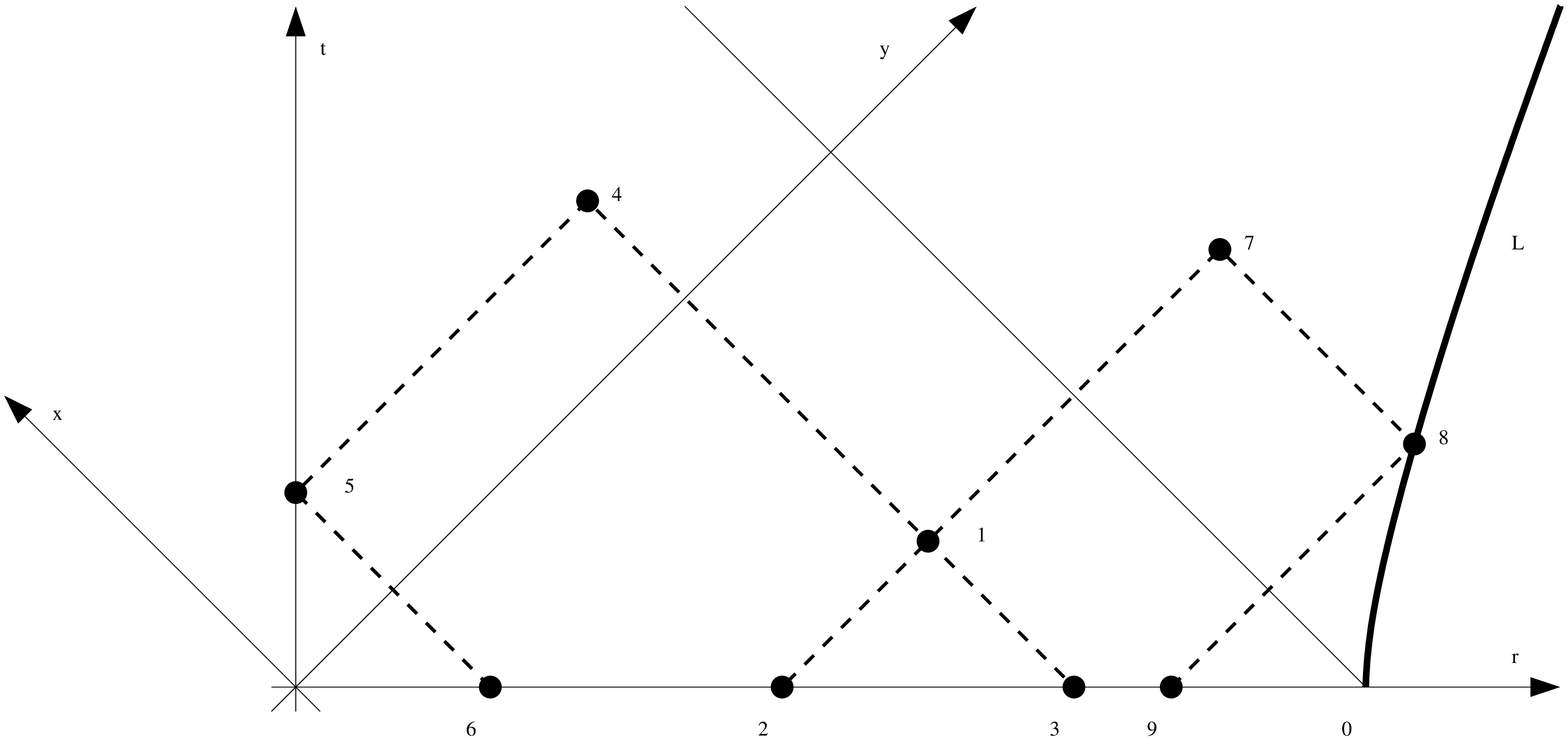}
 \caption{Some points appearing in the proof of Proposition \ref{propformulah}, represented in the $(t,r)$-plane (with $r$ on the horizontal axis) and marked using the $(\xi,\eta)$-coordinates. The bold curve is the graph of $t\mapsto\rho(t)$. In the picture we consider three points $(\overline\xi,\overline\eta)\in\Lambda'_1$, $(\underline\xi,\overline\eta)\in\Lambda'_2$, $(\overline\xi,\underline\eta)\in\Lambda'_3$;
notice that $\overline\xi<0<\underline\xi$. 
The other points, lying on the characteristic lines,  are the boundary points in the double integrations in the proof. 
Here we use the shorthand notation $\underline\omega:=\omega(\underline\eta)$.}
 \label{fig:proof}
 \end{figure}

	{\bf (2) Case $(\overline{\xi},\overline{\eta}) \in \Lambda_2'$:}  Observe that in this case  $\widetilde{P}(\overline{\xi},\overline{\eta})$ is defined by the conditions  $\overline{\xi}\le \eta\le \overline{\eta}$ and  $-\eta\le \xi \le \overline{\xi}$ (see Figure \ref{fig:proof}).
	By  double integration one gets:
	\begin{equation*}
	\tilde{h} (\overline{\xi},\overline{\eta}) = 
	\tilde{h}(\overline{\xi},\overline{\xi}) + \int_{\overline{\xi}}^{\overline{\eta} }\tilde{h}_\eta(-\eta,\eta) \,d\eta +  \iint_{\widetilde{P}(\overline{\xi},\overline{\eta})} \widetilde{H}(\xi,\eta) \,d\xi\,d\eta.
	\end{equation*}	
	Then, by changing the variables in the last integral from the domain of integration $\widetilde{P}(\overline{\xi},\overline{\eta})$ back to $P(\overline{t},\overline{r})$, by observing that
	$\tilde{h}(\overline{\xi},\overline{\xi}) = z(\overline{t} - \overline{r})$ and $\tilde{h}(-\overline{\eta},\overline{\eta}) = h_0( \overline{t} + \overline{r})$ and by computing 
\begin{align*}
	\int_{\overline{\xi}}^{\overline{\eta}} \tilde{h}_\eta(-\eta,\eta) \,d\eta& =  \frac{1}{2} \int_{\overline{t}-\overline{r}}^{\overline{t} + \overline{r}} (h_t(0,r) + h_r(0,r) ) \,dr 
	\\&= -\frac{1}{2} h_0(\overline{t} - \overline{r}) + \frac{1}{2} h_0(\overline{r}+\overline{t}) + \frac{1}{2} \int_{\overline{t} - \overline{r}}^{\overline{r}+\overline{t}} h_1(s) \,ds,
	\end{align*}
	one obtains that \eqref{formulah} holds in $\Omega'_2$  (see also Figure \ref{fig:cone}).
	
	{\bf (3) Case $(\overline{\xi},\overline{\eta}) \in \Lambda_3'$:} Observe that in this case one has $\overline{\xi}< 0$ and  $\widetilde{P}(\overline{\xi},\overline{\eta})$ is defined by the conditions  $\omega(\overline{\eta})\le \xi \le \overline{\xi}$ and $|\xi|\le \eta\le \overline{\eta}$, where we took into account that $\xi<0$ on the given domain (see Figure \ref{fig:proof}). 
	By  double integration one gets:
	\begin{equation*}
	\tilde{h} (\overline{\xi},\overline{\eta}) = 
	\tilde{h}(\omega(\overline{\eta}),\overline{\eta}) + \int_{\omega(\overline{\eta})}^{\overline{\xi}} \tilde{h}_\xi(\xi,|\xi|) \,d\xi +  \iint_{\widetilde{P}(\overline{\xi},\overline{\eta})} \widetilde{H}(\xi,\eta) \,d\xi\,d\eta.
	\end{equation*}
	Then, by a change of variables in the last integral from $\widetilde{P}(\overline{\xi},\overline{\eta})$ to $P(\overline{t},\overline{r})$, by observing that $\tilde{h}(\omega(\overline{\eta}),\overline{\eta})=0$ and by computing
	\begin{align*}
	\int_{\omega(\overline{\eta})}^{\overline{\xi}} \tilde{h}_\xi(\xi,|\xi|) \,d\xi &= - \frac{1}{2} \int_{-\omega(\overline{r}+\overline{t})}^{\overline{r}-\overline{t}} (h_t(0,r) - h_r(0,r) )(r) \,dr 
	\\&= \frac{1}{2} h_0(\overline{r}-\overline{t}) - \frac{1}{2} h_0(-\omega(\overline{r}+\overline{t})) + \frac{1}{2} \int_{\overline{r}-\overline{t}}^{-\omega(\overline{r}+\overline{t})} h_1(s) \,ds,
	\end{align*}
	one obtains that \eqref{formulah} holds in $\Omega'_3$ (see also Figure \ref{fig:cone}).
	
Finally, to prove the converse implication it is sufficient to show that
$J(\overline\xi,\overline\eta):=\iint_{\widetilde{P}(\overline{\xi},\overline{\eta})} \widetilde{H}(\xi,\eta) \,d\xi\,d\eta$ solves
$\partial^2_{\xi,\eta} J(\overline\xi,\overline\eta)=\widetilde H(\overline\xi,\overline\eta)$.
We detail the computation only in case {\bf (3)}, the others being similar. In this case, 
\[
J(\overline\xi,\overline\eta)= \int_{\omega(\overline{\eta})}^{\overline{\xi}} \underbrace{\int_{|\xi|}^{\overline\eta} \widetilde H(\xi,\eta) \,d\eta}_{=:j(\xi,\overline\eta)} \,d\xi .
\]
We then have $\partial_{\eta} J(\overline\xi,\overline\eta)=-j(\omega(\overline\eta),\overline\eta)\,\dot\omega(\overline\eta) + 
\int_{\omega(\overline{\eta})}^{\overline{\xi}} \widetilde H(\xi,\overline\eta) \,d\xi$.
By deriving with respect to $\overline\xi$ the conclusion follows immediately.
\end{proof}

Below we shall use the following formula for the solutions of problem \eqref{princeq2}.

\begin{rem}\label{remv}
	Let $v$ be a solution of problem \eqref{princeq2}. Let $\vi$ be the  solution of the pure wave equation  given by \eqref{eq:undamp} replacing the boundary data $h_0$, $h_1$, and $z$ by $v_0$, $v_1$, and $w$, respectively.  Then the very same argument of Proposition \ref{propformulah} shows that 
	\begin{equation}\label{formulav}
	v(t,r) = \vi(t,r) + \frac{1}{2} \iint_{P(t,r)} \left( -\frac{v_r(\tau , \sigma)}{\left(R{-}r\right)} - \alpha \, v_t(\tau , \sigma) \right) \,d\sigma \,d\tau  \quad \mbox{ for a.e.}\,(t,r) \in \Omega' .
	\end{equation}
\end{rem}

We summarize the regularity of the terms appearing in \eqref{formulah} in the following lemma. (For a detailed proof we refer to \cite[Lemmas 1.10 and 1.11]{RiNa2020}.)

\begin{lemma}\label{lemmaAandPhi}
	The following hold true.
	\begin{enumerate}
		\item[(i)] Let $\h$ be defined  as in \eqref{eq:undamp}. Then $\h \in C^0(\overline{\Omega}{}') \cap W^{1,2}(\Omega')$. Moreover, setting $\h \equiv 0$ outside $\overline{\Omega}$,
		\begin{equation*}
		\h \in C^0( [0,T] ; W^{1,2}(0,R)) \cap C^1( [0,T] ; L^2(0,R)).
		\end{equation*}
		
		\item[(ii)] Let $H \in L^2(\Omega')$ and for every $(t,r) \in\Omega'$ define
		\begin{equation}\label{defmathcalF}
		\Phi[H](t,r):=\iint_{P(t,r)} H(\tau,\sigma) \,d\tau d\sigma .
		\end{equation}
		Then $\Phi[H] \in C^0(\overline{\Omega}{}')\cap W^{1,2}(\Omega')$. Moreover, setting $\Phi[H]\equiv 0$ outside $\overline{\Omega}$,
		\begin{equation*}
		\Phi[H] \in C^0( [0,T] ; W^{1,2}(0,R)) \cap C^1( [0,T] ; L^2(0,R)).
		\end{equation*}
	\end{enumerate}
	
\end{lemma}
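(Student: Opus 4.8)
The plan is to read the claimed regularity directly off the explicit representations \eqref{A(t,r)} and \eqref{defmathcalF}: first in the region $\Omega'$, where these formulas hold, and then on all of $\Omega$ by iterating the reflection construction of \cite{DMLazNar16}. (The fully detailed computations are those of \cite[Lemmas 1.10 and 1.11]{RiNa2020}.) \emph{Part (i), in $\Omega'$.} First I would invoke the one--dimensional embedding $W^{1,2}\hookrightarrow C^0$, so that $h_0$ and $z$ are continuous and $s\mapsto\int_{a}^{s}h_1$ is absolutely continuous; hence each of the three branches of \eqref{A(t,r)} is continuous on the closure of the corresponding $\Omega'_i$. Next I would check that these branches glue into an element of $C^0(\overline{\Omega}{}')$: on $\{t=r\}$ the first two branches coincide because $h_0(0)=z(0)$ by \eqref{hcompatcondition}, and on $\{t+r=\rho_0\}$ the first and third coincide because $\psi^{-1}(\rho_0)=0$, $\omega(\rho_0)=-\rho_0$ and $h_0(\rho_0)=0$. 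For the Sobolev bound I would differentiate \eqref{A(t,r)}: $\h_t$ and $\h_r$ are linear combinations of $h_0'$, $h_1$ and $z'$ evaluated along the Lipschitz curves $r\pm t$, $t-r$, $-\omega(r+t)$, each multiplied by a bounded Jacobian — here the bound $0\le\dot\omega\le1$ makes the substitution $r\mapsto-\omega(r+t)$ admissible in $L^2$ — so a change of variables gives $\|\h\|_{W^{1,2}(\Omega')}\le C\big(\|h_0\|_{W^{1,2}(0,\rho_0)}+\|h_1\|_{L^2(0,\rho_0)}+\|z\|_{W^{1,2}(0,T)}\big)$ with $C$ depending only on $T$ and the Lipschitz constant of $\rho$.

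\emph{Part (i), globally.} The solution of \eqref{eq:undamp} on all of $\Omega$ is obtained from its $\Omega'$--representation by iterating the reflection procedure of \cite{DMLazNar16} across the characteristics issued from $\{r=0\}$ (where $z\in W^{1,2}$ is absorbed) and from $\{r=\rho(t)\}$ (where the homogeneous Dirichlet condition and $\dot\rho<1$ enter). Since $\rho$ is Lipschitz on the compact interval $[0,T]$, only finitely many reflections occur on $\Omega$, and each step preserves membership in $C^0\cap W^{1,2}$ with controlled constants, so patching yields $\h\in C^0(\overline{\Omega})\cap W^{1,2}(\Omega)$. As $\h=0$ on $\{r=\rho(t)\}$, the extension by $0$ to $r\in(\rho(t),R)$ is continuous. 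Finally, on each diamond $\h$ has the form $F(r+t)+G(r-t)$ with $F,G\in W^{1,2}$ of one real variable, so continuity of translations in $L^2$ gives that $t\mapsto\h(t,\cdot)$ and $t\mapsto\h_t(t,\cdot)$ are continuous into $W^{1,2}(0,R)$ and $L^2(0,R)$ respectively, and the difference quotients of $t\mapsto\h(t,\cdot)$ converge in $L^2(0,R)$ to $\h_t(t,\cdot)$; this is precisely $\h\in C^0([0,T];W^{1,2}(0,R))\cap C^1([0,T];L^2(0,R))$.

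\emph{Part (ii).} For continuity of $\Phi[H]$ on $\overline{\Omega}{}'$ I would use $|\Phi[H](t_1,r_1)-\Phi[H](t_2,r_2)|\le\|H\|_{L^2(\Omega')}\,\big|P(t_1,r_1)\triangle P(t_2,r_2)\big|^{1/2}$ and the fact that, since $\rho$, $\psi^{-1}$ and $\omega$ are Lipschitz, the area of the symmetric difference of the sets in \eqref{C(t,r)}--\eqref{R(t,r)} tends to $0$ as $(t_2,r_2)\to(t_1,r_1)$; the three branches of \eqref{R(t,r)} match on their interfaces because the subtracted cone degenerates to a point there ($C(t-r,0)$ for $t=r$, and $C(\psi^{-1}(t+r),\rho(\psi^{-1}(t+r)))$ for $t+r=\rho_0$, since $\psi(0)=\rho_0$). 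For $\Phi[H]\in W^{1,2}(\Omega')$ I would differentiate under the integral sign: writing $P(t,r)$ through its defining inequalities and using the Leibniz rule for a domain with moving boundary, $\partial_t\Phi[H]$ and $\partial_r\Phi[H]$ are finite sums of one--dimensional integrals of $H$ over the characteristic segments bounding $P(t,r)$, each parametrized with a Jacobian bounded by the subsonic condition (via $\dot\omega\le1$ and $(1+\dot\rho)^{-1}\le1$); Cauchy--Schwarz in the integration variable, then Fubini and a change of variables, bound $\|\partial_t\Phi[H]\|_{L^2(\Omega')}^2+\|\partial_r\Phi[H]\|_{L^2(\Omega')}^2$ by $C\|H\|_{L^2(\Omega')}^2$. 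The global claims follow exactly as in part (i): $\Phi[H]=0$ on $\{r=\rho(t)\}$ because $P(t,\rho(t))$ has zero measure, so the zero extension is continuous; continuity of $t\mapsto\Phi[H](t,\cdot)$ into $W^{1,2}(0,R)$, of $t\mapsto\partial_t\Phi[H](t,\cdot)$ into $L^2(0,R)$, and differentiability of the former, are obtained by applying the same Cauchy--Schwarz/Fubini estimates to $\Phi[H](t,\cdot)-\Phi[H](t',\cdot)$, the jump of $\partial_t\Phi[H]$ across the measure--zero set $\{r=\rho(t)\}$ being invisible to $L^2$ norms.

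\emph{Main obstacle.} The only substantial point is the moving--boundary differentiation in part (ii) together with the attendant change of variables: one must keep track of the characteristic pieces of $\partial P(t,r)$ and verify that every Jacobian produced while reducing to $\|H\|_{L^2}$ remains bounded, which is exactly where the subsonic hypothesis $\dot\rho<1$ from \eqref{hprho} is used. Everything else is bookkeeping over the three regions $\Omega'_1,\Omega'_2,\Omega'_3$ and the finitely many reflected copies.
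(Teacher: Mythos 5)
Your proposal is correct and takes essentially the same route as the proof the paper relies on: the paper gives no argument of its own for this lemma, deferring entirely to \cite[Lemmas 1.10 and 1.11]{RiNa2020}, and your sketch (explicit d'Alembert representation glued across $\{t=r\}$ and $\{t+r=\rho_0\}$ via the compatibility conditions \eqref{hcompatcondition}, Lipschitz dependence of $P(t,r)$ for continuity of $\Phi[H]$, and differentiation along the characteristic pieces of $\partial P(t,r)$ with Jacobians controlled by $0\le\dot\omega\le1$) is precisely the argument of that reference, whose derivative formulas the paper itself restates in Lemma \ref{lemmaAandPhiappedix}. No gap worth flagging.
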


\subsection{Existence of solutions for prescribed debonding front} 
In order to obtain existence and uniqueness of a solution to problem \eqref{princeqh}, 
we first seek a solution in a small time interval $[0,\widetilde{T}]\subset[0,T]$; afterwards we shall extend the solution to $[0,T]$. 
As a consequence, we have existence and uniqueness of a \emph{radial} solution to problem \eqref{princeq};
moreover, if $\rho$ is sufficiently regular, the results in Appendix \ref{app:uniq} show that such radial function is the unique solution to problem \eqref{princeq} in the sense of Definition \ref{def:sol}.
The solution to \eqref{princeqh} is found as a fixed point of a certain linear operator. More precisely, let
\begin{equation*}
\mathcal{Y} := \{ h \in C^0(\overline{\Omega}{}') \,|\, h \mbox{ satisfies the initial and boundary conditions in } \eqref{princeqh} \}
\end{equation*}
and consider 
\begin{equation*}
\mathcal{L}: \, \mathcal{Y} \to C^0(\overline{\Omega}{}'), \, h \mapsto \mathcal{L}[h] 
\end{equation*}
defined by
\begin{equation}\label{formulaL}
\mathcal{L}[h](t,r):= \h(t,r) + \frac{1}{2} \iint_{P(t,r)} \frac{1}{4}\left( \alpha^2 + \frac{1}{\left(R {-}\sigma\right)^2}\right) \, h(\tau , \sigma) \,d\sigma \,d\tau \quad \mbox{ for a.e.}\,(t,r) \in \Omega'.
\end{equation}
Then we have the following result, the proof of which is based on \cite[Proposition 1.13]{RiNa2020}; we detail the proof in order to show  how the kernel $\frac{1}{4}\left( \alpha^2 + \frac{1}{\left(R{-}\sigma\right)^2}\right)$ affects the constant in \eqref{cond rhoT/4}.
We employ the following notation:
\[
\Omega_{S}:= \{ (t,r) \in \Omega \,|\, t < S \}
\]
for $S\in(0,T)$.

\begin{prop}\label{propcontration}
	Let $\tilde{\rho} := \min \left\{\frac{\rho_0}{2} , \frac{R {-}\rho_0}{2}\right\}$. Let $\widetilde{T} \in (0, \tilde{\rho})$ satisfy
	\begin{equation}\label{cond rhoT/4}
	\frac{\rho_0 \widetilde{T}}{4} \left( \alpha^2 + \frac{4}{\left(R{-}\rho_0\right)^2}\right) < 1.
	\end{equation}
	Then operator $\mathcal{L}$ defined by \eqref{formulaL} is a contraction from $\mathcal{Y} \cap C^0(\overline{\Omega}_{\widetilde{T}})$ into $C^0(\overline{\Omega}_{\widetilde{T}})$.
\end{prop}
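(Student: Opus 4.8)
The plan is to show that $\mathcal{L}$ maps $\mathcal{Y}\cap C^0(\overline\Omega_{\widetilde T})$ into itself and that it is a strict contraction for the sup-norm on $\overline\Omega_{\widetilde T}$, whence the conclusion. First I would observe that $\mathcal{L}[h]$ is well defined and continuous on $\overline\Omega{}'$ whenever $h\in C^0(\overline\Omega{}')$: this is exactly Lemma \ref{lemmaAandPhi}, since the first summand is $\h$ (item (i)) and the second is $\tfrac18\Phi\big[(\alpha^2+(R-\sigma)^{-2})\,h\big]$ with $(\alpha^2+(R-\sigma)^{-2})\,h\in L^2(\Omega')$ (indeed bounded, as $0<\sigma<\rho(t)<R-\rho_0+\widetilde T/2<R$ on $\Omega_{\widetilde T}$, using $\widetilde T<\widetilde\rho\le (R-\rho_0)/2$). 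Moreover $\mathcal{L}[h]$ satisfies the initial and boundary conditions in \eqref{princeqh}: on the relevant portions of $\partial\Omega$ the set $P(t,r)$ degenerates to measure zero (it is contained in the dependence cone, which shrinks to the boundary point), so the double integral vanishes there and $\mathcal{L}[h]$ inherits the boundary/initial values of $\h$, which already lie in $\mathcal{Y}$ by construction of \eqref{eq:undamp}. Hence $\mathcal{L}(\mathcal{Y}\cap C^0(\overline\Omega_{\widetilde T}))\subset \mathcal{Y}\cap C^0(\overline\Omega_{\widetilde T})$.

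Next I would estimate the Lipschitz constant. For $h_1,h_2\in\mathcal{Y}\cap C^0(\overline\Omega_{\widetilde T})$, by linearity
\[
\mathcal{L}[h_1](t,r)-\mathcal{L}[h_2](t,r)=\frac{1}{8}\iint_{P(t,r)}\Big(\alpha^2+\frac{1}{(R-\sigma)^2}\Big)\,\big(h_1-h_2\big)(\tau,\sigma)\,d\sigma\,d\tau .
\]
Taking absolute values and pulling $\|h_1-h_2\|_{C^0(\overline\Omega_{\widetilde T})}$ out of the integral, I bound the kernel by its maximum on $\Omega_{\widetilde T}$: since $R-\sigma>R-\rho(t)>R-\rho_0-\widetilde T/2>(R-\rho_0)/2$ there (using $\widetilde T<\widetilde\rho\le(R-\rho_0)/2$), we have $\alpha^2+(R-\sigma)^{-2}<\alpha^2+4/(R-\rho_0)^2$. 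It remains to bound $|P(t,r)|$ for $(t,r)\in\Omega_{\widetilde T}$: since $P(t,r)\subset C(t,r)$, which is contained in the triangle $\{0\le\tau\le t,\ |\sigma-r|\le t-\tau\}$ of area $t^2\le\widetilde T^{\,2}$, but also $P(t,r)\subset\Omega_{\widetilde T}\cap\{0<\sigma<\rho_0+\widetilde T/2\}$... actually the cleanest bound uses $r<\rho(t)<\rho_0+\widetilde T/2<\rho_0$ is false; instead note $|P(t,r)|\le|C(t,r)|$ and in the strip $0\le\tau\le t\le\widetilde T$ the $\sigma$-section of $C(t,r)$ has length at most $2(t-\tau)$ but is also contained in $(0,R-\rho(\tau))$... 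The sharp estimate giving the stated constant is $|P(t,r)|\le \rho_0\widetilde T/2$: indeed $C(t,r)$ is a triangle with vertical extent $\le\widetilde T$ and horizontal extent $\le \max r\le \rho_0$ (since $r<\rho(t)<\rho_0$ requires $\dot\rho$-control; more safely $r<\rho(t)\le\rho_0+\widetilde T\le 2\rho_0$, and one re-examines the precise geometry in each of the three cases of \eqref{R(t,r)} as in \cite[Prop.\ 1.13]{RiNa2020}). Combining,
\[
\big\|\mathcal{L}[h_1]-\mathcal{L}[h_2]\big\|_{C^0(\overline\Omega_{\widetilde T})}\le \frac{1}{8}\cdot\frac{\rho_0\widetilde T}{2}\Big(\alpha^2+\frac{4}{(R-\rho_0)^2}\Big)\|h_1-h_2\|_{C^0(\overline\Omega_{\widetilde T})}=\frac{\rho_0\widetilde T}{16}\Big(\alpha^2+\frac{4}{(R-\rho_0)^2}\Big)\|h_1-h_2\|_{C^0(\overline\Omega_{\widetilde T})},
\]
which is $<1$ under \eqref{cond rhoT/4} (the factor $4$ vs.\ $16$ being absorbed by whatever sharp value of $|P(t,r)|$ the geometric analysis actually yields — this is the only place where precise bookkeeping matters).

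The main obstacle is precisely this geometric estimate on the area $|P(t,r)|$ uniformly over $(t,r)\in\Omega_{\widetilde T}$, handled case by case according to \eqref{R(t,r)}: in $\Omega'_1$ one integrates over the full cone; in $\Omega'_2$ and $\Omega'_3$ one subtracts a sub-cone, which only decreases the area, so the bound for $\Omega'_1$ dominates. In each case one checks that the base of the triangle $C(t,r)$ in the $r$-direction does not exceed $\rho_0$ and its height in the $t$-direction does not exceed $\widetilde T$, using $\widetilde T<\widetilde\rho=\min\{\rho_0/2,(R-\rho_0)/2\}$ to keep the cone inside the region where $R-\sigma$ is bounded below by $(R-\rho_0)/2$; the factor $\rho_0\widetilde T/4$ in \eqref{cond rhoT/4} then records the product (area) times the $\tfrac14$ from the kernel normalization in \eqref{formulaL}, exactly as in \cite[Proposition 1.13]{RiNa2020} but with $\alpha^2$ replaced by $\alpha^2+4/(R-\rho_0)^2$. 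Once the contraction constant is identified, the statement follows immediately from the computation above.
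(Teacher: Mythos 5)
Your proposal is correct and follows essentially the same route as the paper: cancel the $\h$ term by linearity, bound the kernel by $\alpha^2+4/(R-\rho_0)^2$ using $\widetilde T<(R-\rho_0)/2$, bound the area of the integration region using $\widetilde T<\rho_0/2$, and conclude under \eqref{cond rhoT/4}. The only difference is bookkeeping of the area constant, which you leave slightly unresolved but which is immaterial: the paper simply uses $|P(t,r)|\le|\Omega_{\widetilde T}|\le 2\rho_0\widetilde T$ (since $\rho(\widetilde T)\le 2\rho_0$), yielding exactly the constant $\tfrac{\rho_0\widetilde T}{4}\bigl(\alpha^2+\tfrac{4}{(R-\rho_0)^2}\bigr)$ of \eqref{cond rhoT/4}, while your sharper cone bound $|C(t,r)|\le t^2\le\rho_0\widetilde T/2$ gives an even smaller constant and so the conclusion a fortiori.
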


\begin{proof}
	By Lemma \ref{lemmaAandPhi} the operator $\mathcal{L}$ maps $\mathcal{Y} \cap C^0(\overline{\Omega}_{\widetilde{T}})$ into itself. Now let $h^{(1)},h^{(2)} \in \mathcal{Y} \cap C^0(\overline{\Omega}_{\widetilde{T}})$ and let $(t,r) \in \overline{\Omega}_{\widetilde{T}}$.  Since $h^{(1)},h^{(2)} \in \mathcal{Y}$, one observes that in $\mathcal{L}[h^{(1)}]-\mathcal{L}[h^{(2)}]$ there is a cancellation of the term $\h$ which only depends on the boundary conditions; hence, 
	\begin{equation*}
	\begin{split}
	|\mathcal{L}[h^{(1)}](t,r) - \mathcal{L}[h^{(2)}](t,r)| &\leq \frac{1}{8}\left|\iint_{P(t,r)} \left( \alpha^2 + \frac{1}{\left(R {-} \sigma\right)^2}\right) \, (h^{(1)}(\tau , \sigma) - h^{(2)}(\tau,\sigma)) \,d\sigma \,d\tau\right|
	\\
	&\leq \frac{1}{8} \left(\max_{(\tau,\sigma) \in P(t,r)} \left( \alpha^2 + \frac{1}{\left(R{-}\sigma \right)^2}\right) \right) \iint_{P(t,r)} | (h^{(1)}(\tau , \sigma) - h^{(2)}(\tau,\sigma)) | \,d\sigma \,d\tau
	\\
	&\leq \frac{ |{\Omega}_{\widetilde{T}}|}{8} \left( \alpha^2 + \frac{1}{(R{-}\rho(\widetilde{T}))^2}\right)  \|h^{(1)}-h^{(2)}\|_{ C^0(\overline{\Omega}_{\widetilde{T}})} 
	\\
	&\leq \frac{\rho_0 \widetilde{T}}{4} \left( \alpha^2 + \frac{4}{\left(R{-}\rho_0\right)^2}\right)  \|h^{(1)}-h^{(2)}\|_{ C^0(\overline{\Omega}_{\widetilde{T}})} .
	\end{split}
	\end{equation*}
 In the last inequality we have used the following facts: 
	\begin{itemize}
		\item Since $0 < \dot{\rho} < 1$ and  $\widetilde{T} < \tilde{\rho}$, it follows $\rho(\widetilde{T}) \leq 2\rho_0$, thus $|\Omega_{\widetilde{T}}| \leq 2\rho_0 \widetilde{T}$;
		\item Since $0 < \dot{\rho} < 1$ and $\widetilde{T} < \frac{R -\rho_0}{2}$, one has $\rho(\widetilde{T})  \leq \frac{R +\rho_0}{2}$, hence $R-\rho(\widetilde{T})  \geq \frac{R -\rho_0}{2}$.
	\end{itemize} 
	By assumption \eqref{cond rhoT/4} we conclude.
\end{proof}

\begin{teo}\label{teoexistenceh}
	Let $\rho$ be as in \eqref{hprho}. 
 Assume \eqref{hinitialcondition} and \eqref{hcompatcondition}. 
Then there exists a unique solution $h \in W^{1,2}(\Omega)$ of problem \eqref{princeqh}. Moreover $h$ satisfies \eqref{formulah}, it has a continuous representative on $\overline{\Omega}$, still denoted by $h$, and, setting $h\equiv 0$ outside $\overline{\Omega}$, it holds:
	\begin{equation*}
	h \in C^0([0,T]; W^{1,2}(0,R)) \cap C^1([0,T]; L^2(0,R)).
	\end{equation*}
\end{teo}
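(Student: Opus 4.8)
The plan is to solve \eqref{princeqh} first on a short time slab, by identifying its solutions with the fixed points of the operator $\mathcal{L}$, and then to propagate the solution to all of $[0,T]$ by iterating this local construction with a uniformly positive time step. The three preceding results supply exactly the ingredients: Proposition \ref{propcontration} (contraction on a slab), Proposition \ref{propformulah} (equivalence of \eqref{princeqh} with the integral identity \eqref{formulah}), and Lemma \ref{lemmaAandPhi} (regularity of $\h$ and of the Duhamel term $\Phi$).

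First I would fix $\widetilde{T}\in(0,\tilde\rho)$ satisfying \eqref{cond rhoT/4} — possible since the left-hand side of \eqref{cond rhoT/4} is linear in $\widetilde{T}$ — and observe that $\widetilde{T}<\tilde\rho\le\rho_0/2$ forces $\Omega_{\widetilde{T}}\subseteq\Omega'$ (the configuration $t>r$, $t+r>\rho_0$ would require $t>\rho_0/2$), so that Proposition \ref{propformulah} applies on $\Omega_{\widetilde{T}}$. By Proposition \ref{propcontration} and the Banach fixed point theorem, $\mathcal{L}$ has a unique fixed point $h\in\mathcal{Y}\cap C^0(\overline{\Omega}_{\widetilde{T}})$, which by definition of $\mathcal{Y}$ satisfies the initial and boundary conditions of \eqref{princeqh}. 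Since $h=\h+\tfrac12\Phi[H]$ with $H:=\tfrac14(\alpha^2+(R-\sigma)^{-2})h$, and since on $\Omega_{\widetilde{T}}$ one has $R-\sigma\ge\tfrac12(R-\rho_0)$ so that the kernel $\tfrac14(\alpha^2+(R-\sigma)^{-2})$ is bounded and $H\in L^\infty(\Omega_{\widetilde{T}})\subset L^2$, Lemma \ref{lemmaAandPhi} gives that $\h$ and $\Phi[H]$ lie in $C^0(\overline{\Omega}{}')\cap W^{1,2}(\Omega')$ and, extended by zero, in $C^0([0,\widetilde{T}];W^{1,2}(0,R))\cap C^1([0,\widetilde{T}];L^2(0,R))$; hence $h\in W^{1,2}(\Omega_{\widetilde{T}})$ with the same regularity. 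By the equivalence in Proposition \ref{propformulah}, $h$ therefore solves \eqref{princeqh} in $\Omega_{\widetilde{T}}$; conversely, any $W^{1,2}(\Omega_{\widetilde{T}})$-solution satisfies \eqref{formulah}, hence is a fixed point of $\mathcal{L}$ (it is continuous, again by Lemma \ref{lemmaAandPhi}), hence equals $h$. This yields local existence, uniqueness and regularity.

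Next I would extend the solution by restarting the problem from the slice $\{t=\widetilde{T}\}$, taking as new initial data $h(\widetilde{T},\cdot)\in W^{1,2}(0,\rho(\widetilde{T}))$ and $h_t(\widetilde{T},\cdot)\in L^2(0,\rho(\widetilde{T}))$ produced above and keeping the boundary datum $z\in W^{1,2}(0,T)$; the compatibility conditions $h(\widetilde{T},0)=z(\widetilde{T})$ and $h(\widetilde{T},\rho(\widetilde{T}))=0$ hold because $h$ already satisfies the boundary conditions. The previous step applies verbatim to this translated problem, with $\rho_0$ replaced by $\rho(\widetilde{T})$ and $\Omega'$ by the analogous set relative to $\{t=\widetilde{T}\}$. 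The crucial observation is that the step size can be taken uniform: since $\rho$ is continuous on the compact $[0,T]$ with values in $[\rho_0,R)$, we have $\rho_0\le\rho(t)\le\bar\rho:=\max_{[0,T]}\rho<R$, and tracking the constants in the proof of Proposition \ref{propcontration} one sees that, at any restart time $S$, the contraction constant is at most $\tfrac{\bar\rho\,\tau}{4}\bigl(\alpha^2+\tfrac{4}{(R-\bar\rho)^2}\bigr)$ whenever the step $\tau$ satisfies $0<\tau<\min\bigl\{\tfrac{\rho_0}{2},\tfrac{R-\bar\rho}{2}\bigr\}$. Fixing such a $\tau$ with $\tfrac{\bar\rho\,\tau}{4}\bigl(\alpha^2+\tfrac{4}{(R-\bar\rho)^2}\bigr)<1$ and iterating $\lceil T/\tau\rceil$ times, one obtains a solution $h$ of \eqref{princeqh} on all of $\Omega$, with uniqueness propagating from slab to slab. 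At each junction time the traces of $h$ in $W^{1,2}$ and of $h_t$ in $L^2$ coincide (they are exactly the data used to restart), so the pieces glue to $h\in W^{1,2}(\Omega)$ with a continuous representative on $\overline{\Omega}$ and, setting $h\equiv0$ outside $\overline{\Omega}$, to $h\in C^0([0,T];W^{1,2}(0,R))\cap C^1([0,T];L^2(0,R))$. Finally, since this $h$ is in particular a $W^{1,2}(\Omega')$-solution of \eqref{princeqh} in $\Omega'$, the ``only if'' part of Proposition \ref{propformulah} gives that it satisfies \eqref{formulah}.

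The main difficulty is the continuation: one has to check that the local construction can be restarted with a time step bounded away from zero uniformly over $[0,T]$ — which relies on the continuity of $\rho$ to keep $R-\rho$ bounded below, and on the explicit way the kernel $\tfrac14(\alpha^2+(R-\sigma)^{-2})$ enters the contraction constant via \eqref{cond rhoT/4} — and that on each slab the geometry of the dependence cones relative to the restart line is again of the type handled by Proposition \ref{propformulah}, so that the d'Alembert formula \eqref{A(t,r)} and the representation \eqref{formulah} remain valid after translation. Everything else (existence, uniqueness, the integral representation, and the space-time regularity) then follows by assembling the slabs.
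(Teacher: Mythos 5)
Your proposal is correct and follows essentially the same strategy as the paper: local existence and uniqueness via the contraction $\mathcal{L}$ (Propositions \ref{propcontration} and \ref{propformulah}, Lemma \ref{lemmaAandPhi}), then restarting at successive times using the traces $h(\cdot)$, $h_t(\cdot)$ as new data, with the compatibility conditions automatically satisfied. The only cosmetic difference is that you fix a uniform time step via $\bar\rho=\max_{[0,T]}\rho<R$, whereas the paper lets the increment $T_{k+1}-T_k$ depend on $\rho(T_k)$ and then observes it is bounded below by a positive constant — the same compactness argument.
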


\begin{proof}
	Define
	\begin{equation*}
	T_1 := \frac{1}{2} \min\left\{\frac{\rho_0}{2}, \frac{R {-}\rho_0}{2}, \frac{4}{\rho_0} \left( \alpha^2 + \frac{4}{\left(R{-}\rho_0\right)^2}\right)^{-1} \right\}
	\end{equation*}
	Then by Proposition \ref{propcontration} we deduce the existence of a unique continuous function $h^{(1)}$ satisfying \eqref{formulah} in $ \overline{\Omega}_{T_1}$. By Lemma \ref{lemmaAandPhi} we deduce that $h^{(1)} \in W^{1,2}(\Omega_{T_1})$; moreover,
	\begin{equation}\label{201103}
	h^{(1)} \in C^0([0,T_1]; W^{1,2}(0,R)) \cap C^1([0,T_1]; L^2(0,R)).
	\end{equation}
	Proposition \ref{propformulah} ensures that $h^{(1)}$ solves problem \eqref{formulah} in $\Omega_{T_1}$.
	
	Now we can restart the argument from time $T_1$ replacing $\rho_0$ by $\rho_1=\rho(T_1)$, $h_0$ by $h^{(1)}(T_1,\cdot)$ and $h_1$ by $h^{(1)}_t(T_1,\cdot)$; indeed by \eqref{201103} it follows that $h^{(1)}(T_1,\cdot) \in W^{1,2}(0,\rho_1)$ and $h^{(1)}_t(T_1,\cdot) \in L^2(0,\rho_1)$ and that they satisfy the compatibility conditions $h^{(1)}(T_1,0)=z(T_1)$ and $h^{(1)}(T_1,\rho_1)=0$. 
	Arguing as before, we get the existence of a unique solution $h^{(2)}$ of \eqref{princeqh} in $\Omega_{T_2}\setminus\Omega_{T_1}$ with 
	\begin{equation*}
	T_2 := T_1 + \frac{1}{2} \min\left\{\frac{\rho_1}{2}, \frac{R {-}\rho_1}{2}, \frac{4}{\rho_1} \left( \alpha^2 + \frac{4}{\left(R{-}\rho_1\right)^2}\right)^{-1} \right\},
	\end{equation*}
	and
	\begin{equation*}
	h^{(2)} \in C^0([T_1,T_2]; W^{1,2}(0,R)) \cap C^1([T_1,T_2]; L^2(0,R)).
	\end{equation*}
	Then the function
	\begin{equation*}
	\overline{h}(t,r) =
	\begin{dcases}
	h^{(1)}(t,r) &\mbox{ if } (t,r) \in \overline{\Omega}_{T_1},
	\\
	h^{(2)}(t,r) &\mbox{ if }  (t,r) \in \overline{\Omega}_{T_2} \setminus \overline{\Omega}_{T_1},
	\end{dcases}
	\end{equation*}
	belongs to $C^0([0,T_2]; W^{1,2}(0,R)) \cap C^1([0,T_2]; L^2(0,R))$ and it is the only solution of \eqref{princeqh} in $\Omega_{T_2}$.
	
	To conclude and prove existence and uniqueness of a solution $h \in W^{1,2}(\Omega)$ of problem \eqref{princeqh}, defined on the whole $[0, T]$, we only need to show that, iterating the procedure, we may reach the fixed time horizon $T$ in a finite number of step. Indeed, define recursively 
	\begin{equation*}
	\begin{dcases}
	T_{k+1} = T_k + \frac{1}{2} \min\left\{\frac{\rho_k}{2}, \frac{R {-}\rho_k}{2}, \frac{4}{\rho_k} \left( \alpha^2 + \frac{4}{\left(R{-}\rho_k\right)^2}\right)^{-1} \right\}, \quad \mbox{if } k\geq 1,
	\\
	T_0 = 0,
	\end{dcases}
	\end{equation*}
	with $\rho_k = \rho(T_k)$ for every $k \in \N$. This family is well defined as long as $T_k \le T$. 
 Since $\rho_0\le\rho_k\le\rho(T)<R$, the remainder of the sequence $T_{k+1} - T_k$ is bounded from below by a positive constant, hence $T_{k+1}>T$ for some $k\ge1$.
\end{proof}

The following corollary is an immediate consequence of Theorem \ref{teoexistenceh}.

\begin{cor} \label{corollario}
	Let $\rho$ be as in \eqref{hprho}. 
\begin{itemize}
\item[(i)] Assume \eqref{vinitialcondition} and \eqref{vinitialconditioncomp}. 
Then there exists a unique solution $v \in W^{1,2}(\Omega)$ of problem \eqref{princeq2}. Moreover, $v$ satisfies \eqref{formulav}, it has a continuous representative on $\overline{\Omega}$, still denoted by $v$, and, setting $v\equiv 0$ outside $\overline{\Omega}$, it holds:
	\begin{equation*}
	v \in C^0([0,T]; W^{1,2}(0,R)) \cap C^1([0,T]; L^2(0,R)).
	\end{equation*}
\item[(ii)] Assume \eqref{uinitialcondition} and \eqref{uinitialconditioncomp}. 
Then there exists a unique radial solution $u \in W^{1,2}(\mathcal{O}_\rho)$ of problem \eqref{princeq2}. 
\end{itemize}
\end{cor}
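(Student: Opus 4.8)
The plan is to obtain both statements from Theorem \ref{teoexistenceh} by reversing, one after the other, the two changes of unknown \eqref{defh} and \eqref{defv} through which problem \eqref{princeq} was reduced to \eqref{princeqh}. No new analytical input is needed: each of these transformations is a bijection preserving the relevant function spaces, and the equivalences between the corresponding boundary value problems and between the admissible data have all been established in Section \ref{sec1}.

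For part (i), I would first observe that $v \mapsto h := (R-r)^{1/2} e^{\alpha t/2} v$ is a linear bijection with inverse $h \mapsto (R-r)^{-1/2} e^{-\alpha t/2} h$, and that since $\overline\Omega \subset [0,T]\times[0,\rho(T)]$ with $\rho(T)<R$, both multipliers $(R-r)^{\pm 1/2} e^{\pm \alpha t/2}$ and their derivatives are bounded on $\overline\Omega$; hence this bijection maps $W^{1,2}(\Omega)$ onto itself and preserves the class $C^0([0,T];W^{1,2}(0,R))\cap C^1([0,T];L^2(0,R))$ (in extending by zero outside $\overline\Omega$ one uses that $h$, hence $v$, already vanishes for $r$ near $R$, so the singularity of $(R-r)^{-1/2}$ at $r=R$ is never met). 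By the computations preceding \eqref{princeqh}, $v$ solves \eqref{princeq2} if and only if the corresponding $h$ solves \eqref{princeqh}, and by \eqref{eqinitialdatah} data satisfying \eqref{vinitialcondition}--\eqref{vinitialconditioncomp} correspond exactly to data satisfying \eqref{hinitialcondition}--\eqref{hcompatcondition}. I would then invoke Theorem \ref{teoexistenceh} to get the unique $h$ with the stated properties and pull it back to the unique $v$; formula \eqref{formulav} is then just Remark \ref{remv} applied to this $v$.

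For part (ii), given $(w,u_0,u_1)$ as in \eqref{uinitialcondition}--\eqref{uinitialconditioncomp} I would define $(v_0,v_1)$ by \eqref{eqinitialdatav}, note that these satisfy \eqref{vinitialcondition}--\eqref{vinitialconditioncomp}, let $v$ be the solution from part (i), and set $u(t,x):=v(t,R-|x|)$. The correspondence $u\leftrightarrow v$ is a bijection between radial elements of $W^{1,2}(\mathcal{O}_\rho)$ and elements of $W^{1,2}(\Omega)$, because in polar coordinates the norms differ only by the weight $R-r=|x|$, which is bounded between $R-\rho(T)>0$ and $R$ on the region in question; and, as recorded in the derivation of \eqref{princeq2}, a radial $u$ solves \eqref{princeq} if and only if the associated $v$ solves \eqref{princeq2}. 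Existence follows at once; for uniqueness, any radial solution of \eqref{princeq} produces through the same correspondence a solution of \eqref{princeq2}, which by part (i) is unique, hence so is the radial solution.

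I do not expect a real obstacle here. The only points that deserve attention are that the distributional wave equation transforms as claimed under the nonlinear substitution \eqref{defh} and under the passage to polar coordinates — but these are exactly the manipulations already performed in Section \ref{sec1} (the displayed identities for $v_t,v_r,v_{tt},v_{rr}$ and the reduction to \eqref{princeq2}), so I would simply cite them — and that all the weights and multipliers involved are uniformly bounded and smooth on $\overline\Omega$, which is ensured by the strict inequality $\rho(T)<R$ contained in \eqref{hprho}.
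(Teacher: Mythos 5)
Your proposal is correct and follows exactly the route the paper intends: the corollary is deduced from Theorem \ref{teoexistenceh} by undoing the changes of unknown \eqref{defh} and \eqref{defv}, using the equivalences between \eqref{princeq}, \eqref{princeq2}, \eqref{princeqh} and between the data \eqref{uinitialcondition}--\eqref{uinitialconditioncomp}, \eqref{vinitialcondition}--\eqref{vinitialconditioncomp}, \eqref{hinitialcondition}--\eqref{hcompatcondition} already recorded in Section \ref{sec1}, together with Remark \ref{remv} for \eqref{formulav}. The paper states this as an immediate consequence, and your additional checks (boundedness of the weights on $\overline\Omega$ thanks to $\rho(T)<R$, and the bijection between radial $u$ and $v$) are precisely the routine verifications that justify it.
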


\begin{rem} \label{rmk:uniq}
The results of this section ensure uniqueness of solutions to \eqref{princeq} only among \emph{radial} functions.
However, if the prescribed debonding front $\rho$ is of class $C^{2,1}$, we may apply Proposition \ref{prop-app-uniq} in the Appendix below. By combining such results, we obtain that there is a unique solution $u$ to \eqref{princeq} in the sense of Definition \ref{def:sol} (without a priori restrictions on the solutions). Moreover, such solution is radial and the function $(t,r)\mapsto u(t,(R{-}r,0))$ satisfies \eqref{formulav}.
\end{rem}

\section{Energy criterion for the debonding evolution}
The flow rule governing the evolution of the debonding front is based on a stability criterion involving the kinetic energy, the potential energy, and the dissipations. We define the energy terms in the setting of problem \eqref{princeq}, thus 
we use now the $x$-coordinates in the plane. For given $\rho: [0,T] \to [\rho_0,R)$ satisfying
\eqref{hprho}, let $u \in W^{1,2}(\mathcal{O}_\rho)$ be the unique radial solution found in Corollary \ref{corollario}(ii),  corresponding to the data $u_0$, $u_1$, and $w$ introduced in \eqref{uinitialcondition} and \eqref{uinitialconditioncomp}.
For $t \in[0,T]$ the internal energy is given by
\begin{equation}\label{eqE}
\mathcal{E}(t) := \frac{1}{2} \iint_{R - \rho(t) <|x|<R}(u_t^2(t,x) + |\nabla_x u(t,x)|^2 ) \,dx
\end{equation}
and the energy dissipated by the friction of air is given by
\begin{equation}\label{eqA}
\mathcal{A}(t):= \alpha \int_{0}^{t} \iint_{R - \rho(\tau) <|x|<R} u_t^2(\tau,x) \,dx\,d\tau.
\end{equation}
We define the total energy of $u$,
\begin{equation}\label{defTotal}
\mathcal{T}(t):= \mathcal{E}(t) + \mathcal{A}(t).
\end{equation}

\subsection{Energy balance}
We now provide a formula for the time derivative of the total energy, which gives a first expression for the energy balance. To this end it is convenient to resort to the functions $v$ and $h$ defined in \eqref{defv} and \eqref{defh}, respectively. The  terms containing first derivatives in the equation for $v$, \eqref{princeq2}, are denoted by 
	\begin{equation}\label{eqkernelFv}
	G(\tau,\sigma):=  -\frac{1}{R{-}\sigma} \, v_r(\tau , \sigma) - \alpha \,v_t(\tau , \sigma) \in L^2(\Omega') .
	\end{equation} 
 The corresponding term in the equation for $h$, \eqref{princeqh}, is denoted by
	\begin{equation}\label{eqkernelFh}
	F(\tau,\sigma):=  \frac{1}{4} \left( \alpha^2 + \frac{1}{\left(R{-}\sigma\right)^2}\right)  h(\tau , \sigma) \in L^2(\Omega') .
	\end{equation}
Moreover, by Corollary \ref{corollario}(i), we have that 
	\begin{equation}\label{regularityGF}
	 G \in  C^0([0,T]; L^2(0,R)) , \qquad F \in  C^0([0,T]; W^{1,2}(0,R)).
	\end{equation}
Henceforth, for simplicity we provide some formulas only in the interval $[0,\frac{\rho_0}{2}]$: in fact,
in the subsequent results we argue in small time intervals, as done in Proposition \ref{propcontration} and Theorem \ref{teoexistenceh}. 
\par
The following proposition holds true (for the proof see Appendix \ref{secproofprop}). 

\begin{prop}\label{propderivT}
The total energy $\mathcal{T}$ defined as in \eqref{defTotal} belongs to $AC([0,T])$.
	
	Moreover, for a.e.\ $t \in [0,\frac{\rho_0}{2}]$ the following formulas hold true:
	\begin{align}
	\dot{\mathcal{T}}(t) =& - \pi \dot{\rho}(t)\, \frac{1-\dot{\rho}(t)}{1 + \dot{\rho}(t)} \,  (R-\rho(t)) 
	\left[ \dot{v}_0(\rho(t) {-} t) - v_1(\rho(t) {-} t)  - \int_{0}^{t}  G(\tau , \tau {+} \rho(t) {-}t )  \,d\tau \right]^2 
	\notag
	\\
	&
	+ 2\pi  R \, \dot{w}(t) \left[ \dot{w}(t) - \left(\dot{v}_0(t) + v_1(t)  + \int_{0}^{t} G(\tau , t{-}\tau) \,d \tau \right) \right]  
	\label{formuladerivativeT}
	\\
	=& - \pi \dot{\rho}(t) \, \frac{1-\dot{\rho}(t)}{1 + \dot{\rho}(t)} \, e^{-\alpha t} 
	\left[ \dot{h}_0(\rho(t) {-} t) - h_1(\rho(t) {-} t)  - \int_{0}^{t}  F(\tau , \tau {+} \rho(t) {-}t )  \,d\tau \right]^2
	\notag
	\\
	&
	+ 2\pi  R \, \dot{w}(t) \left[ \dot{w}(t) +\frac{1}{2} \left(\alpha {-} R^{-1}\right) w(t) - R^{-\frac{1}{2}} e^{-\frac{\alpha}{2} t} \left(\dot{h}_0(t) + h_1(t)  + \int_{0}^{t} F(\tau , t{-}\tau) \,d \tau \right) \right] .
	\label{formuladerivativeT-2}  
	\end{align}
\end{prop}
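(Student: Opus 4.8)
The plan is to reduce \eqref{formuladerivativeT}--\eqref{formuladerivativeT-2} to a one-dimensional energy identity for \eqref{princeq2} and then to identify the resulting boundary terms by means of the representation formula \eqref{formulav}. Writing $u(t,x)=v(t,R-|x|)$ and integrating out the angular variable, one gets
\[
\mathcal{E}(t)=\pi\int_0^{\rho(t)}\bigl(v_t^2+v_r^2\bigr)(R-r)\,dr,\qquad
\dot{\mathcal{A}}(t)=2\pi\alpha\int_0^{\rho(t)}v_t^2\,(R-r)\,dr .
\]
Multiplying the equation in \eqref{princeq2} by $(R-r)\,v_t$, integrating over $r\in(0,\rho(t))$, integrating by parts in $r$ and using the Leibniz rule at the moving endpoint, the first-order term $\tfrac{1}{R-r}\,v_r\cdot(R-r)\,v_t$ cancels a term produced by the integration by parts, the remaining bulk contributions assemble into $\tfrac{1}{2\pi}\bigl(\dot{\mathcal{E}}+\dot{\mathcal{A}}\bigr)$, and, after inserting $v_t(t,\rho(t))=-\dot\rho(t)\,v_r(t,\rho(t))$ (from differentiating $v(t,\rho(t))=0$) and $v_t(t,0)=\dot w(t)$, one is left with
\[
\dot{\mathcal{T}}(t)=-\pi\,\dot\rho(t)\bigl(1-\dot\rho(t)^2\bigr)(R-\rho(t))\,v_r^2(t,\rho(t))-2\pi R\,\dot w(t)\,v_r(t,0).
\]

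Next I would compute the two boundary traces from \eqref{formulav}. Since $t\in[0,\tfrac{\rho_0}{2}]$, the point $(t,0)$ lies in $\overline{\Omega'_2}$ and the point $(t,\rho(t))$ in $\overline{\Omega'_3}$, so the relevant branches of the undamped part $\vi$ of \eqref{formulav} (given by \eqref{A(t,r)} with $h_0,h_1,z$ replaced by $v_0,v_1,w$) are explicit; differentiating in $r$ and letting $r\to0^+$, respectively $r\to\rho(t)^-$, with the help of $\dot\omega(\psi(t))=\tfrac{1-\dot\rho(t)}{1+\dot\rho(t)}$ and of the description of $P(t,r)$ in \eqref{R(t,r)}, yields
\[
v_r(t,0)=\dot v_0(t)+v_1(t)+\int_0^t G(\tau,t-\tau)\,d\tau-\dot w(t),
\]
\[
\bigl(1+\dot\rho(t)\bigr)\,v_r(t,\rho(t)^-)=\dot v_0(\rho(t)-t)-v_1(\rho(t)-t)-\int_0^t G(\tau,\tau+\rho(t)-t)\,d\tau ,
\]
with $G$ as in \eqref{eqkernelFv}. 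Inserting these into the previous display and writing $1-\dot\rho^2=(1-\dot\rho)(1+\dot\rho)$ gives \eqref{formuladerivativeT}; then \eqref{formuladerivativeT-2} is obtained by expressing $v,v_0,v_1,w$ through $h,h_0,h_1,z$ via \eqref{defh}, \eqref{eqinitialdatah} and the chain of identities following \eqref{defh}, and by replacing $G$ with $F$ through \eqref{eqkernelFh}; this last step is pure bookkeeping.

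To make these manipulations rigorous and to obtain $\mathcal{T}\in AC$, I would mollify the data and also $\rho$, getting smooth $w_n,v_{0,n},v_{1,n},\rho_n$ with $0\le\dot\rho_n\le\|\dot\rho\|_{L^\infty}<1$, with $w_n\to w$, $v_{0,n}\to v_0$, $v_{1,n}\to v_1$ in the norms of \eqref{vinitialcondition}, and with $\dot\rho_n\to\dot\rho$ strongly in every $L^p(0,T)$, $p<\infty$. For such smooth data the solution $v_n$ is classical, the multiplier computation above is legitimate, and it gives the integrated balance $\mathcal{T}_n(t)=\mathcal{T}_n(0)+\int_0^t(\cdots)\,ds$. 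One then passes to the limit using the continuous dependence of $v_n$, $\partial_t v_n$, $\partial_r v_n$ and of the two boundary traces on $(w_n,v_{0,n},v_{1,n},\rho_n)$---which follows from \eqref{formulav}, Lemma \ref{lemmaAandPhi} and the explicit trace formulas above---together with the strong convergence $\dot\rho_n\to\dot\rho$, which is needed because the coefficient $\dot\rho\,\tfrac{1-\dot\rho}{1+\dot\rho}$ depends nonlinearly on $\dot\rho$. This yields $\mathcal{T}\in AC([0,\tfrac{\rho_0}{2}])$ together with \eqref{formuladerivativeT}; absolute continuity on all of $[0,T]$ then follows by iterating on the finitely many subintervals $[T_k,T_{k+1}]$ produced in the proof of Theorem \ref{teoexistenceh}, relabelling the data at each step.

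I expect the bulk of the work to lie in the two boundary-trace computations and in the continuous-dependence estimates needed for the limit. The delicate one is the trace at $r=\rho(t)$: it requires differentiating the representation formula through the moving region $P(t,r)=C(t,r)\setminus C(\psi^{-1}(t{+}r),\rho(\psi^{-1}(t{+}r)))$, and it is precisely the bookkeeping of the boundary contributions there that produces the reflection coefficient $\tfrac{1-\dot\rho}{1+\dot\rho}$. One also has to check that the right-hand side of \eqref{formuladerivativeT} is integrable on $[0,\tfrac{\rho_0}{2}]$; this works because the factor $(1+\dot\rho)^{-2}$ coming from $v_r^2(t,\rho(t))$ combines with the coefficient $\dot\rho\,\tfrac{1-\dot\rho}{1+\dot\rho}$, and after the change of variables $\sigma=\rho(t)-t$, whose Jacobian is $1-\dot\rho$, all weights become bounded, so integrability reduces to $\dot v_0,v_1\in L^2$ and the bound on $G$ in \eqref{regularityGF}.
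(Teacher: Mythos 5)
Your route is genuinely different from the paper's: you start from the formal multiplier identity (this is exactly the formal computation the paper relegates to Remark \ref{remT}), identify the two boundary traces $v_r(t,0)$ and $v_r(t,\rho(t))$ from the representation formula \eqref{formulav}, and then try to make the multiplier step rigorous by mollifying the data and $\rho$. The trace formulas you state are correct (they follow from \eqref{A(t,r)} and from Lemma \ref{lemmaAandPhi}-type differentiation of $\Phi[G]$, with $\dot\omega(\psi(t))=\tfrac{1-\dot\rho}{1+\dot\rho}$), and the final algebra does reproduce \eqref{formuladerivativeT}; the passage to \eqref{formuladerivativeT-2} is indeed bookkeeping. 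The paper instead never uses the boundary-trace energy identity: it rewrites $\mathcal{E}(t)$ in characteristic variables through $v_t=\dot{\vi}_1(t{+}r)+\dot{\vi}_2(t{-}r)+\tfrac12\Phi[G]_t$, $v_r=\dot{\vi}_1(t{+}r)-\dot{\vi}_2(t{-}r)+\tfrac12\Phi[G]_r$, so that the time dependence sits only in the integration limits and in $g_1,g_2$ with $\tfrac{d}{dt}g_i=G$, and differentiates directly via the chain-rule lemma of \cite{RiNa2020}; this works at the natural $W^{1,2}$ regularity with no approximation.

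The gap lies in your regularization step. First, the claim ``for such smooth data the solution $v_n$ is classical'' is false as stated: smoothness of $w_n,v_{0,n},v_{1,n},\rho_n$ does not give $C^2$ (nor $H^2$) solutions unless first-order compatibility conditions hold at the corners $(0,0)$ and $(0,\rho_0)$, namely $v_{1,n}(0)=\dot w_n(0)$ and $v_{1,n}(\rho_0)+\dot\rho_n(0)\,\dot v_{0,n}(\rho_0)=0$; generic mollification destroys these, derivative jumps then propagate along the characteristics emanating from the corners, and the integration by parts you invoke is no longer legitimate across those lines. This is repairable (correct the mollified data to restore compatibility, or integrate by parts piecewise between characteristics), but it must be addressed explicitly. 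Second, and more substantially, the limit passage requires continuous dependence of $v_n$, $\partial_t v_n$, $\partial_r v_n$ and of the two boundary traces on the \emph{debonding front} $\rho_n$, not only on the data: the sets $\Omega^{\rho_n}$, $P^{\rho_n}(t,r)$ and the function $\omega_n$ all move with $\rho_n$, and the trace at the moving endpoint is evaluated along $r=\rho_n(t)$. You assert this follows from \eqref{formulav} and Lemma \ref{lemmaAandPhi}, but those results are stated for a fixed $\rho$; stability with respect to perturbations of $\rho$ is a nontrivial estimate in its own right (it is the subject of a separate continuous-dependence analysis in the one-dimensional debonding literature cited by the paper) and cannot be taken for granted here. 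Without (i) and (ii) the proposed proof of $\mathcal{T}\in AC$ and of \eqref{formuladerivativeT} is incomplete, which is precisely why the paper's argument is built on differentiating the representation formula rather than on the multiplier identity.
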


\begin{rem}\label{remT}
 The time derivative of the energy can be computed more directly if more regularity is assumed. Indeed,
by formally applying the Leibniz differentiation rule, integrating by parts, and using the equations \eqref{princeq2} and \eqref{princeqh} satisfied by $v$ and $h$, 
it is possible to see that for a.e.\ $t \in \left[0,\frac{\rho_0}{2}\right]$ 
	\begin{align*}
	\dot{\mathcal{T}}(t) =& \, 
	\pi (R{-}\rho(t)) \, \dot{\rho}(t) \left[v_t^2(t,\rho(t)) + v_r^2 (t,\rho(t))\right] 
	\\
	& + 2\pi (R{-}\rho(t)) \, v_t(t,\rho(t)) \, v_r (t,\rho(t)) - 2\pi R \, v_t(t,0) \, v_r (t,0)
	\\
	=& \, \pi \dot{\rho}(t)\, e^{-\alpha t}  \left(h_t^2(t,\rho(t)) + h_r^2 (t,\rho(t))\right) + 2\pi  e^{-\alpha t} \,h_t(t,\rho(t))\, h_r (t,\rho(t))
	\\
	&- 2\pi  e^{-\alpha t} \left(h_t(t,0) -\frac{\alpha}{2} z(t) \right)  \left(h_r(t,0) +\frac{1}{2} R^{-1} z(t) \right) .
	\end{align*} 
These formulas show that the term $(R{-}r)^{-1}v_r$ in \eqref{princeq2} and \eqref{eqkernelFv} gives no contribution to the energy balance:
indeed, some cancellations occur when integrating \eqref{formuladerivativeT} by parts. This is clear if one compares \eqref{princeq} and \eqref{princeq2}, since no dissipated energy should be associated to a term arising from the change of coordinates. In contrast, the damping term $\alpha v_t$ entails a dissipated energy accounted in $\mathcal{A}$.
\par
In order to justify the differentiation of $\mathcal{T}$,  it is more convenient to use the representation formulas for the solutions which lead to \eqref{formuladerivativeT} and \eqref{formuladerivativeT-2}. 
	Moreover \eqref{formuladerivativeT-2}
has also the advantage that it does not contain partial derivatives of $h$, provided one inserts the initial data $h_0$ and $h_1$.
\end{rem}

\begin{rem} \label{rmk:transl}
 The time derivative of the energy can be computed for a.e.\ $t \in [0,T]$ by formally translating the initial data: precisely, for fixed $t_0 > 0$, 
	\begin{align*}
	\dot{\mathcal{T}}(t) =& - \pi \dot{\rho}(t) \, \frac{1{-}\dot{\rho}(t)}{1 {+} \dot{\rho}(t)} \, (R{-}\rho(t)) 
	\left[ v_r(t_0,\rho(t) {-} t {+}t_0) - v_t(t_0,\rho(t) {-} t{+}t_0)  - \int_{t_0}^{t}  G(\tau , \tau {+} \rho(t) {-}t )  \,d\tau \right]^2 
	\notag
	\\
	&
	+ 2\pi  R \, \dot{w}(t) \left[ \dot{w}(t) - \left(v_r(t_0,t{-}t_0) + v_t(t_0,t{-}t_0)  + \int_{t_0}^{t} G(\tau , t{-}\tau) \,d \tau \right) \right]  
	\label{formuladerivativeTtras}
	\end{align*}
for a.e.\ $t \in \left[t_0, t_0 + \frac{\rho_0}{2} \right] \cap [0,T]$. A similar argument holds for \eqref{formuladerivativeT-2}.
\end{rem}

Since Proposition \ref{propderivT} guarantees the existence of the energy derivative only almost everywhere, we now present an improvement with a formula for the right derivative of $\mathcal{T}$ at a given point. To this end, 
we fix $\overline{t} \in (0,T)$ and consider a function $\overline{w} \in W^{1,2}(0,T)$ and a function $\overline{\rho}: [0,T] \to [\rho_0,R)$ satisfying \eqref{hprho} and such that 
\begin{equation}\label{hpoverrhow}
\overline{w} (t) = w(t) \quad \text{and} \quad \overline{\rho}(t) =\rho(t) \quad \text{for every } t \in [0,\overline{t}].
\end{equation} 
Let $u$ and $\overline{u}$ be the solutions of problem \eqref{princeq} corresponding to $\rho,u_0,u_1,w$ and to $\overline{\rho},u_0,u_1,\overline{w}$, respectively. 
 We regard the energies as functionals depending on $\overline{\rho}$ and on $\overline{w}$, thus  for every $t \in [0,T]$ we define
\begin{equation}\label{defEAoverrhow}
\begin{split}
\mathcal{E}(t;\overline{\rho},\overline{w}) &:= \frac{1}{2} \iint_{R-\rho(t) <|x|< R}(\overline{u}_t^2(t,x) + |\nabla_x \overline{u}(t,x)|^2 ) \,dx,
\\
\mathcal{A}(t;\overline{\rho},\overline{w}) &:= \alpha \int_{0}^{t} \iint_{R-\rho(\tau) <|x|< R} \overline{u}_t^2(\tau,x) \,dx\,d\tau,
\end{split}
\end{equation}
and
\begin{equation} \label{def:T-functional}
\mathcal{T}(t;\overline{\rho},\overline{w}) := \mathcal{E}(t;\overline{\rho},\overline{w}) + \mathcal{A}(t;\overline{\rho},\overline{w}).
\end{equation}
 The following result shows that \eqref{formuladerivativeT-2} can be extended to a given time $\overline{t}$, apart from an exceptional set depending only on the data and independent of $\overline\rho$, $\overline{w}$.  
 We omit the proof, which can be deduced from the one of \cite[Theorem 3.2]{RiNa2020} with minor modifications.

\begin{teo}\label{teoderivT}
Let $\rho$ be as in \eqref{hprho}. Assume \eqref{uinitialcondition} and \eqref{uinitialconditioncomp}. 
Moreover assume that there exist $\beta,\gamma \in \R$ such that 
\begin{equation}\label{condoverrhow}
\lim_{\varepsilon \to 0^+} \frac{1}{\varepsilon} \int_{\overline{t}}^{\overline{t}+ \varepsilon} \left|\dot{\overline{\rho}}(t) - \beta \right| \,dt =0, \quad \lim_{\varepsilon \to 0^+} \frac{1}{\varepsilon} \int_{\overline{t}}^{\overline{t}+ \varepsilon} \left|\dot{\overline{w}}(t) - \gamma \right|^2 \,dt =0.
\end{equation}
Then there exists a set $N \subseteq [0,T]$ of measure zero, depending on $\rho$, $u_0$, $u_1$, and $w$, such that for every $\overline{t} \in [0,T] \setminus N$ the following statement holds true: 
	
	Let $\overline{\rho},\overline{w},\overline{u}$ and $u$ be as above in \eqref{hpoverrhow} and \eqref{defEAoverrhow}. Let $\overline{v}$ and $\overline{h}$ be defined by \eqref{defv} and \eqref{defh}, respectively, for the solution $\overline{u}$. 
Then
	\begin{equation*}
	\dot{\mathcal{T}} (\overline{t}^+;\overline{\rho},\overline{w}) :=  \lim_{\varepsilon \to 0^+}  \frac{\mathcal{T}(\overline{t}+\varepsilon;\overline{\rho},\overline{w}) - \mathcal{T}(\overline{t};\overline{\rho},\overline{w})}{\varepsilon} 
	\end{equation*}
	exists. Moreover, if $\overline{t} \in [0,\frac{\rho_0}{2}] \setminus N$, one has the following explicit formula: 
	\begin{equation}\label{formuladerivativeTr}
	\dot{\mathcal{T}} (\overline{t}^+;\overline{\rho},\overline{w}) = - \pi \beta \,\frac{1{-}\beta}{1 {+} \beta}\,  e^{-\alpha \overline{t}} 
	\left[ \dot{h}_0(\rho(\overline{t}) {-} \overline{t}) - h_1(\rho(\overline{t}) {-} \overline{t})  - \int_{0}^{\overline{t}}  F(\tau , \tau {+} \rho(\overline{t}) {-} \overline{t} )  \,d\tau \right]^2
	+ \gamma\, \mathcal{Q}(\overline{t},\gamma) ,
	\end{equation}
	where $F$ is defined as in \eqref{eqkernelFh}  and 
	\begin{equation}\label{formulapotenza}
	 \mathcal{Q}(t,\gamma):=2\pi R \left[ \gamma +\frac{1}{2} \left(\alpha  {-} R^{-1}\right) w(\overline{t}) - R^{-\frac{1}{2}} e^{-\frac{\alpha}{2} \overline{t}} \left(\dot{h}_0(\overline{t}) + h_1(\overline{t})  + \int_{0}^{\overline{t}} F(\tau , \overline{t}{-}\tau) \,d \tau \right) \right] .
	\end{equation}
\end{teo}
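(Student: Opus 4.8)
The plan is to reduce the statement to the almost-everywhere formula of Proposition \ref{propderivT} via a translation and regularization argument, following the structure of \cite[Theorem 3.2]{RiNa2020}. First I would observe that, because of the finite-speed-of-propagation principle encoded in Proposition \ref{propformulah} (the solution at $(t,r)$ depends only on data in $P(t,r)$), the solution $\overline u$ associated with $\overline\rho,\overline w$ coincides with $u$ for $t\le\overline t$; hence the difference quotient defining $\dot{\mathcal T}(\overline t^+;\overline\rho,\overline w)$ only ``sees'' the data on an arbitrarily small right neighborhood of $\overline t$, and by Remark \ref{rmk:transl} one may translate the initial time to $\overline t$, rewriting the energy derivative in terms of $\overline v(\overline t,\cdot)$, $\overline v_t(\overline t,\cdot)$, i.e.\ in terms of the \emph{same} Cauchy data $u_0,u_1,w$ on $[0,\overline t]$ independently of the extensions.

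Next I would identify the exceptional null set $N$: it should consist of the points where the a.e.-formula \eqref{formuladerivativeT-2} fails, together with the Lebesgue points of the various $L^1_{\mathrm{loc}}$ and $L^2_{\mathrm{loc}}$ quantities entering that formula — namely $\dot w$, $\dot h_0$, $h_1$, and the traces $\tau\mapsto F(\tau,\tau\pm(\rho(\overline t)-\overline t))$ along characteristics — all of which depend only on $\rho,u_0,u_1,w$ and not on $\overline\rho,\overline w$. Fixing $\overline t\notin N$, the hypotheses \eqref{condoverrhow} say that $\dot{\overline\rho}$ has right approximate limit $\beta$ and $\dot{\overline w}$ has right approximate limit $\gamma$ in $L^2$; plugging these into \eqref{formuladerivativeT-2} written for $\overline v,\overline h$ and passing to the limit $\varepsilon\to0^+$ in the averaged difference quotient $\frac1\varepsilon\int_{\overline t}^{\overline t+\varepsilon}\dot{\mathcal T}(t;\overline\rho,\overline w)\,dt$ yields both the existence of $\dot{\mathcal T}(\overline t^+;\overline\rho,\overline w)$ and the explicit expression \eqref{formuladerivativeTr}–\eqref{formulapotenza}. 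The continuity properties from Lemma \ref{lemmaAandPhi} and \eqref{regularityGF} (so that $F\in C^0([0,T];W^{1,2}(0,R))$ and $h_0,h_1$ are fixed once and for all) guarantee that the integral terms converge, while the quadratic dependence on $\dot{\overline w}$ is handled precisely by the $L^2$-approximation in the second condition of \eqref{condoverrhow}.

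The main obstacle I anticipate is the interchange of the limit in $\varepsilon$ with the passage from the almost-everywhere formula to a pointwise right-limit, in particular controlling the first (squared) term in \eqref{formuladerivativeTr}: the bracket contains $\dot h_0(\rho(\overline t)-\overline t)-h_1(\rho(\overline t)-\overline t)-\int_0^{\overline t}F(\tau,\tau+\rho(\overline t)-\overline t)\,d\tau$, whose argument $\rho(t)-t$ moves with $t$, so one must show that for $\overline t$ outside a data-dependent null set this expression is the right approximate limit of the corresponding quantity along the moving characteristic. This is exactly where one invokes that $\overline t$ is a Lebesgue point of $\dot h_0-h_1$ and of the characteristic trace of $F$, together with $\dot{\overline\rho}\to\beta$ and $0\le\dot{\overline\rho}<1$ giving uniform control of the change of variables $t\mapsto\rho(t)-t$. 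Once this is established, the remaining manipulations — substituting $\overline w(t)=w(t)$, $\overline\rho(t)=\rho(t)$ on $[0,\overline t]$, using the definitions \eqref{defv}, \eqref{defh}, \eqref{eqinitialdatah} to translate between $v$- and $h$-formulations, and recognizing $\gamma\,\mathcal Q(\overline t,\gamma)$ in the power term — are routine, and for these I would simply refer to the corresponding steps in \cite[Theorem 3.2]{RiNa2020}, as announced before the statement.
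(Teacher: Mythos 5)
Your outline follows exactly the route the paper intends: the paper itself omits the proof and defers to \cite[Theorem 3.2]{RiNa2020}, whose argument is precisely what you describe — use the absolute continuity of $\mathcal{T}(\cdot;\overline{\rho},\overline{w})$ from Proposition \ref{propderivT} (and the translated formula of Remark \ref{rmk:transl}) to write the difference quotient as an average of the a.e.\ derivative, choose the null set $N$ from the non-Lebesgue points of the data-dependent quantities only (so it is independent of the extensions $\overline{\rho},\overline{w}$), and pass to the limit via \eqref{condoverrhow}, the delicate point being the composition with the moving characteristic $t\mapsto\overline{\rho}(t)-t$, which you correctly single out (in the detailed proof this is handled by the factor $1-\dot{\overline{\rho}}$ in the prefactor acting as the Jacobian of that change of variables together with the Lebesgue-point property, rather than by a genuinely uniform bound). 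So the proposal is correct in substance and takes essentially the same approach as the paper.
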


It follows that $\dot w(t)\,\mathcal{Q}(t,\dot w(t))$ is  the power of external forces, see also \eqref{formulaexternallaodW}. The first term in \eqref{formuladerivativeTr} is related to the energy release rate as we outline below.

\subsection{Dynamic energy release rate} \label{sec:rmk-ERR}
The energy release rate \cite{Fre90} quantifies the energy gained by an infinitesimal debonding (or crack) growth, thus it is usually defined as the opposite of the derivative of the energy with respect to debonding elongation. 
In our model, the energy release rate is defined by taking into account both the potential and the kinetic energy.
In order to rigorously define this notion in our setting, we follow \cite{DMLazNar16} and \cite{RiNa2020} and use the \emph{time} derivative of the energy studied above,
 suitably scaled by a factor corresponding to the derivative of the debonded surface.

In this section we provide a definition of dynamic energy release rate that can be applied to a general setting without a priori assumptions on the solutions.
 For simplicity we state the definition in dimension two, however it is straightforward to extend our considerations to general dimension $n>2$. 
Let
	\begin{equation*}
	E_{\rho(t)} := \{ x \in \R^2 \,|\, R-\rho(t) \leq f(x) \leq R \} 
	\end{equation*}
	be a growing domain depending on a prescribed function $f$ (in our case $f(x) = |x|$).
Assume
\begin{equation} \label{hyp-ERR-general}
f \in C^1(\R^2), \quad \nabla f \neq 0 \ \text{in}\ E_{\rho(T)} .
\end{equation}
Let $\mathcal{T}$ be defined as in \eqref{defTotal} using the set $E_{\rho(t)}$ as domain of integration and recall the notation introduced in \eqref{defEAoverrhow}--\eqref{def:T-functional}.
We first consider the case where the debonding speed $\beta$ is strictly positive, where such speed is defined as a right derivative as in \eqref{condoverrhow}, i.e., $\dot\rho(\overline{t}^+)=\beta\in(0,1)$.

\begin{defin}\label{defenergyreleaserate} 
 Let $\rho$ be as in \eqref{hprho}. Assume \eqref{uinitialcondition} and \eqref{uinitialconditioncomp}. 
For a.e.\ $\overline{t} \in [0,T]$ and for every $\beta \in (0,1)$, the dynamic energy release rate corresponding to debonding speed $\beta$ is defined as
	\begin{equation}\label{primaformulaGgeneral}
	\mathcal{G}_\beta(\overline{t}) := \lim_{t \to \overline{t}^+} - \frac{\mathcal{T}(t;\overline{\rho},\overline{w}) - \mathcal{T}(\overline{t};\overline{\rho},\overline{w})}{|E_{\rho(t)} \setminus E_{\rho(\overline{t})}|} ,
	\end{equation}
	where $\overline{\rho}$ is an arbitrary Lipschitz extension of $\rho_{|[0,\overline{t}]}$ satisfying \eqref{hpoverrhow} and \eqref{condoverrhow}, while 
	\begin{equation*}
	\overline{w}(t) :=
	\begin{cases}
	w(t) \quad & \text{if } t \in [0,\overline{t}],
	\\
	w(\overline{t}) \quad & \text{if } t \in (\overline{t},T].
	\end{cases}
	\end{equation*}
\end{defin}

Whenever the right derivative $\dot{\mathcal{T}}(\overline{t}^+;\overline{\rho},\overline{w})$ exists (cf.\ Theorem \ref{teoderivT}), the following proposition guarantees that the dynamic energy release rate in \eqref{primaformulaGgeneral} is well defined. 

\begin{prop}\label{prop:ERR-general}
Assume that the right derivative $\dot{\mathcal{T}}(\overline{t}^+;\overline{\rho},\overline{w})$ exists.
If \eqref{hyp-ERR-general} holds, then the limit in \eqref{primaformulaGgeneral} exists, it only depends on $\overline{t}$ and $\beta\in(0,1)$, and 
\[
\mathcal{G}_\beta(\overline{t}) = - \frac{\dot{\mathcal{T}}(\overline{t}^+;\overline{\rho},\overline{w})}{\frac{d}{dt} |E_{\rho(t)}| \big|_{t=\overline{t}^+}} 
= - \frac1\beta \, \frac{\dot{\mathcal{T}}(\overline{t}^+;\overline{\rho},\overline{w})}{\frac{d}{d\rho} |E_{\rho}| \big|_{\rho=\rho(\overline{t})} }.
\]
\end{prop}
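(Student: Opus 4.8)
The plan is to reduce the limit defining $\mathcal{G}_\beta(\overline{t})$ to the already-known right derivative $\dot{\mathcal{T}}(\overline{t}^+;\overline{\rho},\overline{w})$ by showing that the denominator $|E_{\rho(t)}\setminus E_{\rho(\overline{t})}|$ is, to first order in $t-\overline{t}$, a smooth multiple of $t-\overline{t}$ with nonzero slope, and then invoking l'Hôpital-type cancellation. First I would write the measure of the annular region explicitly using the coarea formula: since $\nabla f\neq0$ on $E_{\rho(T)}$ by \eqref{hyp-ERR-general}, the set $E_{\rho}=\{R-\rho\le f(x)\le R\}$ has measure
\[
|E_{\rho}| = \int_{R-\rho}^{R} \left( \int_{\{f=s\}} \frac{1}{|\nabla f(x)|} \,d\mathcal{H}^{1}(x)\right) ds =: \int_{R-\rho}^{R} g(s)\,ds,
\]
where $g$ is continuous and strictly positive on the relevant range of $s$ (again by \eqref{hyp-ERR-general}). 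Hence $\rho\mapsto|E_\rho|$ is of class $C^1$ with $\frac{d}{d\rho}|E_\rho| = g(R-\rho) > 0$, and in particular $\frac{d}{d\rho}|E_\rho|\big|_{\rho=\rho(\overline{t})}\neq0$. In our radial case $f(x)=|x|$ one has $g(s)=2\pi s$, so $\frac{d}{d\rho}|E_\rho| = 2\pi(R-\rho)$ and the formulas reduce to the ones used implicitly elsewhere in the paper; but the argument works for any admissible $f$.

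Next I would compute $|E_{\rho(t)}\setminus E_{\rho(\overline{t})}|$ for $t>\overline{t}$. Since $\rho$ is nondecreasing and $\overline{\rho}$ extends $\rho|_{[0,\overline{t}]}$ with $\dot{\overline{\rho}}$ having right-mean-value $\beta$ at $\overline{t}$ (condition \eqref{condoverrhow}), the region $E_{\rho(t)}\setminus E_{\rho(\overline{t})}$ is exactly $\{R-\rho(t)\le f(x) < R-\rho(\overline{t})\}$, whose measure is $\int_{R-\rho(t)}^{R-\rho(\overline{t})} g(s)\,ds$. Therefore
\[
|E_{\rho(t)}\setminus E_{\rho(\overline{t})}| = |E_{\rho(t)}| - |E_{\rho(\overline{t})}|,
\]
and by the $C^1$ regularity of $\rho\mapsto|E_\rho|$ together with $\rho(t)-\rho(\overline{t}) = \beta(t-\overline{t}) + o(t-\overline{t})$ (which follows from the first limit in \eqref{condoverrhow} upon integrating), we get
\[
|E_{\rho(t)}\setminus E_{\rho(\overline{t})}| = \beta\, g(R-\rho(\overline{t}))\,(t-\overline{t}) + o(t-\overline{t}) = \frac{d}{d\rho}|E_\rho|\Big|_{\rho=\rho(\overline{t})}\cdot\beta\,(t-\overline{t}) + o(t-\overline{t}).
\]
The quantity $\frac{d}{dt}|E_{\rho(t)}|\big|_{t=\overline{t}^+}$ is, by definition and the above, equal to $\beta\,g(R-\rho(\overline{t}))$, which identifies the two forms of the denominator in the statement.

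Finally I would combine the pieces: writing the difference quotient in \eqref{primaformulaGgeneral} as
\[
-\frac{\mathcal{T}(t;\overline{\rho},\overline{w})-\mathcal{T}(\overline{t};\overline{\rho},\overline{w})}{t-\overline{t}} \cdot \frac{t-\overline{t}}{|E_{\rho(t)}\setminus E_{\rho(\overline{t})}|},
\]
the first factor tends to $-\dot{\mathcal{T}}(\overline{t}^+;\overline{\rho},\overline{w})$ by hypothesis, while the second tends to $\big(\beta\,g(R-\rho(\overline{t}))\big)^{-1} = \big(\frac{d}{dt}|E_{\rho(t)}|\big|_{t=\overline{t}^+}\big)^{-1}$ by the expansion just derived; the product is finite since the denominator limit is nonzero. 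This gives both displayed expressions for $\mathcal{G}_\beta(\overline{t})$, and shows that the value depends only on $\overline{t}$ and on $\beta$ (through $\dot{\mathcal{T}}(\overline{t}^+;\overline{\rho},\overline{w})$, which by Theorem \ref{teoderivT} depends on $\overline{\rho}$ only via $\beta$, and on $\overline{w}$ only via its frozen value $w(\overline{t})$, itself determined by $\overline{t}$). The main obstacle is essentially bookkeeping: one must make sure the error term $o(t-\overline{t})$ in $|E_{\rho(t)}\setminus E_{\rho(\overline{t})}|$ is genuinely $o(t-\overline{t})$ and not merely $O(t-\overline{t})$ — this is where the first condition in \eqref{condoverrhow} (convergence of the average of $\dot{\overline\rho}$ to $\beta$, not merely boundedness) is used, via $\rho(t)-\rho(\overline t)=\int_{\overline t}^t\dot{\overline\rho}(s)\,ds=\beta(t-\overline t)+o(t-\overline t)$ — and one must handle the degenerate-looking case $\beta\to0$ by noting that the statement only claims $\beta\in(0,1)$, so the denominator is strictly positive and no further care is needed.
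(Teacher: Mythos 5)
Your proposal is correct and follows essentially the same route as the paper's proof: the coarea formula gives $|E_\rho|=\int_{R-\rho}^{R}\bigl(\int_{\{f=s\}}|\nabla f|^{-1}\,d\mathcal{H}^1\bigr)ds$, whence $\frac{d}{d\rho}|E_\rho|$ exists and is positive, and the chain rule with $\dot\rho(\overline{t}^+)=\beta$ (via \eqref{condoverrhow}) identifies the right derivative of $t\mapsto|E_{\rho(t)}|$, so the limit in \eqref{primaformulaGgeneral} is the quotient of the two right derivatives. Your extra bookkeeping on the $o(t-\overline{t})$ error and the factorization of the difference quotient just makes explicit what the paper leaves implicit.
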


\begin{proof}
We need to show that the right derivative of $t\mapsto |E_{\rho(t)}| $ exists at $t=\overline{t}^+$. Indeed, 
	the coarea formula 
	\begin{equation*}
	\iint_{\R^2} g(x) |\nabla f(x)| \,dx = \int_{\R} \left( \int_{\{f=s\}} g(y) \, d\mathcal{H}^1(y) \right) \,ds
	\end{equation*}
	for the function $g(y)= \frac{\chi_{E_{\rho}}(y)}{|\nabla f(y)|}$ implies that 
	\begin{equation*}
	|E_{\rho}| = \int_{R-\rho}^{R} \left( \int_{\{f=s\}} \frac{1}{|\nabla f(y)|} \, d\mathcal{H}^1(y)  \right) \, ds.
	\end{equation*}
	Hence, recalling \eqref{hyp-ERR-general} we obtain the key formula
	\begin{equation*}
	\frac{d}{d\rho} |E_{\rho}| \bigg|_{\rho=\rho(\overline{t})} = \int_{\{f= R-\rho(\overline{t})\} } \frac{1}{|\nabla f(y)|} \, d\mathcal{H}^1(y).
	\end{equation*}
Clearly we have
	\begin{equation*}
	\frac{d}{dt} |E_{\rho(t)}| \bigg|_{t=\overline{t}^+ } = \dot{\rho}(\overline{t}^+)  \, \frac{d}{d\rho} |E_{\rho}| \bigg|_{\rho=\rho(\overline{t})} ,
	\end{equation*}
which only depends on $\overline{t}$ and on the right derivative $\dot{\rho}(\overline{t}^+)=\beta>0$. 
\end{proof}

We remark that our definition of energy release rate applies to the case already  studied in \cite{DouMarCha08,LBDM12,DMLazNar16,RiNa2020},
that is the case of a strip $(0,R)\times(0,1)$ with debonding front $x_1=R{-}\rho(t)$ and debonded region $(R{-}\rho(t),R)\times(0,1)$. 
In this setting one may take $f(x)=x_1$. 

In the radial case studied in this paper, 
one has $f(x)= |x|$, $|E_{\rho}|= \pi (R^2 - (R{-}\rho)^2)$, and
	\begin{equation*}
	\frac{d}{d\rho} |E_{\rho}| = 2\pi (R-\rho), \quad \quad \frac{d}{dt} |E_{\rho(t)}| \bigg|_{t= \overline{t}} = 2\pi (R-\rho(\overline{t})) \dot{\rho} (\overline{t}).
	\end{equation*}
Hence, \eqref{primaformulaGgeneral} reduces to 
\begin{equation*}\label{formulaG_beta}
	\mathcal{G}_\beta (\overline{t})= -\frac{1}{2\pi \beta (R{-}\rho(t))} \, \dot{\mathcal{T}} (\overline{t}^+;\overline{\rho},\overline{w}) .
\end{equation*}
This definition has to be completed for $\beta=0$. We do this exploiting the ansatz of radial solutions.

In general, one should verify the following property, which may depend on the representation of the solutions to the wave equation and is instrumental for deriving Griffith's criterion below: for a.e.\ $\overline{t}$ the function
\begin{equation}\label{property:G0}
(0,1)\ni\beta\mapsto\mathcal{G}_\beta(\overline{t}) \ \text{is bounded, continuous, strictly decreasing, and}\ \lim_{\beta\to1^-}\mathcal{G}_\beta(\overline{t})=0 .
\end{equation}
Then one defines
\begin{equation} \label{def:G0}
\mathcal{G}_0(\overline{t}):=\lim_{\beta\to0^+}\mathcal{G}_\beta(\overline{t})>\mathcal{G}_\beta(\overline{t}) \ \text{for every}\ \beta\in(0,1) .
\end{equation}
In the case of radial solutions, \eqref{property:G0} is a consequence of \eqref{formulah}, as shown in the following remark.

\begin{rem}\label{remG}
	By Theorem \ref{teoderivT}, we know that for a.e.\ $\overline{t} \in \left[0,\frac{\rho_0}{2}\right]$ 
	\begin{equation}\label{formulaG_beta*}
	\mathcal{G}_\beta(\overline{t}) = \frac{1}{2(R{-}\rho(\overline{t}))} \, \frac{1{-}\beta}{1 {+} \beta}\,  e^{-\alpha \overline{t}} 
	\left[ \dot{h}_0(\rho(\overline{t}) {-} \overline{t}) - h_1(\rho(\overline{t}) {-} \overline{t})  - \int_{0}^{\overline{t}}  F(\tau , \tau {+} \rho(\overline{t}) {-} \overline{t} )  \,d\tau \right]^2 .
	\end{equation}
	For $\beta=0$, arguing by continuity, for a.e.\ $\overline{t} \in \left[0,\frac{\rho_0}{2}\right]$ we have
	\begin{equation}\label{formulaG_0}
	\mathcal{G}_0(\overline{t}) = \frac{1}{2(R{-}\rho(\overline{t}))}  \,  e^{-\alpha \overline{t}} 
	\left[ \dot{h}_0(\rho(\overline{t}) {-} \overline{t}) - h_1(\rho(\overline{t}) {-} \overline{t})  - \int_{0}^{\overline{t}}  F(\tau , \tau {+} \rho(\overline{t}) {-} \overline{t} )  \,d\tau \right]^2 .
	\end{equation}
	We may extend this also for $\overline{t}>\frac{\rho_0}{2}$ as hinted in Remark \ref{rmk:transl}, obtaining \eqref{formulaG_beta**} for $\overline{t} \in [0,T] $;
	however in the rest of the paper we only need such formulas for small $\overline{t}$.
	In particular, it turns out that 
	\begin{equation}\label{formulaG_beta**}
	\mathcal{G}_\beta(\overline{t}) = \frac{1{-}\beta}{1 {+} \beta} \, \mathcal{G}_0(\overline{t})
	\end{equation}
	for a.e.\ $\overline{t} \in \left[0,T\right]$.
\end{rem}

\subsection{Griffith's criterion}  
The flow rule for the debonding evolution involves a material coefficient depending on the point in the reference configuration, given by
a  bounded measurable radial function $\overline\kappa: B(0,R) \to (0,+\infty)$,
quantifying the toughness of the glue between the film and the substrate.
The amount of energy dissipated during the debonding process in the time interval $(0,t)$, for $t \in [0,T]$, is  
\begin{equation}\label{formulaenergydissk}
\iint_{R-\rho(t)< |x| < R-\rho_0} \overline\kappa(|x|) \,dx = 2\pi \int_{\rho_0}^{\rho(t)} (R{-}r) \, \kappa(r) \,dr = 2\pi \int_{0}^{t} (R{-}\rho(s)) \, \kappa(\rho(s)) \, \dot{\rho}(s) \,ds ,
\end{equation}
 where $\kappa:  [\rho_0,R] \to (0,+\infty)$ is defined by $\kappa(r)=\overline\kappa(x)$ for $|x|=R-r$, cf.\ \eqref{defv}. 

We are now in a position to state the principle governing the evolution of the debonding front. 
Henceforth, the function $\rho$ is unknown and is a solution of the following system, that is a formulation of the classical criterion of Griffith featuring the dynamic energy release rate: 
\begin{equation}\label{Griffithcriterion}
\mbox{for a.e. } t \in [0,T]
\qquad 
\begin{dcases}
0\leq \dot{\rho}(t) < 1,
\\
\mathcal{G}_{\dot{\rho}(t)} (t) \leq \kappa(\rho(t)),
\\
\left[ \mathcal{G}_{\dot{\rho}(t)} (t) - \kappa(\rho(t)) \right] \dot{\rho}(t) =0 .
\end{dcases} 
\end{equation} 
Griffith's criterion requires that: the debonded region is set nondecreasing in time; the energy release rate is less than or equal to the toughness at the debonding front;
the debonded region may actually increase only if the energy release rate is critical.

Moreover, it is possible to see that Griffith's criterion \eqref{Griffithcriterion} is equivalent to postulating an energy-dissipation balance \textbf{(EDP)} and a maximum dissipation principle \textbf{(MDP)}: 
\begin{align*}
&\textbf{(EDP)} &&\mathcal{T} (t) + \iint_{R-\rho(t)< |x| < R-\rho_0} \overline\kappa(|x|) \,dx = \mathcal{T} (0) + \mathcal{W} (t) \quad \mbox{ for every } t \in[0,T], 
\\
&\textbf{(MDP)} &&\dot{\rho}(t)  = \max \{ \beta \in [0,1) \,|\, \kappa(\rho(t)) \beta = \mathcal{G}_\beta(t) \beta \} \quad \mbox{ for every } t \in[0,T], 
\end{align*}
where $\mathcal{W}$ is the work of the external loading.  For $t \in \left[0,\frac{\rho_0}{2}\right]$ it holds
\begin{equation}\label{formulaexternallaodW}
\mathcal{W}(t) = \int_0^t \dot w(s)\,\mathcal{Q}(s,\dot w(s)) \, ds ,
\end{equation}
where $\mathcal{Q}$ is defined in \eqref{formuladerivativeTr} and \eqref{formulapotenza}. 
We refer to \cite[Section 2.2]{DMLazNar16} for a proof of the equivalence between \textbf{(EDP)}--\textbf{(MDP)} and \eqref{Griffithcriterion}.
Here we only observe that one needs \eqref{property:G0} in order to show that \textbf{(MDP)} implies $\mathcal{G}_{\dot{\rho}(t)} (t) \leq \kappa(\rho(t))$ in \eqref{Griffithcriterion}. Indeed, this is trivial if $\dot\rho(t)>0$;
on the other hand, assuming $\kappa(\rho(t))<\mathcal{G}_{0} (t)$ would imply $\kappa(\rho(t))=\mathcal{G}_{\beta} (t)$ for some $\beta\in(0,1)$, which gives a contradiction if $\dot\rho(t)=0$.

Using \eqref{formulaG_beta**} it is easy to see that \eqref{Griffithcriterion} is equivalent to  the following ordinary differential equation:
	\begin{equation}\label{oderho}
	\dot{\rho}(t) = \max \left\{ 0, \ \frac{\mathcal{G}_0(t) - \kappa(\rho(t))}{\mathcal{G}_0(t) + \kappa(\rho(t))} \right\}  \quad \mbox{ for a.e. } t \in [0,T] .
	\end{equation}
 It will be convenient to resort again to the function $h$ defined in \eqref{defh}. By \eqref{formulaG_0} the  differential equation, 
for a.e.\ $t \in [0, \frac{\rho_0}{2}]$, can be rewritten as 
	\begin{equation}\label{oderho*}
	\dot{\rho}(t) = \max \left\{ 0, \ \frac{\left[ \dot{h}_0(\rho(t) {-} t) - h_1(\rho(t) {-} t)  - \int_{0}^{t}  F(\tau , \tau {+} \rho(t) {-} t )  \,d\tau \right]^2 - 2(R{-}\rho(t)) \,e^{\alpha t}\, \kappa(\rho(t))}{\left[ \dot{h}_0(\rho(t) {-} t) - h_1(\rho(t) {-} t)  - \int_{0}^{t}  F(\tau , \tau {+} \rho(t) {-} t )  \,d\tau \right]^2 + 2(R{-}\rho(t))\, e^{\alpha t} \,\kappa(\rho(t))} \right\} .
	\end{equation}
 Notice that the latter equation is strongly coupled with the wave equation in the time-dependent interval $(0,\rho(t))$ due to the term $F$ defined in \eqref{eqkernelFh}. In the following section we shall see how to solve such differential equation and deal with the coupled problem:  this will show that the definition of dynamic energy release rate (Definition \ref{defenergyreleaserate}) and the formulation of Griffith's criterion \eqref{Griffithcriterion} are well posed in the case of radial solutions.

\section{Evolution of the debonding front} \label{sec:coupled}

In this section the function $\rho$ is unknown and it is found as a solution to Griffith's criterion \eqref{Griffithcriterion}. We prove the existence of a pair $(u,\rho)$ which solves the coupled problem \eqref{princeq}--\eqref{Griffithcriterion} (see Definition \ref{def:sol-coupled} below). 
In general, uniqueness only holds among \emph{radial} functions; however, if the data of the problem are sufficiently regular, it turns out that the displacement $u$ is the only solution of the wave equation in the growing domain $\mathcal{C}_{R-\rho(t),R}$: see Remark \ref{rmk:radial-disp} for details.
	
	Recalling the definition of $\kappa$ in \eqref{formulaenergydissk}, we will assume that 
	\begin{equation}  
	\label{assumptionkappa} \mbox{there exist } c_1, c_2 >0 \mbox{ such that } 0< c_1 \leq \kappa (r) \leq c_2 \quad \forall x \in \R^2, \,\rho_0 \leq r < R,
	\end{equation}
	\begin{equation}  
    \label{epsiloncondition}
	\mbox{for every } r \in[\rho_0,R) \mbox{ there exists } \varepsilon=\varepsilon(r)>0 \mbox{ such that } \kappa \in C^{0,1}([r,r+\varepsilon]).
	\end{equation}
	Throughout this section we will assume that the data of the problem $u_0,u_1, w$, originally introduced in \eqref{uinitialcondition}, satisfy the following assumptions,
	\begin{equation}\label{uinitialcondition*}
	w \in C^{0,1}(0,+\infty), \quad u_0 \in C^{0,1}_{\text{rad}}(\mathcal{C}_{R-\rho_0,R}), \quad u_1 \in L^\infty_{\text{rad}}(\mathcal{C}_{R-\rho_0,R}),
	\end{equation}
where the subscript $\textnormal{rad}$ means that the initial data are radial, as above. 
	Then we give the following definition of \emph{radial} solution to the coupled problem \eqref{princeq}--\eqref{Griffithcriterion}.
	
	\begin{defin} \label{def:sol-coupled}
		Let $\rho:[0,T] \to [\rho_0,R)$  be as in \eqref{hprho}.  Let $u: [0,T] \times B(0,R) \to \R$ be a measurable function of class $W^{1,2}$. Define $\mathcal{G}_\beta$ as in \eqref{primaformulaGgeneral} and \eqref{def:G0}. 
		We say that the pair $(u,\rho)$ is a radial solution to the coupled problem \eqref{princeq}--\eqref{Griffithcriterion} if:
		\begin{enumerate}
			\item $u(t,\cdot) \in W^{1,2}_{\mathrm{rad}}(\mathcal{C}_{R-\rho(t),R})$ for every $t \in [0,T]$; 	
			\item u solves problem \eqref{princeq} in $ \mathcal{O}_\rho$,  where $\mathcal{O}_\rho$ is as in \eqref{def:domain-time};
			\item $u\equiv 0$ outside $\overline{\mathcal{O}}_\rho$;
			\item $(u,\rho)$ satisfies Griffith's criterion \eqref{Griffithcriterion} for a.e. $t \in [0,T]$. 
		\end{enumerate}
	\end{defin}

 In the existence proof we resort to the variable $h$ defined in \eqref{defv} and \eqref{defh}.  Hence we reduce to a one-dimensional problem, \eqref{princeqh}--\eqref{oderho},
which is solved using the results of \cite[Section 4]{RiNa2020}. In this section we only mention the parts of the proof which differ from \cite{RiNa2020}. 
    As already done before, we first consider a small time interval $[0,T]$ and assume $T \in \left(0,\frac{\rho_0}{2}\right)$. 
Under this assumption, Griffith's criterion is equivalent to the ordinary differential equation  \eqref{oderho*},  with initial condition $\rho(0)=\rho_0$.
 In order to study such equation, we  introduce an auxiliary function $\lambda$, defined as the inverse of the map $t \mapsto t -\rho(t)$. In particular we have   $\rho(t) = t - \lambda^{-1}(t)$. 
    \begin{rem}\label{rmk:notationsec3}
	From now on, if not otherwise specified, $\rho$ and $\lambda$ will be tacitly related as just  described.  Moreover we will stress the dependence on $\lambda$ (and hence on $\rho$)  by writing  $\Omega^\lambda$, $P^\lambda$, $\h^\lambda$, $\Phi^\lambda[F]$, instead of $\Omega$, $P$, $\h$, and $\Phi[F]$, respectively; cf. \eqref{def:Omega}, \eqref{R(t,r)}, \eqref{eq:undamp}, and \eqref{defmathcalF}. Moreover, for the dynamic energy release rate we will use the notation $\mathcal{G}_0^{h,\lambda}$ instead of $\mathcal{G}_0$; see \eqref{formulaG_beta*} and \eqref{formulaG_0} for the dependence on the solution $h$ given by \eqref{defv} and \eqref{defh}.  
	\end{rem}
    Since $0 \leq \dot{\rho} < 1$ a.e., the function $\lambda$ is absolutely continuous. By \eqref{oderho} and by deriving the relation $\lambda(t{-}\rho(t)) = t$ we get 
    \begin{equation*}
    \begin{dcases}
    \dot{\lambda}(t{-}\rho(t)) = 1 &\quad
    \mbox{if }  \mathcal{G}^{h,\lambda}_0(t) \le \kappa(\rho(t)) , 
    \\
    \dot{\lambda}(t{-}\rho(t)) = \frac{\mathcal{G}^{h,\lambda}_0(t)  +\kappa(\rho(t))}{2\kappa(\rho(t))} &\quad \mbox{if }  \mathcal{G}^{h,\lambda}_0(t) > \kappa(\rho(t)) . 
    \end{dcases}
    \end{equation*}
    Hence,  recalling also \eqref{eqinitialdatav}, \eqref{eqinitialdatah}, and \eqref{formulaG_0},  we obtain that $\lambda$ satisfies the following differential equation:
    \begin{equation}\label{odelambda}
    \begin{dcases}
    \dot{\lambda} (s) = \frac{1}{2} \left( 1+ \max \{\Lambda^{h,\lambda}(s),1 \} \right) \quad & s \in [-\rho_0,\lambda^{-1}(T)],
    \\
    \lambda(-\rho_0) = 0 ,
    \end{dcases}
    \end{equation} 
    where for a.e.\ $s \in [-\rho_0, \lambda^{-1}(T)]$
    \begin{equation}\label{lambdaequation}
    \Lambda^{h,\lambda}(s) := \frac{\left(\dot{h}_0(-s) - h_1(-s) - \int_{0}^{\lambda(s)} F(\tau,\tau{-}s) \, d\tau\right)^2}{2(R{-}\lambda(s) {+} s) \, e^{\alpha\lambda(s)} \, \kappa(\lambda(s) {-} s)}.
    \end{equation}
    We observe that, by assumption \eqref{uinitialcondition*}, $\Lambda^{h,\lambda}$ is $L^{\infty}(-\rho_0, \lambda^{-1}(T))$ which implies, by \eqref{odelambda}, that $\lambda$ is Lipschitz. 
	
	Now, we recast the coupled problem \eqref{princeq}--\eqref{oderho} into a fixed point problem.  Recalling that we have assumed $T \in \left(0,\frac{\rho_0}{2}\right)$, by \eqref{A(t,r)}, \eqref{formulah}, \eqref{defmathcalF}, and \eqref{eqkernelFh}) the coupled problem is equivalent to 
	\begin{equation}\label{coupledproblemhlambda}
	\begin{dcases}
	h(t,r) = \left(\h^{ \lambda}(t,r) + \frac{1}{2} \Phi^{ \lambda}[F](t,r) \right) \chi_{\Omega^{ \lambda}}(t,r) & \mbox{ for a.e. } (t,r) \in (0,T) \times (0,R),
	\\
	\lambda (s) = \frac{1}{2} \int_{-\rho_0}^{s} \left( 1+ \max \{\Lambda^{h,\lambda}(\sigma),1 \} \right) \,d\sigma & \mbox{ for every } s \in [-\rho_0, \lambda^{-1}(T)].
	\end{dcases}
	\end{equation}
 For $y \in (0,\rho_0)$ 
\begin{figure}[b]
 \centering
{\tiny
 \psfrag{r}{$r$}
 \psfrag{t}{$t$}
 \psfrag{T}{$T$}
 \psfrag{L}{$\rho^\lambda$}
 \psfrag{0}{\hspace{.5em}$\rho_0$}
 \psfrag{9}{\hspace{-.7em}$\rho_0{-y}$}
 \includegraphics[width=.6\textwidth]{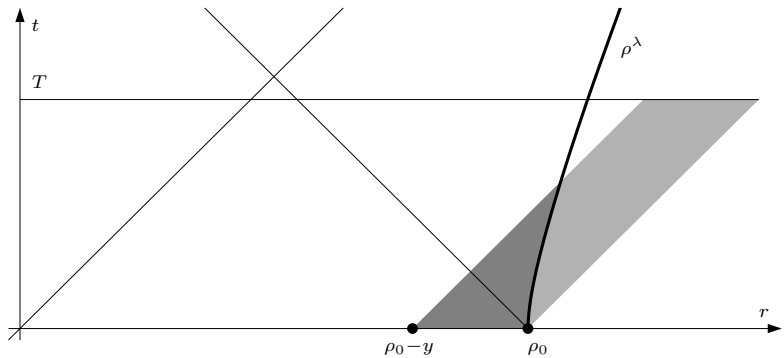}
}
 \caption{Sets defined in formula \eqref{I^yQ^T}: 
$Q^{\lambda,T,y}$ is the region in dark gray, while $Q^{T,y}$ is the union of the regions in dark and light gray.}
 \label{fig:corner}
 \end{figure}
we define the following sets (cf.\ Figure \ref{fig:corner}):
	\begin{equation}\label{I^yQ^T}
	\begin{split}
	I^y &:= [-\rho_0, -\rho_0+y],
	\\
	Q^{T,y}&:= \{ (t,r) \,|\, 0\leq t\leq T ,\, \rho_0 -y+t \leq r \leq \rho_0+t\},
	\\
	Q^{\lambda, T,y}&:= Q^{T,y} \cap \overline{\Omega}^{\lambda}.
	\end{split}
	\end{equation} 
	Moreover, for $M>0$ we introduce the following spaces:
	\begin{equation*}
	\begin{split}
	\mathcal{X}_1(M,T,y) &:= \{ h \in C^0(Q^{T,y}) \,|\, \|h\|_{C^0(Q^{T,y})}\leq M \},
	\\
	\mathcal{X}_2(T,y) &:= \{ \lambda \in C^0(I^y) \,|\, \lambda(-\rho_0)=0,\, \|\lambda\|_{C^0(I^y)} \leq T, \, s \mapsto \lambda(s){-}s \mbox{ is nondecreasing}  \},
	\\
	\mathcal{X}(M,T,y) &:= \mathcal{X}_1(M,T,y) \times \mathcal{X}_2(T,y).
	\end{split}
	\end{equation*} 
	Then, for $(h,\lambda) \in \mathcal{X}(M,T,y)$, we consider the following two operators associated with the coupled problem \eqref{coupledproblemhlambda}, and defined by
	\begin{align}
	\Psi_1[h,\lambda] (t,r) &:= \left(\h^{\lambda}(t,r) + \frac{1}{2} \Phi^{\lambda}[F](t,r) \right) \chi_{\Omega^{\lambda}}(t,r) && \text{for}\ (t,r) \in Q^{T,y},  \label{psi1equation}
	\\ 
	\Psi_2[h,\lambda] (s) &:= \frac{1}{2} \int_{-\rho_0}^{s} \left( 1+ \max \{\Lambda^{h,\lambda}(\sigma),1 \} \right) \,d\sigma && \text{for}\ s \in I^{y}\label{psi2equation}.
	\end{align}
	Finally we set 
	\begin{equation}\label{psiequation}
	\Psi[h,\lambda] := (\Psi_1[h,\lambda],\Psi_2[h,\lambda]). 
	\end{equation} 
	Then we have the following result.
	
	\begin{lemma}\label{lemmay}  Assume $T \in \left(0,\frac{\rho_0}{2}\right)$. 
		 For every $M > 0$, there exists $\tilde{y} \in (0,\rho_0)$ such that,  for every $y\in(0,\tilde{y})$, 
 the operator $\Psi$ defined by \eqref{psi1equation}--\eqref{psiequation} maps $\mathcal{X}(M,T,{y})$ into itself.
	\end{lemma}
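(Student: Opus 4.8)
The plan is to verify separately that $\Psi_1$ maps $\mathcal{X}(M,T,y)$ into $\mathcal{X}_1(M,T,y)$ and that $\Psi_2$ maps it into $\mathcal{X}_2(T,y)$, following the scheme of \cite[Section~4]{RiNa2020}; the only genuinely new point is to track the contribution of the nonconstant kernel $\tfrac14(\alpha^2+(R-\sigma)^{-2})$ entering $F$, see \eqref{eqkernelFh}.

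The verification for $\Psi_2$ is elementary. By \eqref{psi2equation} one has $\Psi_2[h,\lambda](-\rho_0)=0$, and, the integrand in \eqref{psi2equation} being $\geq 1$, the function $s\mapsto\Psi_2[h,\lambda](s)-s$ has a.e.\ derivative $\tfrac12(\max\{\Lambda^{h,\lambda},1\}-1)\geq 0$, hence is nondecreasing; in particular $\Psi_2[h,\lambda]$ is Lipschitz and therefore continuous. For $\|\Psi_2[h,\lambda]\|_{C^0(I^y)}\leq T$ one needs a bound on $\Lambda^{h,\lambda}$ that is uniform in $y$ and in $(h,\lambda)\in\mathcal{X}(M,T,y)$. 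This follows from \eqref{lambdaequation}: by \eqref{uinitialcondition*} the functions $\dot h_0$ and $h_1$ lie in $L^\infty$; since $\lambda(s)\leq T$ one has $\bigl|\int_0^{\lambda(s)}F(\tau,\tau-s)\,d\tau\bigr|\leq T\,\|F\|_{C^0(Q^{T,y})}\leq T c_F M$ with $c_F:=\tfrac14(\alpha^2+(R-\rho_0-T)^{-2})$ (using $|h|\leq M$ on $Q^{T,y}$); and the denominator is bounded below by a positive constant depending only on the data, thanks to \eqref{assumptionkappa}, $\alpha\geq 0$, and $\rho^\lambda<R$. Hence $\|\Lambda^{h,\lambda}\|_{L^\infty(I^y)}\leq\bar\Lambda(M)$ with $\bar\Lambda(M)$ depending only on the data and on $M$, so $\|\Psi_2[h,\lambda]\|_{C^0(I^y)}\leq\tfrac{y}{2}(1+\max\{\bar\Lambda(M),1\})$, which is $\leq T$ provided $y\leq 2T/(1+\max\{\bar\Lambda(M),1\})$.

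The core of the argument is the verification for $\Psi_1$. Continuity of $\Psi_1[h,\lambda]$ on $Q^{T,y}$ follows from Lemma \ref{lemmaAandPhi}: on $Q^{\lambda,T,y}=Q^{T,y}\cap\overline{\Omega}^\lambda$ it coincides with $\h^\lambda+\tfrac12\Phi^\lambda[F]$, which is continuous, while it vanishes on $Q^{T,y}\setminus\Omega^\lambda$; the two prescriptions match on $\partial\Omega^\lambda\cap Q^{T,y}$ because $\h^\lambda(t,\rho^\lambda(t))=0$ by the boundary condition in \eqref{eq:undamp} and, since $\psi^{-1}(t+\rho^\lambda(t))=t$ (with $\psi$, $\omega$ built from $\rho^\lambda$), one has $P^\lambda(t,\rho^\lambda(t))=\varnothing$ in \eqref{R(t,r)}, so $\Phi^\lambda[F](t,\rho^\lambda(t))=0$ as well. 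For the bound $\|\Psi_1[h,\lambda]\|_{C^0(Q^{T,y})}\leq M$ the key point is that on the thin strip $Q^{\lambda,T,y}$ near the corner $(0,\rho_0)$ both $\h^\lambda$ and $\Phi^\lambda[F]$ are of order $y$. Indeed every $(t,r)\in Q^{\lambda,T,y}$ satisfies $\rho_0-y\leq r-t\leq\rho^\lambda(t)-t\leq\rho_0$, so $r>t$ and only the cases $\Omega_1'$, $\Omega_3'$ of \eqref{A(t,r)} occur; using the compatibility condition $h_0(\rho_0)=0$ from \eqref{hcompatcondition} and the Lipschitz/$L^\infty$ bounds on $h_0,h_1$ from \eqref{uinitialcondition*}, one obtains $|\h^\lambda(t,r)|\leq C_0\,y$ with $C_0$ depending only on the data; in the case $\Omega_3'$ one uses that the reflected endpoint $-\omega(r+t)$ differs from $r-t$ by exactly $2(t-s)$, where $s:=\psi^{-1}(r+t)$, and that $t-s\leq\psi(t)-\psi(s)=\rho^\lambda(t)-r\leq y$ since $\dot\psi\geq 1$. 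For the kernel term, one first checks that for $(t,r)\in Q^{\lambda,T,y}$ the set $P^\lambda(t,r)$ is contained in $Q^{T,y}$ (so that $\Phi^\lambda[F]$ is indeed well defined on $\mathcal{X}_1(M,T,y)$) and that its $\sigma$-section at each level has length at most $2(t-s)\leq 2y$, whence $|P^\lambda(t,r)|\leq 2Ty$ and $|\Phi^\lambda[F](t,r)|\leq\|F\|_{C^0(Q^{T,y})}\,|P^\lambda(t,r)|\leq 2c_F M T y$. Altogether $|\Psi_1[h,\lambda]|\leq(2C_0+c_F M T)\,y$ on $Q^{T,y}$, which is $\leq M$ as soon as $y\leq M/(2C_0+c_F M T)$.

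Choosing $\tilde y$ to be the minimum of $\rho_0$ and of the two thresholds found above then yields the claim. I expect the main difficulty to be exactly this last step: establishing the $O(y)$ smallness of $\h^\lambda$ and of $|P^\lambda(t,r)|$ near the corner \emph{uniformly in} $\lambda\in\mathcal{X}_2(T,y)$. It works because all the relevant constants --- the Lipschitz constant of $h_0$, the $L^\infty$ norm of $h_1$, and the bound $0\leq\dot\rho^\lambda\leq 1$, hence $1\leq\dot\psi\leq 2$ --- are fixed by the data and independent of $\lambda$, and because the two geometric facts $P^\lambda(t,r)\subseteq Q^{T,y}$ and $|P^\lambda(t,r)|\leq 2Ty$ follow by comparing the dependence cone $C(t,r)$ with the sub-cone $C(s,\rho^\lambda(s))$ removed in \eqref{R(t,r)}, through the identity $r+t=s+\rho^\lambda(s)$.
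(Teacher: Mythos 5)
Your proposal is correct and follows essentially the same route as the paper: reduce the claim to the two bounds $\|\Psi_1[h,\lambda]\|_{C^0(Q^{T,y})}\leq M$ and $\Psi_2[h,\lambda](-\rho_0+y)\leq T$, obtain the first from the smallness of $\h^\lambda$ near the corner (via the compatibility condition and the representation \eqref{A(t,r)} on $\Omega_1'\cup\Omega_3'$) together with an area bound on $P^\lambda(t,r)$ of order $Ty$, and the second from a bound on $\Lambda^{h,\lambda}$, then take $y$ small. The only differences are cosmetic: you use Lipschitz/$L^\infty$ estimates where the paper uses integral quantities vanishing as $y\to0^+$, and you spell out the inclusion $P^\lambda(t,r)\subseteq Q^{T,y}$ and the bound $|P^\lambda(t,r)|\leq 2Ty$, which the paper invokes implicitly through $|Q^{\lambda,T,y}|\leq Ty$.
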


	\begin{proof}
		Fix $M>0$ and let $ y \in (0,\rho_0)$. Consider $(h,\lambda) \in \mathcal{X}(M,T, y)$. By Lemma \ref{lemmaAandPhi}, we know that $\Psi_1[h,\lambda]$ is continuous on $Q^{T, y}$, while, by construction, $\Psi_2[h,\lambda]$ is absolutely continuous on $ I^{y}$, being $\Lambda^{h,\lambda} \in L^{\infty}(-\rho_0,\lambda^{-1}(T))$. Moreover, 
		\begin{equation*}
		\Psi_2[h,\lambda] (-\rho_0)=0 \quad\mbox{ and }\quad \frac{d}{dy} \Psi_2[h,\lambda] (s) \geq 1 \quad \mbox{ for a.e. } s \in  I^{y}.
		\end{equation*}
		Hence, it is enough to find $ y\in (0,\rho_0)$ such that 
		\begin{equation*}
			\|\Psi_1[h,\lambda]\|_{ C^0(Q^{T,y})} \leq M \quad\mbox{ and }\quad \Psi_2[h,\lambda] (-\rho_0 + y) \leq T.
		\end{equation*}
		So let us consider $(t,r) \in  Q^{\lambda,T,y}$ and estimate $\Psi_1[h,\lambda] (t,r)$ as follows:
		\begin{equation}\label{psi1bound} 
		\begin{split}
		|\Psi_1[h,\lambda] (t,r)| & \leq  |\h^{\lambda}(t,r)| + \frac{1}{2} \, |\Phi^{\lambda}[F](t,r)|
		\\
		&\leq |\h^{\lambda}(t,r)| + \frac{1}{8} \left| \iint_{P^{\lambda}(t,r)} \left( \alpha^2 + \frac{1}{(R{-}\sigma)^2} \right) h(\tau,\sigma)  \,d\tau d\sigma \right|
		\\
		&\leq \int_{\rho_0 - { y}}^{\rho_0} (|\dot{h}_0(s)| + |h_1(s)|) \,ds + \frac{1}{8} \left( \alpha^2 + \frac{1}{(R{-}\rho^\lambda(T))^2} \right)  M T y\, ,
		\end{split}
		\end{equation}
		where we have used \eqref{A(t,r)} (observe that $ Q^{\lambda,T,y} \subset \Omega_1' \cup \Omega_3'$) and the fact that $  |Q^{\lambda,T,y}| \leq T y$. For $\Psi_2[h,\lambda]  (-\rho_0+y)$ we proceed as follows:
		\begin{equation*}
		\begin{split}
		\Psi_2[h,\lambda] (-\rho_0+y) & =  \frac{1}{2} \int_{-\rho_0}^{-\rho_0+y} \left( 1+ \max \{\Lambda^{h,\lambda}(\sigma),1 \} \right) \,d\sigma
		\\
		&\leq \frac{1}{2} \int_{-\rho_0}^{-\rho_0+y} \left( 2 + \Lambda^{h,\lambda}(\sigma) \right) \,d\sigma \leq { y} + \frac{1}{2} \int_{-\rho_0}^{-\rho_0+y} \Lambda^{h,\lambda}(\sigma) \,d\sigma
		\\
		& \leq { y} + \frac{1}{2}  \int_{-\rho_0}^{-\rho_0+y} \frac{\left(\dot{h}_0(-\sigma) - h_1(-\sigma) - \int_{0}^{\lambda(\sigma)} F(\tau,\tau{-}\sigma) \, d\tau\right)^2}{2(R{-}\lambda(\sigma) {+} \sigma) \, e^{\alpha\lambda(\sigma)} \,\kappa(\lambda(\sigma) {-} \sigma)} \, d\sigma
		\\
		& \leq { y} + \frac{1}{2}  \int_{-\rho_0}^{-\rho_0+y} \frac{\left(\dot{h}_0(-\sigma) - h_1(-\sigma) \right)^2 + \left(\int_{0}^{\lambda(\sigma)} F(\tau,\tau{-}\sigma) \, d\tau\right)^2}{c_1(R{-}c_2{-}\rho_0)} \, d\sigma.
		\end{split}
		\end{equation*} 
		Then plugging the definition of $F$, cf. \eqref{eqkernelFh}, and recalling that $ |Q^{\lambda,T,y}| \leq T y$, we obtain
		\begin{equation}\label{psi2bound}
		\begin{split}
		0\le \Psi_2[h,\lambda] (-\rho_0+y)
		&\leq { y} + \frac{1}{2 c_1(R{-}c_2{-}\rho_0)} \int_{-\rho_0}^{-\rho_0+y}  \left(\dot{h}_0(-\sigma) - h_1(-\sigma) \right)^2 \, d\sigma
		\\
		& \quad +  \frac{1}{32 c_1(R{-}c_2{-}\rho_0)} M^2 T^2 \left( \alpha^2 + \frac{1}{(R{-}  c_2 {-} \rho_0)^2} \right)^2 { y}\,.
		\end{split}
		\end{equation}
		Then we conclude since both the expressions on the right-hand side of \eqref{psi1bound} and \eqref{psi2bound} go to $0$ as $ y \to 0^+$.
	\end{proof}
	
	\begin{lemma}\label{lemmaequi} Assume $T \in \left(0,\frac{\rho_0}{2}\right)$. 
Fix $M>0$ and let $\tilde{y} \in (0,\rho_0)$ be given by Lemma \ref{lemmay}. Then,  for every $y\in(0,\tilde{y})$, $\Psi_1(\mathcal{X}(M,T,{y}))$ is an equicontinuous family of $\mathcal{X}_1(M,T,{y})$.
	\end{lemma}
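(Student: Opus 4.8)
The plan is to show that $\Psi_1(\mathcal{X}(M,T,y))$ is equicontinuous by separately controlling the modulus of continuity of the two summands $\h^\lambda$ and $\tfrac12\Phi^\lambda[F]$ appearing in \eqref{psi1equation}, uniformly in $(h,\lambda)\in\mathcal{X}(M,T,y)$. The key point is that these objects depend on the pair only through $\lambda$ (and through $h$ only inside $F$, where $h$ enters boundedly by $\|h\|_{C^0}\le M$), so the estimates should be uniform once one exploits the structure. First I would recall from \cite{DMLazNar16} and \cite[Lemmas~1.10--1.11]{RiNa2020}, which underlie Lemma \ref{lemmaAandPhi}, that $\h^\lambda$ and $\Phi^\lambda[F]$ are continuous on $\overline{\Omega}^\lambda$ with modulus of continuity controlled by the data $h_0,h_1,z$ and by the $L^\infty$-norm of the integrand; since here $y$ is small, on $Q^{\lambda,T,y}$ only the cases $(t,r)\in\Omega_1'\cup\Omega_3'$ occur (as already used in Lemma \ref{lemmay}), where the explicit d'Alembert formula \eqref{A(t,r)} applies.

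The main steps: (1) For the term $\h^\lambda$, use the representation \eqref{A(t,r)} restricted to $\Omega_1'\cup\Omega_3'$: here $\h^\lambda$ is an explicit combination of $h_0$ evaluated at characteristic arguments, an integral of $h_1$, and $-\tfrac12 h_0(-\omega^\lambda(r+t))$. Since $h_0\in W^{1,2}(0,\rho_0)\subset C^{0,1/2}$ and $h_1\in L^2$, each piece has a modulus of continuity depending only on $\|\dot h_0\|_{L^2}$, $\|h_1\|_{L^2}$ (for the integral terms) and on the Lipschitz constant of $\omega^\lambda$, which is bounded by $1$ uniformly in $\lambda\in\mathcal{X}_2(T,y)$ since $0\le\dot\omega\le1$. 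Hence the modulus of continuity of $\h^\lambda$ on $Q^{\lambda,T,y}$ is uniform in $\lambda$. (2) For the term $\Phi^\lambda[F](t,r)=\iint_{P^\lambda(t,r)}F(\tau,\sigma)\,d\tau\,d\sigma$, note that $|F|\le\tfrac14(\alpha^2+(R-\rho^\lambda(T))^{-2})M$ on $Q^{\lambda,T,y}$, which is a uniform bound, so it suffices to estimate $|\,|P^\lambda(t,r)\triangle P^\lambda(t',r')|\,|$ (the symmetric difference of the dependence regions) by a uniform modulus of continuity of $(t,r)$. The sets $P^\lambda(t,r)$ in \eqref{R(t,r)} are bounded by characteristic lines of slope $\pm1$ and, in the $\Omega_3'$ case, by the curve $r=\rho^\lambda(\tau)$; since $\rho^\lambda$ has Lipschitz constant $<1$ uniformly and $\omega^\lambda$ is $1$-Lipschitz, the area of the symmetric difference is bounded by $C(|t-t'|+|r-r'|)$ with $C$ depending only on $T$ and the universal bounds. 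Multiplying, $|\Phi^\lambda[F](t,r)-\Phi^\lambda[F](t',r')|\le CM(|t-t'|+|r-r'|)$, again uniform in $(h,\lambda)$. (3) Combining (1) and (2), and recalling that $\Psi_1[h,\lambda]$ is the extension by $0$ of $\h^\lambda+\tfrac12\Phi^\lambda[F]$ outside $\overline\Omega^\lambda$, one gets a single modulus of continuity on $Q^{T,y}$ valid for all $(h,\lambda)\in\mathcal{X}(M,T,y)$; care is needed near $\partial\Omega^\lambda$, but there $\h^\lambda=0$ and $\Phi^\lambda[F]=0$ by definition, so the extension is continuous there with the same control.

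The main obstacle I expect is step (3), namely handling the jump across the free boundary $r=\rho^\lambda(t)$: the naive modulus of continuity of $\Psi_1$ could degenerate if the boundary curve moves with $\lambda$, but this is resolved because both $\h^\lambda$ and $\Phi^\lambda[F]$ vanish continuously as $r\to\rho^\lambda(t)^-$ (the cone $P^\lambda$ shrinks and the boundary data are compatible via \eqref{hcompatcondition}), so the extension by $0$ does not create a jump; one must also check that the uniform $1$-Lipschitz bound on $\omega^\lambda$ (equivalently $\dot\rho^\lambda<1$ a.e., which holds by the constraint in $\mathcal{X}_2$ together with \eqref{hprho}-type bounds inherited through $\Psi_2$) really is available for all competitors, not just the limit. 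Since this is the content of \cite[Section~4]{RiNa2020}, I would state that the argument follows that of \cite{RiNa2020} with the only new feature being the nonconstant kernel $\tfrac14(\alpha^2+(R-\sigma)^{-2})$, which is bounded on $[\,0,\rho^\lambda(T)\,]$ and hence does not affect the uniformity, and the weight $(R-\sigma)^{1/2}$ implicit in the passage between $h$ and $v$, which is likewise smooth and bounded on the relevant range.
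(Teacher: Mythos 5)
Your proposal is correct and follows essentially the same route as the paper: it splits $\Psi_1[h,\lambda]$ into the d'Alembert part $\h^\lambda$ and the integral part $\tfrac12\Phi^\lambda[F]$, controls the latter via the uniform kernel bound together with the estimate $|P^\lambda(t_1,r_1)\triangle P^\lambda(t_2,r_2)|\le C(T+y)\,|(t_1,r_1)-(t_2,r_2)|$, handles the former through the uniform moduli of $h_0,h_1$ and the uniform $1$-Lipschitz bound on $\omega^\lambda$ (the content of the cited \cite[Lemma 4.4]{RiNa2020}, which the paper invokes for the term $J_1$), and reduces the extension by zero to interior points since both terms vanish on the debonding front. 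The only cosmetic remark is that the uniform bound $0\le\dot\omega^\lambda\le1$ comes directly from the monotonicity constraint $s\mapsto\lambda(s)-s$ in $\mathcal{X}_2(T,y)$, not from properties inherited through $\Psi_2$.
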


	\begin{proof}
		 Let $ y\in(0,\tilde{y})$ and $(h,\lambda) \in \mathcal{X}(M,T, y)$ and fix $\varepsilon > 0$. First we observe that 
\[ |P^{\lambda}(t_1,r_1) \triangle P^{\lambda}(t_2,r_2)| \leq C_1 \,\textnormal{Per}(P^{\lambda}(t_1,r_1)) \,|(t_1,r_1)-(t_2,r_2)| \leq  C_2 (T + y) |(t_1,r_1)-(t_2,r_2)|\,, \]
for every $(t_1,r_1), (t_2,r_2) \in Q^{\lambda,T,y}$, where $\textnormal{Per}(P^{\lambda}(t_1,r_1))$ is the perimeter of the set $P^{\lambda}(t_1,r_1)$ and $C_1$ and $C_2$ are two positive constants. 
Moreover, by \eqref{uinitialcondition*} there exists $\delta_1 >0$ such that for every $a,b \in [0,\rho_0]$ satisfying $|a-b| \leq \delta_1$ it holds
			\begin{equation*}
			|h_0(a)-h_0(b)| + \left| \int_{a}^{b} h_1(s)  \,ds \right| \leq \frac{\varepsilon}{2}\,.
			\end{equation*}
		Now define 
		\begin{equation*}
		 \delta := \min \left\{ \frac{\delta_1}{2} , \frac{4 \varepsilon}{M C_2 (T + y) \left( \alpha^2 + \frac{1}{(R - \rho^\lambda(T))^2} \right)} \right\}
		\end{equation*}
		and take $(t_1,r_1), (t_2,r_2) \in  Q^{\lambda,T,  y}$ such that $|(t_1,r_1)-(t_2,r_2)| \leq \delta$. Then,
		\begin{equation*}
		|\Psi_1[h,\lambda](t_1,r_1) - \Psi_1[h,\lambda](t_2,r_2) | \leq J_1 + J_2,
		\end{equation*}
		where
		\begin{align*}
		& J_1 :=  |\h^{\lambda}(t_1,r_1)\chi_{\Omega^{\lambda}}(t_1,r_1) - \h^{\lambda}(t_2,r_2) \chi_{\Omega^{\lambda}}(t_2,r_2) |,
		\\
		& J_2 :=  \frac{1}{2} \left|\Phi^{\lambda}[F](t_1,r_1)  \chi_{\Omega^{\lambda}}(t_1,r_1) - \Phi^{\lambda}[F](t_2,r_2)  \chi_{\Omega^{\lambda}}(t_2,r_2) \right|.
		\end{align*}
		 We now estimate $J_2$. Since $\h^{ \lambda}\chi_{\Omega^{ \lambda}}$ and $\Phi^{ \lambda}[F] \chi_{\Omega^{ \lambda}} $ are continuous on $ Q^{\lambda,T,y}$ and vanish on $ Q^{T,y} \setminus \Omega^{ \lambda}$, it is enough to consider the case in which both $(t_1,r_1)$ and $(t_2,r_2)$ are in $\Omega^{ \lambda}$. Then for $J_2$ we obtain
		\begin{equation*}
		\begin{split}
		J_2 &\leq \frac{1}{8} \int_{P^{ \lambda}(t_1,r_1) \triangle P^{ \lambda}(t_2,r_2)} \left(\alpha^2 + \frac{1}{(R{-}\sigma)^2}\right) |h(\tau,\sigma)| \, d\tau d\sigma 
		\\
		&  \leq \frac{1}{8} \left(\alpha^2 + \frac{1}{(R{-}\rho^\lambda(T))^2}\right) M |P^{ \lambda}(t_1,r_1) \triangle P^{ \lambda}(t_2,r_2)|
		\\
		&  \leq \frac{1}{8} \left(\alpha^2 + \frac{1}{(R{-}\rho^\lambda(T))^2}\right) M C_2 (T + y) |(t_1,r_1)-(t_2,r_2)|
		\\
		& \leq \frac{1}{8} \left(\alpha^2 + \frac{1}{(R{-}\rho^\lambda(T))^2}\right)  M C_2 (T + y) \,\delta \leq \frac{\varepsilon}{2} .
		\end{split}
		\end{equation*}
	For the estimate of $J_1$, one proceeds in the same way as in the proof of \cite[Lemma 4.4]{RiNa2020}, obtaining $J_1 < \frac{\varepsilon}{2}$. Hence, the above estimates for $J_1$ and $J_2$ yield to 
	\begin{equation*}
	|\Psi_1[h,\lambda](t_1,r_1) - \Psi_1[h,\lambda](t_2,r_2) | \leq \varepsilon
	\end{equation*}
	and the lemma is proven.
	\end{proof}

Fix $M > 0$ and  let $y \in (0,\tilde{y})$ with $\tilde{y}$ be given by Lemma \ref{lemmay}. Then, consider the following space: 
\begin{equation*}
\mathcal{Z}(M,T,y) := \mathrm{cl}_{\,C^0} \left(\Psi_1(\mathcal{X}(M,T,y)) \right) \times \mathcal{X}_2(T,y) ,
\end{equation*}
endowed with the distance defined as 
\begin{equation*}
d((h^{(1)},\lambda^{(1)}),(h^{(2)},\lambda^{(2)})) := \max \left\{ \|h^{(1)}-h^{(2)}\|_{ L^2(Q^{T,y})} , \|\lambda^{(1)} - \lambda^{(2)}\|_{ C^0 (I^{y})} \right\}
\end{equation*}
for every $(h^{(1)},\lambda^{(1)}),(h^{(2)},\lambda^{(2)}) \in  \mathcal{Z}(M,T,y)$. Then by Lemma \ref{lemmaequi} and by the Ascoli-Arzel\`a Theorem, $(\mathcal{Z}(M,T,y),d)$ is a complete metric space. Hence we have the following result.
 In the proof we show that we can reduce to the setting of \cite{RiNa2020}, where a corresponding result is proven for the kernel $F=h$; after seeing this, the proof then follows from \cite[Proposition 4.5]{RiNa2020}.

\begin{prop}\label{propcontractionZ}
Assume $T \in \left(0,\frac{\rho_0}{2}\right)$.
In addition to \eqref{assumptionkappa}--\eqref{epsiloncondition}, assume that $\kappa \in C^{0,1}([\rho_0,R))$.
Then for every $M>0$ there exists $ {y} \in (0,\rho_0)$ such that the operator $\Psi$ is a contraction from the $(\mathcal{Z}(M,T,{y}),d)$ into itself. 
\end{prop}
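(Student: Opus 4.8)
The plan is to show that $\Psi=(\Psi_1,\Psi_2)$ is a contraction on $(\mathcal{Z}(M,T,y),d)$ for $y$ sufficiently small, by estimating the two components separately and then reducing the estimate for $\Psi_1$ to the situation already treated in \cite[Proposition 4.5]{RiNa2020}. First I would observe that $\mathcal{Z}(M,T,y)$ is a complete metric space (as noted before the statement, via Lemma~\ref{lemmaequi} and Ascoli--Arzel\`a), and that $\Psi$ maps it into itself: this follows from Lemma~\ref{lemmay} together with the fact that the closure $\mathrm{cl}_{C^0}(\Psi_1(\mathcal{X}(M,T,y)))$ is taken precisely to make the first component stable. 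So the only thing left is the contraction estimate.

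For the second component I would fix $(h^{(1)},\lambda^{(1)}),(h^{(2)},\lambda^{(2)})\in\mathcal{Z}(M,T,y)$ and estimate $\|\Psi_2[h^{(1)},\lambda^{(1)}]-\Psi_2[h^{(2)},\lambda^{(2)}]\|_{C^0(I^y)}$ by integrating $|\max\{\Lambda^{h^{(1)},\lambda^{(1)}},1\}-\max\{\Lambda^{h^{(2)},\lambda^{(2)}},1\}|$, which is bounded by $|\Lambda^{h^{(1)},\lambda^{(1)}}-\Lambda^{h^{(2)},\lambda^{(2)}}|$. Expanding the definition \eqref{lambdaequation}, the difference of the numerators $\big(\dot h_0(-s)-h_1(-s)-\int_0^{\lambda(s)}F(\tau,\tau-s)\,d\tau\big)^2$ is controlled using the uniform bound $M$ on $h$ and the Lipschitz dependence of the domain of integration on $\lambda$, while the difference of the denominators $2(R-\lambda(s)+s)e^{\alpha\lambda(s)}\kappa(\lambda(s)-s)$ is Lipschitz in $\lambda$ thanks to \eqref{assumptionkappa} and the assumption $\kappa\in C^{0,1}([\rho_0,R))$. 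This gives a bound of the form $C(M,T)\,y\cdot d((h^{(1)},\lambda^{(1)}),(h^{(2)},\lambda^{(2)}))$, with the factor $y$ coming from the length of $I^y$; the presence of the nonconstant kernel $\frac14(\alpha^2+(R-\sigma)^{-2})$ in $F$ only changes the constant, since on $Q^{\lambda,T,y}$ one has $R-\sigma$ bounded away from $0$ by a constant depending on $R,\rho_0,c_2$, exactly as in Lemma~\ref{lemmay}.

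For the first component, the argument of \cite[Proposition 4.5]{RiNa2020} is written for the kernel $F=h$; to apply it here I would point out that replacing $h(\tau,\sigma)$ by $\frac14\big(\alpha^2+(R-\sigma)^{-2}\big)h(\tau,\sigma)$ in the definition of $\Phi^\lambda[F]$ only introduces the bounded, Lipschitz weight $\sigma\mapsto\frac14(\alpha^2+(R-\sigma)^{-2})$, which is uniformly bounded and uniformly Lipschitz on the relevant range of $\sigma$ (again since $R-\sigma\ge R-c_2-\rho_0>0$ there). Hence every estimate in \cite[Proposition 4.5]{RiNa2020} — the Lipschitz dependence of $\h^\lambda\chi_{\Omega^\lambda}$ and $\Phi^\lambda[F]\chi_{\Omega^\lambda}$ on $\lambda$ in the $L^2(Q^{T,y})$ norm, and the Lipschitz dependence of $\Phi^\lambda[F]$ on $h$ — carries over verbatim up to constants, producing a bound $C(M,T)\,\omega(y)\,d((h^{(1)},\lambda^{(1)}),(h^{(2)},\lambda^{(2)}))$ with $\omega(y)\to0$ as $y\to0^+$.

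Combining the two estimates, one obtains $d(\Psi[h^{(1)},\lambda^{(1)}],\Psi[h^{(2)},\lambda^{(2)}])\le C(M,T)\,\nu(y)\,d((h^{(1)},\lambda^{(1)}),(h^{(2)},\lambda^{(2)}))$ for some $\nu(y)\to0^+$, so choosing $y\in(0,\rho_0)$ small enough (and $\le\tilde y$ from Lemma~\ref{lemmay}) makes $C(M,T)\nu(y)<1$, which proves the claim. The main obstacle I anticipate is the $\lambda$-dependence of the set $P^\lambda(t,r)$ and of $\Omega^\lambda$: controlling $\|\Psi_1[h^{(1)},\lambda^{(1)}]-\Psi_1[h^{(2)},\lambda^{(2)}]\|_{L^2(Q^{T,y})}$ requires estimating how $\h^\lambda$ and the domain of integration in $\Phi^\lambda[F]$ move as $\lambda$ varies, which is the technically delicate part of \cite[Proposition 4.5]{RiNa2020} and the reason the $L^2$ (rather than $C^0$) norm is used for the first component in the definition of $d$; here it is essential that this step is imported wholesale, with only the harmless weight $\frac14(\alpha^2+(R-\sigma)^{-2})$ added.
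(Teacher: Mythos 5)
Your proposal is correct and follows essentially the same route as the paper: separate Lipschitz estimates for $\Psi_2$ (via the difference of the $\Lambda$ terms, with denominators bounded below by \eqref{assumptionkappa} and controlled through the Lipschitz continuity of $\kappa$) and for $\Psi_1$ (splitting off the $\h^\lambda\chi_{\Omega^\lambda}$ and $\Phi^\lambda[F]\chi_{\Omega^\lambda}$ terms), in both cases reducing to the estimates of \cite[Proposition 4.5]{RiNa2020} by exploiting that the kernel $\frac14\bigl(\alpha^2+(R-\sigma)^{-2}\bigr)$ is bounded on the relevant domains, and then taking $y$ small. The only cosmetic difference is that you invoke Lipschitz continuity of the kernel, whereas the paper (like your own parenthetical remark) only needs its boundedness, since the kernel is always evaluated at the same point $\sigma$ in the differences.
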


\begin{proof}
	From now on, we will denote by $C$ a constant that may change from line to line. Fix $(h^{(1)},\lambda^{(1)}), (h^{(2)},\lambda^{(2)}) \in \mathcal{Z}(M,T,{y})$,
 where ${y}$ will be chosen later on. By now we require that $y\in(0,\tilde{y})$, where $\tilde{y}$ is given by Lemma \ref{lemmay}. 
	
\paragraph{\bf Lipschitz  estimate  on $\Psi_2$.}
	For a.e. $ s \in I^{y}$ we introduce the functions
	\begin{equation*}
		j(s) := |\dot{h}_0(-s)| + |h_1(-s)| + 1
	\end{equation*}
	and
	\begin{equation*}
		J[h^{(i)},\lambda^{(i)}] (s) := \dot{h}_0(-s) - h_1(-s) - \int_{0}^{\lambda^{(i)}(s)} F^{(i)} (\tau, \tau {-} s) \, d\tau ,
	\end{equation*}
	where, for $i=1,2$, the functions $F^{(i)}$ are defined as in \eqref{eqkernelFh} using the functions $h^{(i)}$, respectively. First we observe that, by \eqref{uinitialcondition*}, $j \in L^2(-\rho_0,0)$ and there exists a positive constant $C$ such that 
	\begin{equation}\label{J<j}
		|J[h^{(i)},\lambda^{(i)}](s)| \leq C j(s) \quad \mbox{for a.e. } s \in  I^{y}.
	\end{equation}
	Then, recalling \eqref{lambdaequation} and \eqref{psi2equation} we have the following estimate:
	\begin{equation*} 
		\begin{split}
			&\|\Psi_2[h^{(1)},\lambda^{(1)}] - \Psi_2[h^{(2)},\lambda^{(2)}] \|_{C^0(I^{y})} \leq \frac{1}{2} \int_{-\rho_0}^{-\rho_0 + y} \left| \Lambda^{h^{(1)},\lambda^{(1)}}(\sigma) - \Lambda^{h^{(2)},\lambda^{(2)}]}(\sigma) \right| \,d\sigma
			\\
			&\leq \frac{1}{2} \int_{-\rho_0}^{-\rho_0 + y}
			\left|\frac{(J[h^{(1)},\lambda^{(1)}] (\sigma))^2}{2(R{-}\lambda^{(1)}(\sigma) {+} \sigma)\, e^{\alpha\lambda^{(1)}(\sigma)}\, \kappa(\lambda^{(1)}(\sigma) {-} \sigma)} - \frac{(J[h^{(2)},\lambda^{(2)}] (\sigma))^2}{2(R{-}\lambda^{(2)}(\sigma) {+} \sigma) \,e^{\alpha\lambda^{(2)}(\sigma)} \,\kappa(\lambda^{(2)}(\sigma) {-} \sigma)}\right| \, d\sigma
			\\
			&\leq \frac{1}{2} \int_{-\rho_0}^{-\rho_0 + y}
			\left|\frac{d^{(2)}(\sigma) (J[h^{(1)},\lambda^{(1)}] (\sigma))^2 - d^{(1)}(\sigma) (J[h^{(2)},\lambda^{(2)}] (\sigma))^2}{D(\sigma)}\right| \, d \sigma ,
		\end{split}
	\end{equation*}
	 where for $\sigma \in I^y$  we set 
	\begin{align*}
	d^{(1)}(\sigma) &:= (R{-}\lambda^{(1)}(\sigma) {+} \sigma)\, e^{\alpha\lambda^{(1)}(\sigma)}\, \kappa(\lambda^{(1)}(\sigma) {-} \sigma),
	\\
	d^{(2)}(\sigma)&:= (R{-}\lambda^{(2)}(\sigma) {+} \sigma)\, e^{\alpha\lambda^{(2)}(\sigma)} \,\kappa(\lambda^{(2)}(\sigma) {-} \sigma),
	\\
	D(\sigma) &:= 2 d^{(1)}(\sigma)\, d^{(2)}(\sigma).
	\end{align*}
	We observe that, by the Lipschitz condition on $\lambda^{(1)}$ and $\lambda^{(2)}$ and by assumption \eqref{assumptionkappa}, for a.e. $\sigma \in I^y$ the term $|D(\sigma)|$ is bounded from below by a positive constant, $d_1(\sigma)$ and $d_2(\sigma)$ are bounded from above, and the quantity $|d_1(\sigma)-d_2(\sigma)|$ can be estimated in terms of $|\lambda_1(\sigma) - \lambda_2(\sigma)|$. Hence, by the triangle inequality and by \eqref{J<j} we get
	\begin{equation*}
	\begin{split}
	&\|\Psi_2[h^{(1)},\lambda^{(1)}] - \Psi_2[h^{(2)},\lambda^{(2)}] \|_{C^0(I^{y})}
	\\
	&\leq 
	C \int_{-\rho_0}^{-\rho_0 + y} d_2(\sigma) j(\sigma) \left|J[h^{(1)},\lambda^{(1)}] (\sigma)	 - J[h^{(2)},\lambda^{(2)}] (\sigma)\right|\, d \sigma  
	+ C  \int_{-\rho_0}^{-\rho_0 + y} (j (\sigma))^2 \left|d_1(\sigma) - d_2(\sigma)\right|\, d \sigma
	\\
	&\leq
	C \int_{-\rho_0}^{-\rho_0 + y} d_2(\sigma) j(\sigma)
	\left| \int_{0}^{\lambda^{(1)}(\sigma)} h^{(1)} (\tau, \tau {-} \sigma) \, d\tau - \int_{0}^{\lambda^{(2)}(\sigma)} h^{(2)} (\tau, \tau {-} \sigma) \, d\tau\right|\,d\sigma
	\\
	&
	\quad
	+ C  \int_{-\rho_0}^{-\rho_0 + y} (j (\sigma))^2 \left|\lambda_1(\sigma) - \lambda_2(\sigma)\right|\, d \sigma,
	\end{split}
	\end{equation*}
	where we have used the fact that
	\begin{equation*}
	\begin{split}
	\left|J[h^{(1)},\lambda^{(1)}] (\sigma)	 - J[h^{(2)},\lambda^{(2)}] (\sigma)\right| 
	& \leq 
	\left| \int_{0}^{\lambda^{(1)}(\sigma)} F^{(1)} (\tau, \tau {-} \sigma) \, d\tau - \int_{0}^{\lambda^{(2)}(\sigma)} F^{(2)} (\tau, \tau {-} \sigma) \, d\tau\right|
	\\
	&\leq C \left| \int_{0}^{\lambda^{(1)}(\sigma)} h^{(1)} (\tau, \tau {-} \sigma) \, d\tau - \int_{0}^{\lambda^{(2)}(\sigma)} h^{(2)} (\tau, \tau {-} \sigma) \, d\tau\right|.
	\end{split}
	\end{equation*}
	Then, one can proceed in the same way as in the proof of  \cite[inequality (4.15)]{RiNa2020}  (originally written in the case $F^{(i)}=h^{(i)}$) deducing that, choosing $y$ small enough, one gets:
	\begin{equation*}
		\|\Psi_2[h^{(1)},\lambda^{(1)}] - \Psi_2[h^{(2)},\lambda^{(2)}] \|_{ C^0(I^{y})} \leq \frac{1}{2} \, d((h^{(1)},\lambda^{(1)}),(h^{(2)},\lambda^{(2)})).
	\end{equation*}
\paragraph{\bf Lipschitz estimate on $\Psi_1$.} 
	By the triangle inequality 
	\begin{align*}
	\|\Psi_1[h^{(1)},\lambda^{(1)}] - \Psi_1[h^{(2)},\lambda^{(2)}] \|_{ L^2(Q^{T,y})} &\leq\|\h^{ \lambda^{(1)}} \chi_{\Omega^{ \lambda^{(1)}}} - \h^{ \lambda^{(2)}} \chi_{\Omega^{ \lambda^{(2)}}}\|_{ L^2(Q^{T,y})}
	\\
	& \quad + \frac{1}{2} \left\|\Phi^{ \lambda^{(1)}}[F^{(1)}]  \chi_{\Omega^{ \lambda^{(1)}}} - \Phi^{ \lambda^{(2)}}[F^{(2)}]  \chi_{\Omega^{ \lambda^{(2)}}}\right\|_{ L^2(Q^{T,y})}.
	\end{align*} 
	We proceed estimating the two norms separately. For the first term $\|\h^{ \lambda^{(1)}} \chi_{\Omega^{ \lambda^{(1)}}} - \h^{ \lambda^{(2)}} \chi_{\Omega^{ \lambda^{(2)}}}\|_{ L^2(Q^{T,y})}$
	one can proceed in the same way of the proof  of \cite[inequality (4.20)]{RiNa2020}, obtaining 
	\begin{equation*}
	\|\h^{ \lambda^{(1)}} \chi_{\Omega^{ \lambda^{(1)}}} - \h^{ \lambda^{(2)}} \chi_{\Omega^{ \lambda^{(2)}}}\|_{ L^2(Q^{T,y})} \leq C  \sqrt{y} \|\lambda^{(1)}-\lambda^{(2)}\|_{ C^0(I^{y})}. 
	\end{equation*}
	We detail the estimate of the second term: 
	\begin{align*}
	&\left\|\Phi^{\lambda^{(1)}}[F^{(1)}]  \chi_{\Omega^{\lambda^{(1)}}} - \Phi^{\lambda^{(2)}}[F^{(2)}]  \chi_{\Omega^{\lambda^{(2)}}}\right\|^2_{L^2(Q^{T,y})}
	\\
	&=
	\iint_{ Q^{\lambda^{(1)},T,y} \setminus Q^{\lambda^{(2)},T,y} }
	\left|\iint_{P^{\lambda^{(1)}}(t,r) } \frac{1}{4}\left(\alpha^2 + \frac{1}{(R{-}\sigma)^2}\right) h^{(1)}(\tau,\sigma) \, d\tau d\sigma  \right|^2 \, dr \, dt 
	\\ 
	&\quad + \iint_{ Q^{\lambda^{(2)},T,y} \setminus Q^{\lambda^{(1)},T,y} }
	\left|\iint_{P^{\lambda^{(2)}}(t,r) } \frac{1}{4}\left(\alpha^2 + \frac{1}{(R{-}\sigma)^2}\right) h^{(2)}(\tau,\sigma) \, d\tau d\sigma  \right|^2 \, dr \, dt 
	\\
	& \quad+ \iint_{Q^{\lambda^{(1)},T,y} \cap \, Q^{\lambda^{(2)},T,y} }  
	\left|\iint_{P^{\lambda^{(1)}}(t,r) } F^{(1)}(\tau,\sigma) \, d\tau d\sigma - \iint_{P^{\lambda^{(2)}}(t,r) } F^{(2)}(\tau,\sigma) \, d\tau d\sigma  \right|^2 \, dr \, dt
	\\
	&
	\leq 
	C \iint_{ Q^{\lambda^{(1)},T,y} \setminus Q^{\lambda^{(2)},T,y} }
	\left|\iint_{P^{\lambda^{(1)}}(t,r) } h^{(1)}(\tau,\sigma) \, d\tau d\sigma  \right|^2 \, dr \, dt 
	\\ 
	&\quad +C \iint_{ Q^{\lambda^{(2)},T,y} \setminus Q^{\lambda^{(1)},T,y} }
	\left|\iint_{P^{\lambda^{(2)}}(t,r) }  h^{(2)}(\tau,\sigma) \, d\tau d\sigma  \right|^2 \, dr \, dt 
	\\
	& \quad+C \iint_{Q^{\lambda^{(1)},T,y} \cap \, Q^{\lambda^{(2)},T,y} }  
	\left|\iint_{P^{\lambda^{(1)}}(t,r) } h^{(1)}(\tau,\sigma) \, d\tau d\sigma - \iint_{P^{\lambda^{(2)}}(t,r) } h^{(2)}(\tau,\sigma) \, d\tau d\sigma  \right|^2 \, dr \, dt ,
	\end{align*} 
	where we have used the fact that the kernel $\left(\alpha^2 + \frac{1}{(R-\sigma)^2}\right)$ is bounded by a constant  on each domain of integration. Now the proof proceeds in the same way of the proof of  \cite[inequality (4.22)]{RiNa2020} (where an analogous estimate is proven for $F^{(i)} = h^{(i)}$), which shows that
	\begin{equation*}
	\left\|\Phi^{\lambda^{(1)}}[F^{(1)}]  \chi_{\Omega^{\lambda^{(1)}}} - \Phi^{\lambda^{(2)}}[F^{(2)}]  \chi_{\Omega^{\lambda^{(2)}}}\right\|^2_{L^2(Q^{T,y})} \leq y \, C d((h^{(1)},\lambda^{(1)}),(h^{(2)},\lambda^{(2)}))^2.
	\end{equation*}
	Then by choosing $ y$ small enough we finally deduce 
	\begin{equation*}
	\|\Psi_1[h^{(1)},\lambda^{(1)}] - \Psi_1[h^{(1)},\lambda^{(2)}] \|_{ L^2(Q^{T,y})} \leq \frac{1}{2}  d((h^{(1)},\lambda^{(1)}),(h^{(2)},\lambda^{(2)})).
	\end{equation*}
	The conclusion of the proof follows.
\end{proof}

Now we are in the position to state and prove the main result of this section.  Here we remove the restriction $T \in \left(0,\frac{\rho_0}{2}\right)$ made in the previous technical results. 
The existence of a solution holds either in the time interval $[0,+\infty)$, or up to a time where the debonding front reaches the origin, so the film is completely debonded, apart from a point. (Recall that the origin is degenerate for our statement of Griffith's criterion.) 

\begin{teo} \label{thm:main}
 Let $\kappa$  satisfy assumptions \eqref{assumptionkappa}--\eqref{epsiloncondition}. Assume \eqref{uinitialcondition*} and \eqref{uinitialconditioncomp}. 
	Then, there exist $T^\ast>0$ and a  unique  radial solution $(u,\rho)$ to the coupled problem \eqref{princeq}--\eqref{Griffithcriterion} in $[0,T^\ast]$, in the sense of Definition \ref{def:sol-coupled}. Moreover: 
	\begin{enumerate}
		\item it either holds $T^\ast= +\infty$, or $T^\ast < +\infty$ and $\displaystyle\lim\limits_{s \to T^\ast} \rho(s) = R$; 
		\item $u$ has a continuous representative on  $[0,T^\ast)\times B(0,R)$ and 
		\begin{equation*}
		u \in C^0([0,T^\ast); W^{1,2}_{\textnormal{rad}}(   B(0,R)  )) \cap C^1([0,T^\ast); L^2_{\textnormal{rad}}( B(0,R) )).
		\end{equation*}
	\end{enumerate}
\end{teo}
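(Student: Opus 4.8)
The plan is to reduce everything to the one‑dimensional reduced variables $h$ and $\lambda$ of \eqref{coupledproblemhlambda}, to obtain a solution locally in time by a Banach fixed point argument, and then to extend it by iteration along the lines of \cite[Proposition 4.5]{RiNa2020}; the only genuinely new point is how the nonconstant kernel $F$ in \eqref{eqkernelFh} enters the a priori bounds. First I would fix $M>0$ large, depending on $u_0,u_1,w$ (equivalently on $h_0,h_1,z$), so that $\|\h^\lambda+\tfrac12\Phi^\lambda[F]\|_{C^0(Q^{T,y})}\le M$ for every admissible $\lambda$, take $T\in(0,\tfrac{\rho_0}{2})$ and $y\in(0,\tilde y)$ with $\tilde y$ as in Lemma \ref{lemmay} and Proposition \ref{propcontractionZ}, and apply the Banach fixed point theorem on the complete metric space $(\mathcal Z(M,T,y),d)$: this produces a unique fixed point $(h,\lambda)$ of $\Psi$ in \eqref{psiequation}, hence a solution of \eqref{coupledproblemhlambda}. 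Since $\dot\lambda\ge 1$ a.e. by \eqref{odelambda}, $\lambda$ is bi‑Lipschitz and $\rho(t):=t-\lambda^{-1}(t)$ is well defined on $[0,\tau_1]$ with $\tau_1:=\lambda(-\rho_0+y)\le T$; one checks directly from \eqref{lambdaequation}--\eqref{odelambda} that $\rho$ satisfies \eqref{hprho} (in particular $0\le\dot\rho<1$ and $\rho_0\le\rho<R$ for $y$ small) and solves Griffith's criterion in the form \eqref{oderho*}, hence \eqref{Griffithcriterion}.

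Once $\rho$ is known on $[0,\tau_1]$, Theorem \ref{teoexistenceh} and Corollary \ref{corollario}(ii) provide the corresponding solution $h$ of \eqref{princeqh} and the radial $u$ solving \eqref{princeq} on $\mathcal O_\rho$ (as defined by \eqref{def:domain-time}); by the uniqueness in those statements this $h$ agrees on the corner region with the fixed point, so the construction is consistent and $(u,\rho)$ is a radial solution on $[0,\tau_1]$ in the sense of Definition \ref{def:sol-coupled} — setting $u\equiv0$ on the bonded disk is legitimate because of the boundary condition $u=0$ on $\{|x|=R-\rho(t)\}$. Uniqueness on $[0,\tau_1]$ follows from uniqueness of the fixed point. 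To continue, I would note that by \eqref{formulah} and the regularity \eqref{uinitialcondition*} the traces $h(\tau_1,\cdot)$, $v(\tau_1,\cdot)$ are Lipschitz and $h_t(\tau_1,\cdot),v_t(\tau_1,\cdot)\in L^\infty$, the compatibility conditions at $r=0$ and $r=\rho(\tau_1)$ being inherited; hence the local step applies again with $\rho_0$ replaced by $\rho_1:=\rho(\tau_1)$ and $\kappa$ replaced by its Lipschitz restriction near $\rho_1$, which exists by \eqref{epsiloncondition}. Iterating produces an increasing sequence $\tau_k$ and solutions on $[0,\tau_k]$ that glue consistently; one then sets $T^\ast:=\sup_k\tau_k$ and obtains a unique radial solution on $[0,T^\ast)$ (the non‑radial uniqueness under extra regularity being a separate matter, cf.\ Remark \ref{rmk:radial-disp}).

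For the dichotomy in item (1), suppose $T^\ast<\infty$; by monotonicity and the bound $\rho<R$ the limit $\rho^\ast:=\lim_{s\to T^\ast}\rho(s)\le R$ exists, and I argue by contradiction that $\rho^\ast<R$ is impossible. Indeed, if $\rho^\ast<R$ then $R-\sigma\ge R-\rho^\ast>0$ throughout $\Omega_{T^\ast}$, so iterating the representation \eqref{formulah} exactly as in the estimate in the proof of Proposition \ref{propcontration} — a Grönwall‑type argument, which is the one place where the kernel $\tfrac14(\alpha^2+(R-\sigma)^{-2})$ matters — yields a uniform bound $\|h\|_{C^0(\Omega_t)}\le M^\ast$ for all $t<T^\ast$, with $M^\ast$ depending only on $\rho^\ast$, $T^\ast$ and the data. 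Then the quantities controlling the step length in Lemma \ref{lemmay} and Proposition \ref{propcontractionZ} (take $M=M^\ast$, use $\rho_k\le\rho^\ast$, $e^{\alpha\tau_k}$ bounded) are bounded uniformly in $k$, so $\tau_{k+1}-\tau_k$ is bounded below by a positive constant and $\tau_k>T^\ast$ for some $k$, a contradiction. Hence either $T^\ast=\infty$ or $\lim_{s\to T^\ast}\rho(s)=R$; the same estimate applied on compact subintervals of $[0,T^\ast)$ also controls $\|h\|_{C^0}$ and thus $u$ there.

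Finally, for item (2): on each $[\tau_k,\tau_{k+1}]$ Corollary \ref{corollario}(i) gives $v\in C^0([\tau_k,\tau_{k+1}];W^{1,2}(0,R))\cap C^1([\tau_k,\tau_{k+1}];L^2(0,R))$ with $v\equiv0$ outside $\overline\Omega$, and these pieces match at the endpoints, so $v\in C^0([0,T^\ast);W^{1,2}(0,R))\cap C^1([0,T^\ast);L^2(0,R))$; translating back via \eqref{defv} and \eqref{defh} and extending $u$ by zero on the bonded disk (admissible by the inner boundary condition and the continuity of $\rho$) gives $u\in C^0([0,T^\ast);W^{1,2}_{\textnormal{rad}}(B(0,R)))\cap C^1([0,T^\ast);L^2_{\textnormal{rad}}(B(0,R)))$, in particular a continuous representative on $[0,T^\ast)\times B(0,R)$. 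I expect the main obstacle to be precisely the continuation step — showing that the local existence time does not shrink to zero before $T^\ast$ — since it requires tracking how the a priori $C^0$‑bound on $h$, obtained by iterating \eqref{formulah}, degenerates only through the factor $(R-\sigma)^{-2}$ as $\rho\to R$; everything else is a routine adaptation of \cite{RiNa2020}.
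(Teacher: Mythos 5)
Your architecture is the paper's: reduce to the pair $(h,\lambda)$, run the Banach fixed point of Proposition \ref{propcontractionZ} on $\mathcal{Z}(M,T,y)$, read off $\rho$ from $\lambda$, reconstruct the full $h$ and $u$, and then prolong (your reconstruction of $h$ on all of $\Omega$ via Theorem \ref{teoexistenceh} for the now-prescribed $\rho$, plus uniqueness, is a legitimate variant of the paper's gluing of the triangle solution $h^{(0)}$ with the corner solution). However, there is a concrete hypothesis mismatch at the very first step: Proposition \ref{propcontractionZ} assumes $\kappa\in C^{0,1}([\rho_0,R))$, which is \emph{not} implied by \eqref{assumptionkappa}--\eqref{epsiloncondition}, and the theorem is stated precisely without global Lipschitz continuity of the toughness. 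The paper handles this by replacing $\kappa$ with an auxiliary $\tilde\kappa$ that coincides with $\kappa$ on $[\rho_0,\rho_0+\varepsilon]$ (with $\varepsilon=\varepsilon(\rho_0)$ from \eqref{epsiloncondition}) and is constant afterwards, applying the contraction to $\tilde\kappa$, and then observing that for $T$ small the front stays in $[\rho_0,\rho_0+\varepsilon]$, so the solution also solves the problem with $\kappa$. Your ``Lipschitz restriction near $\rho_1$'' at the continuation step is the right idea, but a restriction of $\kappa$ to a subinterval is not an admissible toughness on all of $[\rho_k,R)$ (you need the constant extension), and the modification must already be made at the initial step, otherwise the contraction you invoke is not available.

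The second genuine gap is in your dichotomy argument. The paper does not prove any uniform lower bound on the step lengths of the coupled iteration: it defines $T^\ast$ as the maximal existence time and, if $T^\ast<\infty$ with $\lim_{s\to T^\ast}\rho(s)<R$, it restarts the local argument \emph{at} $T^\ast$, using $h\in C^0([0,T^\ast];W^{1,2}(0,R))\cap C^1([0,T^\ast];L^2(0,R))$ and the inherited compatibility conditions, contradicting maximality. Your quantitative route instead claims that a uniform $C^0$ bound $M^\ast$ on $h$ (obtained by Gr\"onwall from \eqref{formulah}, which is fine when $\rho^\ast<R$) makes $\tau_{k+1}-\tau_k$ bounded below. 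That does not follow: the admissible $y$ in Lemma \ref{lemmay} and Proposition \ref{propcontractionZ} depends on the restart data not only through $M$, $\rho_k$ and $R-\rho_k$, but also through the smallness of $\int(|\dot h_0|+|h_1|)$ and $\int(\dot h_0-h_1)^2$ near the front (see \eqref{psi1bound}--\eqref{psi2bound}) and through the requirement $\Lambda^{h,\lambda}\in L^\infty$ in \eqref{lambdaequation}; these involve $L^\infty$ (or at least uniformly controlled) bounds on $h_r(\tau_k,\cdot)$ and $h_t(\tau_k,\cdot)$, which are not controlled by $\|h\|_{C^0}$ alone. To repair your version you would have to propagate uniform derivative bounds along the iteration (e.g.\ via the explicit formulas of Lemma \ref{lemmaAandPhiappedix} and \eqref{uinitialcondition*}), or simply adopt the paper's softer restart-at-$T^\ast$ argument, which sidesteps the issue entirely.
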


\begin{proof}
It suffices to prove that there exists a unique pair $(h,\rho)$ solution of  \eqref{princeqh}--\eqref{oderho} and thus obtain the desired solution $(u,\rho)$ to the coupled problem \eqref{princeq}--\eqref{Griffithcriterion}, in the sense of Definition \ref{def:sol-coupled}. The solution we find is unique among radial solutions; for further comments on the uniqueness of displacements, see the Remarks below. 

	We observe that, by restricting \eqref{princeqh} to the triangle $\mathbf{T}= \{(t,r) \,|\, 0\leq t\leq \rho_0, \ 0 \leq r \leq \rho_0 {-} t \}$ and arguing as in  Proposition \ref{propcontration}, there exist a unique  solution $h^{(0)}$ of \eqref{princeqh} in $\mathbf{T}$. 
 Observe that this domain is  independent of the debonding evolution. 
	
	Fix now $\varepsilon = \varepsilon(\rho_0) >0$ as in assumption \eqref{epsiloncondition},  and introduce a toughness $\tilde{\kappa}$ that coincides with $\kappa$ in $[\rho_0,\rho_0 + \varepsilon]$ and is constantly equal to $\kappa(\rho_0+\varepsilon)$ after $\rho_0+\varepsilon$.  Clearly $\tilde{\kappa} \in C^{0,1}([\rho_0,R))$.
	We can now apply Proposition \ref{propcontractionZ} and find ${y} \in (0,\rho_0)$ and $T^{{y}} \in \left(0,\frac{\rho_0}{2}\right)$ such that there exists a unique pair $(h^{(1)},\lambda^{(1)})$ with
	\begin{equation}\label{symh^1rho^1}
	\begin{dcases}
	h^{(1)}(t,r) = \left( \h^{\lambda^{(1)}}(t,r) + \frac{1}{2} \Phi^{\lambda^{(1)}}[F](t,r) \right) \chi_{\Omega^{\lambda^{(1)}}}(t,r) & \mbox{ for a.e. } (t,r) \in  Q^{T^{{y}},{y}},
	\\
	\lambda^{(1)} (s) = \frac{1}{2} \int_{-\rho_0}^{s} \left( 1+ \max \{\Lambda^{h^{(1)},\lambda^{(1)}}(\sigma),1 \} \right) \,d\sigma & \mbox{ for every } s \in I^{{y}}.
	\end{dcases}
	\end{equation}  
	Now let  $\rho^{(1)}$ be defined by  $\rho^{(1)}:=t-(\lambda^{(1)})^{-1}(t)$. Notice that $\rho^{(1)}(0)=\rho_0$. Then, by the continuity of $\tilde{\kappa}$, by the fact that $\tilde{\kappa}=\kappa$ on $[\rho_{0},\rho_0+\varepsilon]$, and by the regularity properties given by Lemma \ref{lemmaAandPhi}, there exists $T \in (0,T^{{y}})$ such that 
	\begin{enumerate}
		\item the pair $(h^{(1)},\lambda^{(1)})$ is the unique solution of \eqref{symh^1rho^1} in $[0,T]$, where  $\kappa$ is replaced by $\tilde{\kappa}$;
		\item the function $h$ defined as 
		\begin{equation*}
		h= h^{(0)} \chi_{\mathbf{T}} + h^{(1)} \chi_{Q^{T,{y}} \setminus \mathbf{T}}
		\end{equation*}
		solves \eqref{princeqh} on $\mathbf{T} \cup  Q^{T,{y}}$;
		\item the pair $(h,\rho^{(1)})$ solves \eqref{princeqh}--\eqref{oderho} in $[0,T]$ and 
		\begin{equation*}
		h \in C^0([0,T]; W^{1,2}(0,R)) \cap C^1([0,T]; L^2(0,R)).
		\end{equation*}
	\end{enumerate}
	
	This provides a short-time existence and uniqueness result for the coupled problem \eqref{princeqh}--\eqref{oderho}. Observe that a solution $(h, \rho)$ to the problem is locally unique as long as it is well-defined. Indeed, denote  (with a slight abuse of notation) again with $[0, T]$ a given interval where \eqref{princeqh}--\eqref{oderho} holds. For fixed $\overline{t} \in [0, T]$ one can consider the auxiliary problem \eqref{symh^1rho^1}, replacing $\rho_0$ by $\overline{\rho}_0 := \rho(\overline{t})$, $h_0$ by $h(\overline{t}, \cdot) \in W^{1,2}(0, \overline{\rho}_0)$ and $h_1$ by $h_t(\overline{t}, \cdot) \in L^2(0, \overline{\rho}_0)$; notice that $h(\overline{t}, 0) = z(\overline{t})$ and
	$h(\overline{t}, \overline{\rho}_0) = 0$, so the compatibility conditions are satisfied.  The same argument above then entails the existence of a uniquely defined solution in a neighborhood of $\overline{t}$, which must therefore coincide with the given one. By a standard prolongation argument, we can eventually define $T^\ast$ as the maximal time of existence for the solution $(h,\rho)$ of \eqref{princeqh}--\eqref{oderho} in $[0,T^\ast)$.
	If $T^\ast = +\infty$ we conclude. 	If $T^\ast < +\infty$, then, since $\rho(s) < R$ for every $s \in (0,T^\ast)$, surely one has $\lim\limits_{s \to T^\ast} \rho(s) \leq R$. Set $\rho(T^\ast)$ to be equal to this limit. Now, if by contradiction  a strict inequality $\rho(T^\ast)< R$ holds, the prolongation argument sketched above may still be applied (with $\overline{t}$ replaced by $T^\ast$). This contradicts the maximality of $T^\ast$ and gives $\lim\limits_{s \to T^\ast} \rho(s) = R$. The proof is concluded.
\end{proof}

 We conclude the discussion of the coupled problem by observing that the debonding front $\rho$ found in the previous theorem is more regular if the data of the problem are regular. As a consequence, we can prove that the corresponding displacement $u$ is the unique solution of the wave equation in the growing domain given by $\rho$. This justifies to some extent the ansatz of radial solutions: indeed, starting from regular radial data, there is a radial debonding front satisfying Griffith's criterion and the corresponding displacement is always radial. 

\begin{rem}\label{rmk:rho-regularity}
	With suitable assumptions on the  the data of the problem,  the regularity of the function $\rho$  found in the previous theorem actually increases. Namely, assume that the toughness $\kappa$ is in the class $C^{0,1}([\rho_0,R])$ and that the initial data $u_0$, $u_1$, and $w$ satisfy 
	\begin{equation}\label{rmk:assumptionsdata}
		 w \in C^{1,1}(0,+\infty), \quad u_0 \in C^{1,1}_{\textnormal{rad}}(\mathcal{C}_{R-\rho_0,R}), \quad u_1 \in C^{0,1}_{\textnormal{rad}}(\mathcal{C}_{R-\rho_0,R}) ,
	\end{equation}
 where $\textnormal{rad}$ means that the initial data are radial, as above. 
Then, we have that $\rho \in C^{1,1}([0,T^\ast])$. 

To see this, we first observe that
	\begin{equation}\label{rmk:eq-f}
	t \mapsto f(t):= \int_{0}^{t} F(\tau,\tau {+}\rho(t) {-} t) \,d\tau \in C^{0,1}\left(\left[0,\frac{\rho_0}{2}\right]\right) 
	\end{equation} 
(cf.\ definition and regularity of $F$ in \eqref{eqkernelFh}--\eqref{regularityGF}). Indeed, the derivative of the right-hand side of \eqref{rmk:eq-f} is given by
    \begin{equation}\label{rmk:eq-f'1}
    \dot{f}(t) = F(t,\rho(t)) + \int_{0}^{t} \frac{d}{dt}F(\tau,\tau {+}\rho(t) {-} t) \,d\tau ,
    \end{equation}
    where 
    \begin{equation}\label{rmk:eq-f'2}
    \begin{split}
    \frac{d}{dt}F(\tau,\tau {+}\rho(t) {-} t) &= \frac{\partial}{\partial \sigma} [F(\tau,\tau {+}\rho(t) {-} t)] \,(\dot{\rho} (t) {-} 1) 
    \\
    &= \frac{1}{4} \,\frac{2}{\left(R{-}\tau {-}\rho(t) {+} t\right)^3} \, h(\tau , \tau {+}\rho(t) {-} t)\, (\dot{\rho} (t) {-} 1) 
    \\
    & \quad+ \frac{1}{4} \left( \alpha^2 + \frac{1}{\left(R{-}\tau {-}\rho(t) {+} t\right)^2}\right)  h_r(\tau , \tau {+}\rho(t) {-} t) \,(\dot{\rho} (t) {-} 1)\,.
    \end{split}
    \end{equation}
    Recalling  that $h \in C^0([0,T^\ast); W^{1,2}(0,R))$, 
$h_r \in C^0([0,T^\ast); L^2(0,R))$, and $\dot{\rho} \in L^\infty\left(\left[0,\frac{\rho_0}{2}\right]\right)$, we have $\dot{f} \in L^\infty\left(\left[0,\frac{\rho_0}{2}\right]\right) $, so that $f \in C^{0,1}\left(\left[0,\frac{\rho_0}{2}\right]\right)$. From this, and by \eqref{rmk:assumptionsdata}, we deduce that 
	\begin{equation*}
	t \mapsto \max \left\{ 0, \ \frac{\mathcal{G}_0(t) - \kappa(\rho(t))}{\mathcal{G}_0(t) + \kappa(\rho(t))} \right\} \in C^{0,1}\left(\left[0,\frac{\rho_0}{2}\right]\right)
	\end{equation*}
	(cf. \eqref{formulaG_0}). By Remarks \ref{rmk:transl} and \ref{remG}, we can extend this regularity property to the whole interval $[0,T^\ast]$. We conclude that the right-hand side of \eqref{oderho} belongs to $C^{1,1}([0,T^\ast])$ as well, so that $\rho \in C^{1,1}([0,T^\ast])$.
\end{rem}

\begin{rem} \label{rmk:radial-disp}
	Consider the pair $(u,\rho)$  found in the previous theorem, solving the coupled problem \eqref{princeq}--\eqref{Griffithcriterion} in the sense of Definition \ref{def:sol-coupled}. With suitable assumptions on the data of the problem, one can also show that the function $u$ is the unique solution of  \eqref{princeq} corresponding to the  debonding  evolution $t\mapsto \rho(t)$,  \emph{without} the constraint on the displacement to be radial.
	Namely, assume that the toughness $\kappa$ is of class $C^{1,1}([\rho_0,R])$, that the initial data $u_0$, $u_1$, and $w$ satisfy 
	\begin{equation}\label{rmk:assumptionsdata*}
w \in C^{2,1}(0,+\infty), \quad u_0 \in C^{2,1}_{\textnormal{rad}}(\mathcal{C}_{R-\rho_0,R}), \quad u_1 \in C^{1,1}_{\textnormal{rad}}(\mathcal{C}_{R-\rho_0,R}) ,
	\end{equation}
	and that the following first-order compatibility conditions hold:
	\begin{equation}\label{rmk:firstordercompdata*}
	\begin{dcases}
	h_1(0) = \dot{z}(0) ,
	\\
	h_1(\rho_0) + \dot{h}_0(0) \max \left\{ 0, \frac{(\dot{h}_0(\rho_0) - h_1(\rho_0))^2 - 2 (R{-}\rho_0)\kappa (\rho_0)}{(\dot{h}_0(\rho_0) - h_1(\rho_0))^2 + 2 (R{-}\rho_0)\kappa(\rho_0)}  \right\} = 0 ,
	\end{dcases}
	\end{equation} 
	where $h$ is given by \eqref{defh} and $z$ by \eqref{eqinitialdatah}. We then claim that 
\begin{equation}\label{eq: claim}
	\frac{\mathcal{G}_0(t) - \kappa(\rho(t))}{\mathcal{G}_0(t) + \kappa(\rho(t))} \in C^{1,1}([0,T^\ast]).
	\end{equation}

To see this, observe that, since $\rho \in C^{1,1}([0,T^\ast])$ by Remark \ref{rmk:rho-regularity}, with \eqref{rmk:assumptionsdata*}, \eqref{rmk:firstordercompdata*},
{ one has that $h \in C^{1,1}(\overline{\Omega}^\rho)$.} With this, and again using that $\rho \in C^{1,1}([0,T^\ast])$, the function $\dot{f}$ in \eqref{rmk:eq-f'1}--\eqref{rmk:eq-f'2} actually belongs to $C^{0,1}\left(\left[0,\frac{\rho_0}{2}\right]\right)$, hence $f \in C^{1,1}\left(\left[0,\frac{\rho_0}{2}\right]\right)$. Again exploiting Remarks \ref{rmk:transl} and \ref{remG}, and by the regularity assumption \eqref{rmk:assumptionsdata*}, we get \eqref{eq: claim}.

Now \eqref{oderho} ensures that $\dot\rho$ coincides with the positive part of the $C^{1,1}$ function in \eqref{eq: claim}, therefore we may (just) say that $\rho$ is $C^{2,1}$ in an open neighborhood of  each point $\overline{t} \in A$, where 
	\begin{equation*}
	A:=\{t \in [0,T^\ast]: \, \dot{\rho}(t) \neq 0 \},
	\end{equation*} 
	since $\mathcal{G}_0(t) - \kappa(\rho(t)) > 0$ for every $t$ in an open neighborhood of $\overline{t} \in A$. 
	For instance, we observe that if the following condition is fulfilled,
	\begin{equation*}
	\left(\dot{v}_0(\rho_0) - v_1(\rho_0)\right)^2 > 2 \kappa(\rho_0),
	\end{equation*}
	then $\dot{\rho}(0)>0$, hence it exists $\overline{\tau} >0$ such that $\rho \in C^{2,1}([0,\overline{\tau}])$. Therefore,  by applying Proposition \ref{prop-app-uniq},  the function $u$ given by Theorem \ref{thm:main} is the unique solution of \eqref{princeq} in the growing domain $\mathcal{C}_{R-\rho(t),R}$. 
	
	We also observe that, if $\partial A$ is finite (which can be checked {\it a posteriori}), then $\rho$ is $C^{2,1}$-piecewise in $[0,T^\ast]$. In this latter case one can apply Remark \ref{rmk-app} and obtain again that the function $u$ is the unique solution of \eqref{princeq}  in $\mathcal{C}_{R-\rho(t),R}$. 
\end{rem}

\appendix

\section{Proof of Proposition \ref{propderivT}}\label{secproofprop}
 In this appendix we provide a proof of Proposition \ref{propderivT}. The proof follows the lines of \cite[Proposition 2.1]{RiNa2020}, with some modifications due to the nonconstant kernel in \eqref{princeqh},
however we present the argument in order to highlight the role of the terms appearing in the energy balance formula \eqref{formuladerivativeT}--\eqref{formuladerivativeT-2}.

As in Remark \ref{remv} we denote by $\vi$ the solution to the pure wave equation \eqref{eq:undamp} with data $v_0$, $v_1$, and $w$ as in \eqref{vinitialcondition}--\eqref{vinitialconditioncomp}.
We extend $\vi$ by setting $\vi(t,r)=0$ if $r\in(\rho(t),R)$.
For $(t,r)\in\Omega'$ there is a formula for $\vi$ analogous to \eqref{A(t,r)}
(where $\Omega'$ is defined in \eqref{def:Omega}).
It is convenient to use the equivalent formula 
\begin{equation}\label{V}
\vi (t,r) = \vi_1(t{+}r) + \vi_2(t{-}r) \quad\forall (t,r) \in \Omega' ,
\end{equation}
where 
\begin{equation}\label{a_1a_2}
\begin{split}
\vi_1(s) := 	
\begin{dcases}
\frac{1}{2} v_0(s) + \frac{1}{2} \int_{0}^{s} v_1(\sigma) \,d\sigma &\mbox{ if } s \in (0,\rho_0],
\\
-\frac{1}{2} v_0(-\omega(s)) + \int_{0}^{ - \omega(s) } v_1(\sigma) \,d\sigma &\mbox{ if } s \in (\rho_0,2t^\ast),
\end{dcases}
\\
\vi_2(s) := 	
\begin{dcases}
\frac{1}{2} v_0(-s) - \frac{1}{2} \int_{0}^{-s} v_1(\sigma) \,d\sigma &\mbox{ if } s \in (-\rho_0,0],
\\
w(s) -\frac{1}{2} v_0(s) - \frac{1}{2} \int_{0}^{s} v_1(\sigma) \,d\sigma &\mbox{ if } s \in (0,\rho_0),
\end{dcases}
\end{split}
\end{equation}
and $t^\ast$ is defined by $t^\ast=T$ if $\rho(T)>T$, otherwise $t^\ast$ is the unique solution of $t^\ast=\rho(t^\ast)$. 
 In fact notice that if $(t,r)\in\Omega'_1$, then $t{-}r\in(-\rho_0,0]$ and $t{+}r\in(0,\rho_0]$; 
if $(t,r)\in\Omega'_2$, then $t{-}r,t{+}r\in(0,\rho_0]$; 
if $(t,r)\in\Omega'_3$, then $t{-}r\in(-\rho_0,0]$ and $t{+}r\in(\rho_0,2t^\ast)$; cf.\ also Figure \ref{fig:cone}. 

We will need the derivatives of $\vi$,  
as well as the derivatives of a generic function \eqref{defmathcalF} defined by double integration on the domain $P(t,r)$ given in \eqref{R(t,r)};
in the following lemma we recall the corresponding formulas, computed in \cite[Lemmas 1.10 and 1.11]{RiNa2020} to which we refer for the proof.

\begin{lemma}\label{lemmaAandPhiappedix}
	The following hold true.
	\begin{enumerate}
		\item[(i)]  Let $\vi$, $\vi_1$, and $\vi_2$ be as in \eqref{V}--\eqref{a_1a_2}.
Then for every $t \in [0,\frac{\rho_0}{2}]$ and for a.e.\ $r \in (0,R)$ the partial derivatives of $\vi$ are given by 
		\begin{equation*}
		\begin{split}
		\vi_t(t,r) &=
		\begin{dcases}
		\dot{\vi}_1(t{+}r) + \dot{\vi}_2(t{-}r) &\mbox{ if } r \in (0,\rho(t)),
		\\
		0&\mbox{ if } r \in (\rho(t),R),
		\end{dcases}
		\\
		\vi_r(t,r) &=
		\begin{dcases}
		\dot{\vi}_1(t{+}r) - \dot{\vi}_2(t{-}r)&\mbox{ if } r \in (0,\rho(t)),
		\\
		0 &\mbox{ if } r \in (\rho(t),R).
		\end{dcases}
		\end{split}
		\end{equation*}
		
		\item[(ii)] Let $H \in L^2(\Omega')$ and $\Phi[H]$ as in \eqref{defmathcalF}. Then for every $t \in [0,\frac{\rho_0}{2}]$ the partial derivatives $\Phi[H]_t(t,\cdot)$, $\Phi[H]_r(t,\cdot)$ exist and for every $t \in [0,\frac{\rho_0}{2}]$ and for a.e.\ $r \in (0,R)$ we have
		\begin{align*}
		&\Phi[H]_t(t,r) =
		\begin{dcases}
		g_1(t,r) + g_2(t,r) &\mbox{ if } r \in (0,\rho(t)),
		\\
		0&\mbox{ if } r \in (\rho(t),R),
		\end{dcases}
		\\
		&\Phi[H]_r(t,r) =
		\begin{dcases}
		g_1(t,r) - g_2(t,r) &\mbox{ if } r \in (0,\rho(t)),
		\\
		0&\mbox{ if } r \in (\rho(t),R),
		\end{dcases}
		\end{align*}
		with
		\begin{equation}\label{deff_1}
		g_1(t,r):=
		\begin{dcases}
		\int_0^t H(\tau,t{+}r{-}\tau) \,d\tau &\mbox{if } 0\leq r \leq \rho_0{-}t,
		\\
		-\dot{\omega}(t{+}r) \int_{0}^{\psi^{-1}(t{+}r)} \!\!\!\!\!\!\!\!\!\! H(\tau,\tau {-}\omega(t{+}r)) \,d\tau  + \int_{\psi^{-1}(t{+}r)}^t \!\!\!\!\!\!\!\!\!\! H(\tau,t{+}r{-}\tau) \,d\tau  &\mbox{if }\rho_0{-}t\leq r \leq \rho(t)
		\end{dcases}
		\end{equation}
		and  
		\begin{equation}\label{deff_2}
		 g_2(t,r):=
		\begin{dcases}
		\int_0^t H(\tau, r{-}t{+}\tau) \,d\tau &\mbox{if } t\leq r \leq \rho(t),
		\\
		- \int_0^{t-r} H(\tau,t{-}r{-}\tau) \,d\tau + \int_{t-r}^t H(\tau,r{-}t{+}\tau) \,d\tau  &\mbox{if } 0 \leq r \leq t .
		\end{dcases}
		\end{equation}
	\end{enumerate}	
\end{lemma}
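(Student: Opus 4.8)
The statement is \cite[Lemmas 1.10 and 1.11]{RiNa2020} transported to the present geometry, and the mechanism is the same in both parts: one passes to the characteristic variables $\xi=\sigma-\tau$, $\eta=\sigma+\tau$, in which the cone $C(t,r)$ from \eqref{C(t,r)} becomes the right triangle $\{(\xi,\eta):r{-}t\le\xi\le\eta\le r{+}t\}$ with legs on the lines $\{\xi=r{-}t\}$ and $\{\eta=r{+}t\}$, and then differentiates with respect to the (affine, or $\omega$-modulated) endpoints of iterated integrals. For part (i), Remark \ref{remv} tells us that $\vi$ solves on $\Omega'$ the d'Alembert representation obtained from \eqref{A(t,r)} by replacing $h_0,h_1,z$ with $v_0,v_1,w$; matching that formula term by term with \eqref{a_1a_2} — using, as recorded after \eqref{a_1a_2}, that $(t{-}r,t{+}r)\in(-\rho_0,0]\times(0,\rho_0]$ on $\Omega'_1$, $(t{-}r,t{+}r)\in(0,\rho_0]^2$ on $\Omega'_2$, and $(t{-}r,t{+}r)\in(-\rho_0,0]\times(\rho_0,2t^\ast)$ on $\Omega'_3$ — one checks directly that $\vi(t,r)=\vi_1(t{+}r)+\vi_2(t{-}r)$ on $\Omega'$. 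Since $v_0\in W^{1,2}$, $v_1\in L^2$, $w\in W^{1,2}$ and $\omega$ is Lipschitz, the scalar functions $\vi_1,\vi_2$ are Sobolev on the relevant intervals, and the chain rule for the composition of a Sobolev function with the affine maps $(t,r)\mapsto t\pm r$ yields $\vi_t=\dot{\vi}_1(t{+}r)+\dot{\vi}_2(t{-}r)$, $\vi_r=\dot{\vi}_1(t{+}r)-\dot{\vi}_2(t{-}r)$ for $t\in[0,\tfrac{\rho_0}2]$ and a.e.\ $r\in(0,\rho(t))$. Finally, the continuous representative of $\vi$ on $\overline{\Omega}{}'$ (Lemma \ref{lemmaAandPhi}(i)) vanishes on the front $\{r=\rho(t)\}$ by the boundary condition in \eqref{eq:undamp}, so its extension by zero to $\{r>\rho(t)\}$ is continuous across the interface, hence lies in $W^{1,2}$ with no jump contribution, and the derivatives are zero there.

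For part (ii) it suffices to prove the formulas for $H\in C^0(\overline{\Omega}{}')$: the case $H\in L^2(\Omega')$ then follows by approximation, since $H\mapsto\Phi[H]$ is continuous from $L^2(\Omega')$ into $C^0([0,T];W^{1,2}(0,R))$ by Lemma \ref{lemmaAandPhi}(ii) — in particular $H\mapsto\Phi[H]_t(t,\cdot),\Phi[H]_r(t,\cdot)$ into $L^2(0,R)$ — while, by Fubini, the right-hand sides of \eqref{deff_1}--\eqref{deff_2} depend continuously on $H$ in the same norm. In the variables $\xi,\eta$, definition \eqref{R(t,r)} says that $P(t,r)$ is exactly the triangle $\{r{-}t\le\xi\le\eta\le r{+}t\}$ on $\Omega'_1$; on $\Omega'_2$ one subtracts the triangle $C(t{-}r,0)$, whose $\xi$-leg lies on $\{\xi=r{-}t\}$ and whose highest vertex is at $\eta=t{-}r$; on $\Omega'_3$ one subtracts $C(\psi^{-1}(t{+}r),\rho(\psi^{-1}(t{+}r)))$, whose $\xi$-leg lies on $\{\xi=\rho(\psi^{-1}(t{+}r))-\psi^{-1}(t{+}r)\}=\{\xi=-\omega(t{+}r)\}$ and whose highest vertex is at $\eta=t{+}r$. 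In each case the subtracted triangle (intersected with $\Omega$) is contained in $C(t,r)$ and its removal exactly accounts both for the part of $C(t,r)$ lying outside $\Omega$ and for the reflected shadow, so $P(t,r)\subset\Omega$ and $\Phi[H](t,r)=\iint_{C(t,r)}\widehat H-\iint_{(\text{subtracted triangle})}\widehat H$, with $\widehat H$ the extension of $H$ by zero outside $\Omega$.

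Writing each of these double integrals as an iterated integral in $(\xi,\eta)$ and differentiating, the endpoint $\eta=r{+}t$ produces the quantity called $g_1(t,r)$ — the plain integral in the first line of \eqref{deff_1} before any reflection ($0\le r\le\rho_0{-}t$), and the two-term expression of \eqref{deff_1} after one reflection off the front ($\rho_0{-}t\le r\le\rho(t)$), the weight $\dot\omega(t{+}r)$ appearing because $\tfrac{d}{dt}\,\omega(t{+}r)=\tfrac{d}{dr}\,\omega(t{+}r)=\dot\omega(t{+}r)$ and $\psi^{-1}(t{+}r)$ moves likewise. The endpoint $\xi=r{-}t$, together with the subtracted triangle in $\Omega'_2$, produces $g_2(t,r)$ — the first line of \eqref{deff_2} for $t\le r$, and the two-term expression for $0\le r\le t$, the integral over $[0,t{-}r]$ coming from the subtracted cone $C(t{-}r,0)$ and the one over $[t{-}r,t]$ from the $\xi$-leg restricted to $\{\sigma>0\}$. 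Since $\partial_t$ and $\partial_r$ act with equal sign on the endpoint $r{+}t$ and with opposite sign on the endpoint $r{-}t$, the two pieces combine as $\Phi[H]_t=g_1+g_2$ and $\Phi[H]_r=g_1-g_2$, which also makes transparent that these are zero for $r>\rho(t)$.

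The delicate point — and the step I expect to be the main obstacle — is $\Omega'_3$: one must locate, uniformly as $(t,r)$ varies, the vertex $(\psi^{-1}(t{+}r),\rho(\psi^{-1}(t{+}r)))$ of the subtracted cone sitting on the moving front $\{\sigma=\rho(\tau)\}$, verify that this cone intersected with $\Omega$ is contained in $C(t,r)$ and covers $C(t,r)\setminus\Omega$ so the set difference is a genuine difference of integrals, and check that the boundary terms generated by differentiating the corresponding iterated integral reassemble precisely into the $\dot\omega(t{+}r)$-weighted expression in the second line of \eqref{deff_1}; this relies on $0\le\dot\rho<1$ (hence $0\le\dot\omega\le1$) and on $\rho_0{-}t\le r$. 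The computations in $\Omega'_1$ and $\Omega'_2$ are strictly simpler, so I would present the $\Omega'_3$ case in full and merely indicate the other two.
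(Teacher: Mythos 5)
Your argument is correct and is essentially the proof the paper relies on: the paper does not prove this lemma itself but refers to \cite[Lemmas 1.10 and 1.11]{RiNa2020}, and your passage to characteristic variables followed by differentiation of the moving endpoints (the $\eta=t{+}r$ edge giving $g_1$, the $\xi=r{-}t$ edge and the subtracted cones giving $g_2$, with the $\dot\omega(t{+}r)$-weight from the reflected edge $\xi=-\omega(t{+}r)$) is exactly that computation, in the same spirit as the paper's own proof of Proposition \ref{propformulah}. The one step worth making explicit in your density argument for part (ii) is the quantitative bound $\|g_1(t,\cdot)\|_{L^2(0,R)}+\|g_2(t,\cdot)\|_{L^2(0,R)}\le C\|H\|_{L^2(\Omega')}$ (Cauchy--Schwarz plus Fubini along the characteristics, uniformly in $t$), since Lemma \ref{lemmaAandPhi}(ii) as stated only gives membership of $\Phi[H]$ in $C^0([0,T];W^{1,2}(0,R))$ and not an operator norm bound, which is what you actually need to identify the limit of $\Phi[H_n]_t$, $\Phi[H_n]_r$ with the formulas for a general $H\in L^2(\Omega')$.
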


 We remark that formulas \eqref{deff_1}--\eqref{deff_2} correspond to an integration on the boundary of $P(t,r)$. 

\begin{proof}[Proof of Proposition \ref{propderivT}]
	 We detail the proof of formula \eqref{formuladerivativeT}.  
	Recalling that $\mathcal{A}$ is absolutely continuous by construction, cf.\ \eqref{eqA}, changing variables one has for a.e. $t \in [0,T]$
	\begin{equation*}
	\dot{\mathcal{A}}(t) = 2\pi \alpha \int_{0}^{\rho(t)} (R{-}r) \, v_t^2(t,r) \,dr .
	\end{equation*}
	The internal energy defined in \eqref{eqE} is rewritten as 
	\begin{equation}\label{eqEappendix}
	\mathcal{E}(t) = \pi \int_{0}^{\rho(t)} (R{-}r)\, (v_t^2(t,r) + v_r^2(t,r)) \,dr .
	\end{equation}
	By Remark \ref{remv}, we know that for every $(t,r) \in \overline{\Omega}_T$
	\begin{equation*}
	v (t,r) = \vi(t,r) + \frac{1}{2} \iint_{R(t,r)} \left( -\frac{v_r(\tau , \sigma)}{R{-}\sigma}  - \alpha \, v_t(\tau , \sigma) \right) \,d\sigma \,d\tau = \vi(t,r) + \frac{1}{2} \Phi[G](t,r) ,
	\end{equation*}
	where $\Phi$ is defined as in \eqref{defmathcalF}  and $G$ is given by \eqref{eqkernelFv}. Hence, by Lemma \ref{lemmaAandPhiappedix}(i), for every $t \in [0,T]$ we get
	\begin{equation}\label{eqv_tv_r}
	\begin{split}
	v_t(t,r) = \dot{\vi}_1(t{+}r) + \dot{\vi}_2(t{-}r) + \frac{1}{2} \Phi[G]_t(t,r) \quad&\mbox{ for a.e. } r \in [0,\rho(t)],
	\\
	v_r(t,r) = \dot{\vi}_1(t{+}r) - \dot{\vi}_2(t{-}r) + \frac{1}{2} \Phi[G]_r(t,r) \quad&\mbox{ for a.e. } r \in [0,\rho(t)].
	\end{split}
	\end{equation}
 We now use the formulas for $\Phi[G]_t(t,r)$ and $\Phi[G]_r(t,r)$ provided by Lemma \ref{lemmaAandPhiappedix}(ii),
where we still denote by $g_1$ and $g_2$ the functions given by \eqref{deff_1}--\eqref{deff_2} with $H=G$. 
	Hence by \eqref{eqEappendix}  and by the  relation $(a{+}b)^2+(a{-}b)^2=2(a^2{+}b^2)$  we have
	\begin{equation}\label{eqE_0}
	\begin{split}
	&\frac{1}{2\pi}\mathcal{E}(t) = 2 \int_{0}^{\rho(t)} (R{-}r) (v_t^2(t,r) + v_r^2(t,r)) \,dr  
	\\
	&= \int_{t}^{t+\rho(t)} (R {-} y {+} t )  \left( \dot{\vi}_1(y) + \frac{1}{2}g_1(t,y{-}t) \right)^2 dy  
	+ \int_{t-\rho(t)}^{t} (R {+} y {-} t )  \left( \dot{\vi}_2(y) + \frac{1}{2}g_2(t,t{-}y) \right)^2 dy  .
	\end{split}
	\end{equation}
 	 It is now convenient to compute the expression of $g_1(t,y{-}t)$ separately for $y\in(t,\rho_0)$ and $y\in(\rho_0,t{+}\rho(t))$, obtaining by \eqref{deff_1}
 	\begin{equation*}
 	g_1(t,y-t)=
 	\begin{dcases}
 	\int_0^t G(\tau,y{-}\tau) \,d\tau &\mbox{ if } t\leq y \leq \rho_0,
 	\\
 	-\dot{\omega}(y) \int_{0}^{\psi^{-1}(y)} G(\tau,\tau {-}\omega(y)) \,d\tau + \int_{\psi^{-1}(y)}^t G(\tau,y{-}\tau) \,d\tau  &\mbox{ if }\rho_0\leq y \leq t+\rho(t),
 	\end{dcases}
 	\end{equation*}
	 where one can observe that the $t$-dependence is only at one limit of the integrals. 
	Analogously, computing $g_2(t,t{-}y)$ for $y\in(t{-}\rho(t),0)$ and $y\in(0,t)$, by \eqref{deff_2} one has 
 	\begin{equation*}
	g_2(t,t-y)=
 	\begin{dcases}
 	\int_0^t G(\tau, \tau{-}y) \,d\tau &\mbox{ if } t-\rho(t)\leq y \leq 0,
 	\\
 	- \int_0^{y} G(\tau,y{+}\tau) \,d\tau + \int_{y}^t G(\tau,\tau{-}y) \,d\tau  &\mbox{ if } 0 \leq y \leq t .
 	\end{dcases}
 	\end{equation*}
	It follows that
 	\begin{equation}\label{d/dt g = G}
 	\frac{d}{dt} \, g_1(t,y{-}t) = G(t,y{-}t),
 	\qquad
 	\frac{d}{dt} \, g_2(t,t{-}y) = G(t,t{-}y).
 	\end{equation}
 	 We now want to derive \eqref{eqE_0}. To this aim, we analyse the regularity of the term $g_1$ for $t \in \left[0, \frac{\rho_0}{2} \right]$ and $y \in [\rho_0, t{+}\rho(t)]$, the case $y\in[t,\rho_0]$ being similar. We define $f(t,y) := g_1(t,y-t)$ and set $I^y:=\{ t \in \left[0,\frac{\rho_0}{2} \right] \,|\, y \leq t+\rho(t) \}$ and $\widehat{\Omega}:= \left\{(t,y) \,| \, 0\leq t \leq \frac{\rho_0}{2} , \ \rho_0 \leq y \leq t{+}\rho(t) \right\}$. 
Since $G \in C^0([0,T];W^{1,2}(0,R))$ and $\omega$ is Lipschitz, the following hold:
 	\begin{enumerate}
 		\item For every $t \in  \left[0, \frac{\rho_0}{2}  \right]$ we have $f(t,\cdot)\in L^\infty(\rho_0,t + \rho(t))$.
 		\item For a.e.\ $y \in [\rho_0,t + \rho(t)]$ we have $f(\cdot,y)\in W^{1,2}(I^y)$.
 		\item We have $\frac{\partial}{\partial t} f = G \in L^2(\widehat{\Omega})$ (cf.\ \eqref{d/dt g = G}). 
 	\end{enumerate} 
Analogous observations hold for $g_2$ with slight modifications. 
Moreover, the function $t\mapsto t +\rho(t)$ is Lipschitz and nondecreasing. By \cite[Theorem A.8]{RiNa2020}, these  properties  guarantee that we can apply the chain rule when deriving \eqref{eqE_0}.  
    Hence  we can compute the total derivative of the energy $\mathcal{E}$, which we split into two terms, 
	\begin{equation*}
	\dot{\mathcal{E}}(t) = S_1(t) + S_2(t) ,
	\end{equation*}
 where $S_1$ contains the boundary terms and $S_2$ contains the integrals terms. Precisely, 
	the function $S_1$ is given by 
	\begin{equation}\label{eqS_1}
	\begin{split}
	S_1(t)  := & - 2\pi R  \left(  \dot{\vi}_1(t) + \frac{1}{2}g_1(t,0) \right)^2
	+ 2\pi(1{+}\dot{\rho}(t)) \, (R{-}\rho(t)) \left(  \dot{\vi}_1(t {+} \rho(t)) + \frac{1}{2}g_1(t,\rho(t)) \right)^2
	\\
	&
	 - 2\pi(1{-}\dot{\rho}(t)) \, (R{-}\rho(t)) \left( \dot{\vi}_2(t {-} \rho(t)) + \frac{1}{2}g_2(t,\rho(t)) \right)^2  + 2\pi R \left(  \dot{\vi}_2(t) + \frac{1}{2}g_2(t,0) \right)^2.
	\end{split}
	\end{equation}
	By straightforward computations  and changing back the variables to $r= y{-}t$ and $r=t{-}y$, respectively, the function  $S_2$ is given by
	\begin{equation*}
	\begin{split}
	S_2(t) &:= 2\pi \int_{t}^{t+\rho(t)} \left( \dot{\vi}_1(y) + \frac{1}{2}g_1(t,y{-}t) \right)^2 \,dy  -  2\pi\int_{t-\rho(t)}^t \left( \dot{\vi}_2(y) + \frac{1}{2}g_2(t,t{-}y) \right)^2 \,dy
	\\
	& \phantom{:=}\
	+ 2\pi\int_{t}^{t+\rho(t)} (R{-}y{+}t) \left( \dot{\vi}_1(y) + \frac{1}{2}g_1(t,y{-}t) \right) \left( -\frac{v_r(t , y{-}t)}{R{-}y{+}t}  - \alpha \, v_t(t , y{-}t) \right) \, dy
	\\
	& \phantom{:=}\
	+ 2\pi \int_{t-\rho(t)}^{t} (R {+}y {-}t) \left( \dot{\vi}_2(y) +  \frac{1}{2}g_2(t,t{-}y) \right) \left( -\frac{v_r(t,t{-}y)}{R {+} y {-}t}  - \alpha \, v_t(t , t{-}y) \right) \, dy
	\\
	& \phantom{:}
	= - 2\pi \alpha \int_{0}^{\rho(t)} (R{-}r)\, v_t^2(t,r) \,dr ,
	\end{split}
	\end{equation*}
	 where we have used the formulas for $v_t$ and $v_r$ in \eqref{eqv_tv_r}. Notice, moreover, that $S_2 + \dot{\mathcal{A}} =0$, hence
	\begin{equation*}
	\dot{\mathcal{T}}(t) = \, S_1(t) + S_2 (t) + \dot{\mathcal{A}} (t) = S_1 (t). 
	\end{equation*}
 We are left with the term $S_1$. In order to simplify that expression, 
we plug the definitions of $g_1$ and $g_2$ \eqref{deff_1}--\eqref{deff_2}, recalling that $H=G$. 
 By an explicit computation of the derivatives of the functions $\vi_1$ and $\vi_2$ given in \eqref{a_1a_2}, from \eqref{eqS_1} we get 
	\begin{equation*}
	\begin{split}
	S_1(t) =& -2\pi R  \left[ \frac{\dot{v}_0(t) + v_1(t)}{2} + \frac{1}{2} \int_{0}^{t} G(\tau, t{-}\tau) \,d \tau  \right]^2 
	\\
	&  + 2\pi(1 {+} \dot{\rho}(t)) \,(R{-}\rho(t))\, \dot{\omega}^2(t {+} \rho(t))  \left[\frac{\dot{v}_0(\rho(t) {-} t) - v_1(\rho(t) {-} t)}{2}  - \frac{1}{2} \int_{0}^{t} G(\tau,\tau {+} \rho(t) {-} t) \,d\tau \right]^2
	\\
	& 
	- 2\pi (1{-}\dot{\rho}(t))\,  (R{-}\rho(t)) \left[ \frac{\dot{v}_0(\rho(t) {-} t) - v_1(\rho(t) {-} t)}{2}  - \frac{1}{2} \int_{0}^{t} G(\tau,\tau {+} \rho(t) {-} t) \,d\tau \right]^2
	\\
	& 
	+ 2\pi R  \left[ \dot{w}(t) - \left( \frac{\dot{v}_0(t) + v_1(t)}{2} + \frac{1}{2} \int_{0}^{t} G(\tau, t{-}\tau) \,d \tau \right) \right]^2 .
	\end{split}
	\end{equation*}
 By using the relation $-a^2 + (b{-}a)^2 = b \,(b{-}2a)$ and the formula $\dot{\omega}(t + \rho(t)) = \frac{1-\dot{\rho}(t)}{1 + \dot{\rho}(t)}$ we obtain 
	\begin{equation*}
	\begin{split}
&	S_1(t) 	=  2\pi R\, \dot{w}(t) \left[ \dot{w}(t) - \left(\dot{v}_0(t) + v_1(t)  + \int_{0}^{t} G(\tau,t{-}\tau) \right) \right] 
\\ & +  \frac\pi2  (R{-}\rho(t)) \left( (1{+}\dot{\rho}(t)) \frac{(1{-}\dot{\rho}(t))^2}{(1 {+} \dot{\rho}(t))^2}  - (1{-}\dot{\rho}(t))  \right) 
	 \left[ \dot{v}_0(\rho(t) {-} t) - v_1(\rho(t) {-} t)  - \int_{0}^{t} G(\tau,\tau {+} \rho(t) {-} t) \,d\tau \right]^2 .
	\end{split}
	\end{equation*}
Finally,  in this expression we plug 
	\begin{equation*}
	\left( (1{+}\dot{\rho}(t)) \,  \frac{(\dot{\rho}(t){-}1)^2}{(1 {+} \dot{\rho}(t))^2} - (1{-}\dot{\rho}(t) ) \right)  = - 2 \dot{\rho}(t) \, \frac{1{-}\dot{\rho}(t)}{1 {+} \dot{\rho}(t)} ,
	\end{equation*}
 thus we see that formula \eqref{formuladerivativeT} holds. Moreover, arguing by translation as in Remark \ref{rmk:transl},
we deduce that $\mathcal{T} \in AC([0,T])$.  
	
	In order to prove the validity of formula \eqref{formuladerivativeT-2} one can observe that since $h(t,r) = (R{-}r)^{\frac{1}{2}} \, e^{\frac{\alpha}{2}t} \,v(t,r)$, then one can rewrite
	\begin{equation*}
	\mathcal{E}(t) = \pi e^{-\alpha t} \int_{0}^{\rho(t)}  \left(h_t(t,r) -\frac{\alpha}{2} \,h(t,r) \right)^2 + 
	\left( h_r(t,r) + \frac{1}{2} (R{-}r)^{-1}\,h(t,r)\right)^2\,dr
	\end{equation*}
	and proceed with analogous computations as above.
\end{proof}

\section{A general uniqueness result} \label{app:uniq}

In this appendix we provide a uniqueness result for problem \eqref{princeq} when the debonding front $\rho$ is prescribed and is sufficiently regular; together with the results of Section \ref{sec1}, this in particular implies that the solution is radial. To this end, we follow a classic technique using changes of variables \cite{DautrayLions}: in our case, the space domain $\mathcal{C}_{R-\rho(t), R}$ is transformed into the fixed annulus $\mathcal{C}_{R-\rho_0, R}$. Correspondingly, the wave equation is transformed into a second order hyperbolic equation in divergence form. 
These problems are well studied (see e.g.\ \cite{LionsMagenes1972} for hyperbolic equations with coefficients and \cite{Lions64,Rog66,BC73,Coop75,Sikorav90,Kozh01} for applications to time-dependent domains),
however we were unable to find in literature a proof directly applicable to our setting. Therefore we provide a uniqueness result for such equation employing methods by Ladyzenskaya \cite{Lady}.  Our proof is based on a suitable modification of a result proven in \cite[Thm. 3.10]{DMLuc15}, where the authors consider a domain with a growing crack.

We first state a uniqueness result for a damped wave equation with coefficients dependent on time and space. Afterwards we will recast problem \eqref{princeq} in an equation of this type.
Let $T>0$ and $\mathcal{C}$ be an open domain of $\R^2$ with Lipschitz boundary. 
We introduce the coefficients with the following assumptions:
\begin{equation} \label{assump-general}
\begin{split}
B  &\in \mathrm{Lip}([0,T]; L^\infty(\mathcal{C};\R^{2\times2}_{sym})) \quad \text{with coercivity constant}\ c_B>0 , \\ 
a & \in \mathrm{Lip}([0,T]; L^\infty(\mathcal{C};\R^2)) , \\
b & \in \mathrm{Lip}([0,T]; L^\infty(\mathcal{C};\R^2)) \quad \text{with}\ \mathrm{div} \, b \in \mathrm{Lip}([0,T];L^\infty(\mathcal{C})) ,  \\ 
c  & \in \mathrm{Lip}([0,T]; L^\infty(\mathcal{C})) .
\end{split}
\end{equation}
Then we introduce the following definition.

\begin{defin}\label{defgensol}
	We say that $U:[0,T] \times \mathcal{C} \to \R$ is a generalized solution of problem
	\begin{equation}\label{princeqU}
	\begin{dcases}
	U_{tt} -  \mathrm{div} (B(t) \nabla U)  + a(t) \cdot \nabla U - b(t) \cdot\nabla U_t  + c(t) U_t  = 0   & \text{in}\ (0,T)\times\mathcal{C} ,
	\\
	U(t,\xi) = 0 \qquad &  t \in (0,T), \, \xi \in  \partial\mathcal{C},
	\\
	U(0,\xi) = 0 \qquad &  \xi \in \mathcal{C},
	\\
	U_t(0,\xi) = 0 \qquad &  \xi \in \mathcal{C},
	\end{dcases}
	\end{equation}
	if
	\begin{enumerate}
		\item[(i)] $U \in L^\infty((0,T); H^1_0(\mathcal{C})$,
		\item[(ii)] $U_t \in L^\infty((0,T); L^2(\mathcal{C}))$, 
		\item[(iii)] $U_{tt} \in L^2((0,T); H^{-1}_0(\mathcal{C})$,
	\end{enumerate}
	and $U$ {satisfies}
	\begin{align*}
	\langle U_{tt}(t), V\rangle_{H^1_0(\mathcal{C})} + \langle B(t) \nabla U(t), \nabla V \rangle_{L^2(\mathcal{C})} &+ \langle a(t) \cdot \nabla U(t), V \rangle_{L^2(\mathcal{C})} 
	\\
	&+ \langle U_t(t), \mathrm{div}(b(t) V) \rangle_{L^2(\mathcal{C})}  
	+ \langle U_t(t), c(t) V \rangle_{L^2(\mathcal{C})} = 0
	\end{align*}
	for a.e.\ $t \in [0,T]$ and every $V \in H^1_0(\mathcal{C})$.
\end{defin}

Then we have the following result.

\begin{lemma}\label{uniquelemma}
	There {is} a unique generalized solution for problem \eqref{princeqU}, i.e.\ $U=0$.
\end{lemma}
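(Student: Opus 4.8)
The plan is to use the classical energy method of Ladyzenskaya for hyperbolic equations, adapted to the low-regularity setting of generalized solutions. Since one cannot directly test the equation against $U_t$ (which lacks enough regularity in time), I would instead fix $s\in(0,T)$ and test against the auxiliary function
\[
V(t,\xi):=\begin{dcases}-\int_t^s U(\tau,\xi)\,d\tau & \text{if }0\le t\le s,\\ 0 & \text{if }s<t\le T,\end{dcases}
\]
which belongs to $H^1_0(\mathcal{C})$ for each $t$, is absolutely continuous in time with $V_t(t,\cdot)=U(t,\cdot)$ for $t<s$, and satisfies $V(s,\cdot)=0$. First I would substitute $V$ into the weak formulation of Definition \ref{defgensol}, integrate in $t$ over $(0,s)$, and integrate by parts in time in each term, using $U(0,\cdot)=0$, $U_t(0,\cdot)=0$, $V(s,\cdot)=0$ to kill the boundary contributions. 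The leading term $\int_0^s\langle U_{tt},V\rangle\,dt$ becomes $-\int_0^s\|U_t(t)\|_{L^2}^2\,dt$ after two integrations by parts (one moving $\partial_t$ off $U_{tt}$, picking up $\langle U_t,V_t\rangle=\langle U_t,U\rangle$, then noting $\int_0^s\langle U_t,U\rangle = \tfrac12\|U(s)\|_{L^2}^2$), and the principal elliptic term $\int_0^s\langle B(t)\nabla U(t),\nabla V(t)\rangle\,dt$ becomes, after integrating by parts in time and using $\partial_t\nabla V=\nabla U$, an expression of the form $-\tfrac12\langle B(0)\nabla V(0),\nabla V(0)\rangle + \tfrac12\int_0^s\langle \dot B(t)\nabla V(t),\nabla V(t)\rangle\,dt$. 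Here the Lipschitz-in-time assumption on $B$ and its coercivity constant $c_B$ are exactly what is needed.

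After collecting terms, I expect to arrive at an identity roughly of the shape
\[
\|U(s)\|_{L^2(\mathcal{C})}^2 + c_B\|\nabla V(0)\|_{L^2(\mathcal{C})}^2 \le C\int_0^s\Big(\|U(t)\|_{L^2(\mathcal{C})}^2 + \|\nabla V(t)\|_{L^2(\mathcal{C})}^2 + \|U_t(t)\|_{L^2(\mathcal{C})}^2\Big)\,dt + (\text{lower order}),
\]
where the remaining lower-order terms coming from $a\cdot\nabla U$, $b\cdot\nabla U_t$, $c\,U_t$ are handled by integrating by parts in time and/or space, moving derivatives onto the smooth-in-time factor $V$, and using the Lipschitz bounds in \eqref{assump-general} (in particular $\operatorname{div} b\in\mathrm{Lip}([0,T];L^\infty)$ is used to integrate the $b\cdot\nabla U_t$ term by parts in space against $V$, turning it into $\langle U_t, \operatorname{div}(bV)\rangle$ which is already how it appears in the weak formulation, then into a controllable term after a time integration by parts). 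One must also bound $\|\nabla V(t)\|_{L^2}$ in terms of $\int_t^s\|\nabla U\|_{L^2}$, hence in terms of $\int_0^s\|\nabla U\|_{L^2}^2$, which feeds back; to close the loop one introduces $W(t):=\int_0^t U(\tau,\cdot)\,d\tau$ so that $V(t)=W(t)-W(s)$, and sets up the Gronwall argument for the quantity $\|U(s)\|_{L^2}^2+\|\nabla W(s)\|_{L^2}^2$ (plus $\|U_t\|_{L^2}^2$ where needed). A standard Gronwall inequality then forces $U(s)=0$ for all $s$, hence $U\equiv0$.

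The main obstacle will be the careful bookkeeping of the time integrations by parts for the lower-order first-order terms (the $a$, $b$, $c$ terms), making sure every derivative is moved onto a factor with enough regularity — $U$ and $V$ are only $L^\infty$ in time with $U\in L^\infty(H^1_0)$, $U_t\in L^\infty(L^2)$, so e.g.\ one cannot put $\partial_t$ on $\nabla U$, and the term $\langle b\cdot\nabla U_t,V\rangle$ must be treated as $-\langle U_t,\operatorname{div}(bV)\rangle$ and then integrated by parts in $t$ using $\partial_t V = U$ and the Lipschitz regularity of $b$ and $\operatorname{div} b$. A secondary technical point is justifying the integrations by parts in time at the level of the Bochner spaces involved; this is where the cited modification of \cite[Thm.~3.10]{DMLuc15} enters, and I would invoke the corresponding approximation/density argument (or the abstract integration-by-parts formula for functions in $W^{1,2}(0,T;H^{-1})\cap L^\infty(0,T;H^1_0)$) rather than reprove it.
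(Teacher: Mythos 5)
Your plan coincides with the paper's own proof: the same Ladyzenskaya-type test function $V(t)=-\int_t^s U(\tau)\,d\tau$, the same integrations by parts in time using $U(0)=0$, $U_t(0)=0$, $V(s)=0$, coercivity of $B$ applied to $\langle B(0)\nabla V(0),\nabla V(0)\rangle_{L^2(\mathcal{C})}$, the primitive $W(t)=\int_0^t U(\tau)\,d\tau$ (the paper's $Z$), and a Gronwall argument. Two slips to fix in the write-up. First, $\int_0^s\langle U_{tt}(t),V(t)\rangle\,dt=-\tfrac12\|U(s)\|_{L^2(\mathcal{C})}^2$, exactly as in your own parenthetical computation; it does \emph{not} become $-\int_0^s\|U_t(t)\|_{L^2(\mathcal{C})}^2\,dt$. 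Second, no term $\|U_t\|_{L^2(\mathcal{C})}^2$ may survive on the right-hand side of the final energy inequality, nor enter the Gronwall quantity: nothing on the left-hand side controls $U_t$, so Gronwall would not close. The point of the time integrations by parts you describe for the $b$- and $c$-terms (using $V_t=U$, $U=0$ on $\partial\mathcal{C}$, and the Lipschitz-in-time bounds in \eqref{assump-general}) is precisely that only $\|U(t)\|_{L^2(\mathcal{C})}^2$ and $\|V(t)\|_{H^1_0(\mathcal{C})}^2$ remain under the integral, together with the boundary term $\langle\nabla V(0),a(0)V(0)\rangle_{L^2(\mathcal{C})}$, which is absorbed into the coercive term by a weighted Young inequality. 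Finally, after substituting $V(t)=W(t)-W(s)$, the coefficient of $\|\nabla W(s)\|_{L^2(\mathcal{C})}^2$ on the left is of the form $C_1-Cs$, so Gronwall gives $U\equiv0$ only on a small interval $[0,s_0]$; one then iterates over $[s_0,2s_0]$, $[2s_0,3s_0]$, and so on, reaching $T$ in finitely many steps, as the paper does.
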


\begin{proof}
	We follow the proof of Dal Maso and Lucardesi \cite[Thm. 3.10]{DMLuc15} highlighting the differences in our case.
Let $U$ be a generalized solution of \eqref{princeqU}: we shall prove that $U=0$.
Let us fix $s \in (0,T)$ and choose a specific test function $V \in L^2((0,T); H^1_0(\mathcal{C}))$ defined by
	\begin{equation*}
	V(t):=
	\begin{dcases}
	- \int_{t}^{s} U(\tau) \, d\tau &\mbox{if } t \in [0,s],
	\\
	0 &\mbox{if } t \in [s,T].
	\end{dcases}
	\end{equation*} 
	Note that $V(T)=V(s)=0$ and that $V,V_t \in L^\infty((0,T); H^1_0(\mathcal{C})$ since
$U \in L^\infty((0,T); H^1_0(\mathcal{C})$ and
	\begin{equation*}
	V_t(t):=
	\begin{dcases}
	U(t) &\mbox{if } t \in [0,s),
	\\
	0 &\mbox{if } t \in (s,T].
	\end{dcases}
	\end{equation*} 
Using this specific choice of test function in Definition \ref{defgensol} and integrating over time we get
	\begin{align}\label{weakintfor}
	\nonumber
&	\int_{0}^{s} \langle U_{tt}(t), V(t)\rangle_{H^1_0(\mathcal{C})} \, {dt} + \langle B(t) \nabla U(t), \nabla V(t) \rangle_{L^2(\mathcal{C})} 
	\\
	&+\langle a(t) \cdot \nabla U(t), V(t) \rangle_{L^2(\mathcal{C})} + \langle U_t(t), \mathrm{div}(b(t) V(t)) \rangle_{L^2(\mathcal{C})} 	+ \langle U_t(t), c(t) V \rangle_{L^2(\mathcal{C})} \,dt = 0.
	\end{align}
	We now proceed estimating all five terms in the integration above.
	
	As for the first term, integrating by parts with respect to time we have
	\begin{align}\label{1term}
	\int_{0}^{s} \langle U_{tt}(t), V(t)\rangle_{H^1_0(\mathcal{C})} \,dt &= -\frac{1}{2} \|U(s)\|^2_{L^2(\mathcal{C})} ,
	\end{align}
	where we have used the fact that $U_t(0)=0$ and $V(s)=0$ in $H^1_0(\mathcal{C})$.
	
	For the second term we proceed as follows: since $V \in \mathrm{Lip}([0,T]; H^1_0(\mathcal{C}))$, by \eqref{assump-general} we have $B \nabla V \in \mathrm{Lip}([0,T]; L^2(\mathcal{C}))$, so integrating again by parts with respect to time and using the fact that $V(s)=0$ in $H^1_0(\mathcal{C})$ we get
	\begin{align}\label{2term}
	\int_{0}^{s} \langle B(t) \nabla U(t), \nabla V(t) \rangle_{L^2(\mathcal{C})} \,dt = -\frac{1}{2} \langle \nabla V(0), B(0) \nabla V(0) \rangle_{L^2(\mathcal{C})}  -\frac{1}{2}  \int_{0}^{s} \langle \nabla V(t), B_t(t) \nabla V(t) \rangle_{L^2(\mathcal{C})}\,dt.
	\end{align}
	
	For the third term we proceed as follows: by \eqref{assump-general} we have $a_t \in L^\infty((0,T); L^{\infty}(\mathcal{C}; \R^2))$, 
so integrating by parts with respect to time we obtain
	\begin{equation}\label{3term}\begin{split}
	 &\int_{0}^{s} \langle a(t) \cdot \nabla U(t), V(t) \rangle_{L^2(\mathcal{C})} \,dt = \int_{0}^{s} \langle \nabla V_t(t), a(t) V(t) \rangle_{L^2(\mathcal{C})} \,dt 
	\\
	& = - \langle \nabla V(0), a(0) V(0) \rangle_{L^2(\mathcal{C})} - \int_{0}^{s} \langle \nabla V(t), a_t(t) V(t) \rangle_{L^2(\mathcal{C})} \,dt - \int_{0}^{s} \langle \nabla V(t), a(t) U(t) \rangle_{L^2(\mathcal{C})} \,dt ,
	\end{split}\end{equation}
	where we have used the fact that $V(s)=0$ in $H^1_0(\mathcal{C})$.
	
	For the fourth term we use the following relation: $\mathrm{div}(bV) = V \mathrm{div}\,b + \nabla V \cdot b$. By \eqref{assump-general}  we can proceed as for the second term. Integrating by parts with respect to time we obtain
	\begin{equation}\label{4term-a}\begin{split}
	&\int_{0}^{s} \langle U_t(t), V(t) \, \mathrm{div} \, b(t) \rangle_{L^2(\mathcal{C})} \,dt 
	\\ 
	&= 
	- \int_{0}^{s} \langle U(t),  V_t(t) \, \mathrm{div}\,b(t) \rangle_{L^2(\mathcal{C})} \,dt - \int_{0}^{s} \langle U(t), V(t) \, (\mathrm{div}\,b)_t(t) \rangle_{L^2(\mathcal{C})} \,dt ,
	\end{split}  \end{equation}
	where we have used the fact that $V(s)=0$ and $U_t(0)=0$. Moreover integrating with respect to time and space we get 
	\begin{equation}\label{4term-b}\begin{split}
	& \int_{0}^{s} \langle U_t(t), \nabla V(t) \cdot b(t) \rangle_{L^2(\mathcal{C})} \,dt 
	\\ 
	& = \frac{1}{2} \int_{0}^{s} \langle \mathrm{div} \, b(t) , |U(t)|^2 \rangle_{L^1(\mathcal{C})} \,dt -  \int_{0}^{s}  \langle U(t), \nabla V(t) \cdot b_t(t) \rangle_{L^2(\mathcal{C})} ,
	\end{split}\end{equation}
	where we have used the fact that $V(s)=0$, $U(0)=0$, and $U(t)=0$ on $\partial\mathcal{C}$.
	
	For the fifth term we integrate by parts with respect to time and get
	\begin{equation}\label{5term}
	\int_{0}^{s} \langle U_t(t), c(t) V \rangle_{L^2(\mathcal{C})} \,dt = - \int_{0}^{s} \langle U(t), c(t) U(t) \rangle_{L^2(\mathcal{C})} \,dt
- \int_{0}^{s} \langle U(t), c_t(t) V(t) \rangle_{L^2(\mathcal{C})} \,dt  ,
	\end{equation}
	where we have used the fact that $V(s)=0$ and $U(0)=0$ in $H^1_0(\mathcal{C})$.
	
	Finally \eqref{weakintfor}--\eqref{5term} yield
	\begin{equation}\label{weakintfor2}\begin{split}
	&\frac{1}{2} \|U(s)\|^2_{L^2(\mathcal{C})} + \frac{1}{2} \langle \nabla V(0), B(0) \nabla V(0) \rangle_{L^2(\mathcal{C})} =  -\frac{1}{2}\int_{0}^{s}  \langle \nabla V(t), B_t(t) \nabla V(t) \rangle_{L^2(\mathcal{C})} \, dt 
	\\
	&-\langle \nabla V(0), a(0) V(0) \rangle_{L^2(\mathcal{C})} 
    - \int_{0}^{s} \langle \nabla V(t), a_t(t) V(t) \rangle_{L^2(\mathcal{C})} \,dt - \int_{0}^{s} \langle \nabla V(t), a(t) U(t) \rangle_{L^2(\mathcal{C})} \,dt
	\\
	& 
	- \int_{0}^{s} \langle U(t),  V_t(t) \, \mathrm{div}\,b(t) \rangle_{L^2(\mathcal{C})} \,dt - \int_{0}^{s} \langle U(t), V(t) \, (\mathrm{div}\,b)_t(t) \rangle_{L^2(\mathcal{C})} \,dt
	\\
	&
	+\frac{1}{2} \int_{0}^{s} \langle \mathrm{div} \, b(t) , |U(t)|^2 \rangle_{L^1(\mathcal{C})} \,dt -  \int_{0}^{s}  \langle U(t), \nabla V(t) \cdot b_t(t) \rangle_{L^2(\mathcal{C})} \, dt
\\&
- \int_{0}^{s} \langle U(t), c(t) U(t) \rangle_{L^2(\mathcal{C})} \,dt 
- \int_{0}^{s} \langle U(t), c_t(t) V(t) \rangle_{L^2(\mathcal{C})} \, dt .
	\end{split} 
\end{equation}
By coercivity of $B$ we deduce that
	\begin{equation}\label{ineq1}
	\langle \nabla V(0), B(0) \nabla V(0) \rangle_{L^2(\mathcal{C})} \geq c_B \|\nabla V(0)\|^2_{L^2(\mathcal{C})} .
	\end{equation}
	From now on, $C$ will denote a positive constant which may change from line to line, independent of $s$.
	Since 
$\|B_t(t)\|_{L^\infty(\mathcal{C})} \leq C$ for a.e.\ $t \in (0,T)$, we deduce that
	\begin{equation}\label{ineq2}
		-\frac{1}{2}\int_{0}^{s} \langle \nabla V(t), B_t(t) \nabla V(t) \rangle_{L^2(\mathcal{C})}\,dt \leq C \int_{0}^{s} \| V(t)\|^2_{H^1_0(\mathcal{C})}\,dt .
	\end{equation}
Since $a,a_t\in L^\infty((0,T)\times\mathcal{C})$, we obtain that 
	\begin{equation}\label{ineq3}
	-\int_{0}^{s} \langle \nabla V(t), a_t(t) V(t) \rangle_{L^2(\mathcal{C})} \,dt - \int_{0}^{s} \langle \nabla V(t), a(t) U(t) \rangle_{L^2(\mathcal{C})} \,dt 
	\leq C \int_{0}^{s} (\|V(t)\|^2_{H^1_0(\mathcal{C})} + \|U(t)\|^2_{L^2(\mathcal{C})} ) \, dt .
	\end{equation}
By \eqref{assump-general}
	\begin{equation}\label{ineq4}\begin{split}
&	- \int_{0}^{s} \langle U(t),  V_t(t) \, \mathrm{div}\,b(t) \rangle_{L^2(\mathcal{C})} \,dt - \int_{0}^{s} \langle U(t), V(t) \, (\mathrm{div}\,b)_t(t) \rangle_{L^2(\mathcal{C})} \,dt \\
&	+\frac{1}{2} \int_{0}^{s} \langle \mathrm{div} \, b(t) , |U(t)|^2 \rangle_{L^1(\mathcal{C})} \,dt
    -  \int_{0}^{s}  \langle U(t), \nabla V(t) \cdot b_t(t) \rangle_{L^2(\mathcal{C})} \leq  C \int_{0}^{s} (\|V(t)\|^2_{H^1_0(\mathcal{C})} + \|U(t)\|^2_{L^2(\mathcal{C})} ) \, dt .
	\end{split}\end{equation}
Similarly,
	\begin{equation}\label{ineq5}
- \int_{0}^{s} \langle U(t), c(t) U(t) \rangle_{L^2(\mathcal{C})} \,dt 
- \int_{0}^{s} \langle U(t), c_t(t) V(t) \rangle_{L^2(\mathcal{C})} \, dt
	\leq C \int_{0}^{s} (\|V(t)\|^2_{H^1_0(\mathcal{C})} + \|U(t)\|^2_{L^2(\mathcal{C})} ) \, dt .
	\end{equation}
	Finally, using the fact that $a \in \mathrm{Lip}((0,T); L^\infty(\mathcal{C};\R^2))$, by a weighted Young inequality there is a positive constant $\varepsilon>0$ such that
\[	\langle \nabla V(0), a(0) V(0) \rangle_{L^2(\mathcal{C})} \leq \varepsilon \|\nabla V(0)\|^2_{L^2(\mathcal{C})} + \frac{C}\varepsilon \| V(0)\|^2_{L^2(\mathcal{C})} \leq \varepsilon \|\nabla V(0)\|^2_{L^2(\mathcal{C})} + \frac{C}\varepsilon \int_{0}^{s} \|U(t)\|^2_{L^2(\mathcal{C})}  \, dt . \]
By a suitable choice of $\varepsilon$, \eqref{weakintfor2}--\eqref{ineq5} provide us with the following estimate:
	\begin{align}\label{201023}
	\|U(s)\|^2_{L^2(\mathcal{C})} + C_1 \|\nabla V(0)\|^2_{L^2(\mathcal{C})} \leq C_2 \int_{0}^{s} ( \|U(t)\|^2_{L^2(\mathcal{C})} + \|V(t)\|^2_{H^1_0(\mathcal{C})} ) \, dt ,
	\end{align}
	for two positive constants $C_1$ and $C_2$.
	
	Now we define the auxiliary function
	\begin{equation*}
	Z(s) :=  \int_{0}^{s} U(\tau) \, d\tau ,
	\end{equation*}
	so  we can rewrite $V(t) = Z(t) - Z(s)$ for every $t \in [0,s]$. In particular, we plug the following estimates into \eqref{201023}:
	\begin{align*}
	 \|\nabla V(0)\|^2_{L^2(\mathcal{C})} & = \|\nabla Z(s)\|^2_{L^2(\mathcal{C})} ,
	\\
	 \int_{0}^{s}  \|V(t)\|^2_{H^1_0(\mathcal{C})}  \, dt & \leq 2s \|Z(s)\|^2_{H^1_0(\mathcal{C})} + 2 \int_{0}^{s} \|Z(t)\|^2_{H^1_0(\mathcal{C})} \,dt ,
	\\
	 \|Z(s)\|^2_{L^2(\mathcal{C})} & \leq 2T \int_{0}^{s} \|U(t)\|^2_{L^2(\mathcal{C})} \,dt .
	\end{align*}
	Hence \eqref{201023} implies
	\begin{equation*}
	\|U(s)\|^2_{L^2(\mathcal{C})} + (C_1 {-} 2C_2s) \|Z(s)\|^2_{H^1_0(\mathcal{C})} \leq (2TC_1 {+} 2C_2) \int_{0}^{s} ( \|U(t)\|^2_{L^2(\mathcal{C})} + \|Z(t)\|^2_{H^1_0(\mathcal{C})} ) \, dt .
	\end{equation*}
By choosing $s=s_0$ sufficiently small, we can apply Gronwall's Lemma and obtain $U=0$ in $[0,s_0]$.
By recursively applying the same argument to $[s_0,2s_0]$, $[2s_0,3s_0]$, etc., in a finite number of steps we obtain uniqueness on all the interval $[0,T]$.
\end{proof}

In the next lemma we consider a family of time-dependent Lipschitz domains $\mathcal{C}(t)$ and denote $\mathcal{C}:=\mathcal{C}(0)$.
We assume that $\mathcal{C}(t)$ is mapped into  $\mathcal{C}$ via a time-dependent diffeomorphism. Define 
\begin{equation*}
\mathcal{O} := \{(t,x) \in [0,+\infty) \times \R^2 \,|\, 0< t < T, \ x \in {\mathcal{C}}(t) \}.
\end{equation*} 
We check that, under suitable assumptions on such diffeomorphisms, a hyperbolic equation is still transformed into a hyperbolic equation.

\begin{lemma}\label{lemmaB}
	 Let $\Psi: \overline{\mathcal{O}} \to \R^2$ and $\Phi: [0,T] \times \overline{\mathcal{C}} \to \R^2$ be two functions of class $C^{1,1}$ such that, for every $t \in [0,T]$, $\Psi(t,\cdot)$ maps $\overline{\mathcal{C}}(t)$ into $ \overline{\mathcal{C}}$ and $\Phi(t,\cdot)$ maps $\overline{\mathcal{C}}$ into $\overline{\mathcal{C}}(t)$, and
	\begin{equation}\begin{split}
	\Psi(t, \Phi(t,\xi))&=\xi  \quad \forall t \in [0,T],  \forall \xi \in \overline{\mathcal{C}},
	\\
	\Phi(t, \Psi(t,x))&= x \quad \forall t \in [0,T], \forall x \in  \overline{ \mathcal{C}}(t).\label{eq: phipsi}
	\end{split}\end{equation}
	For every $t \in [0,T]$, assume that $\Psi(t,\cdot)$ and $\Phi(t,\cdot)$ are $C^{2,1}$-diffeomorphisms and that there exists $\delta>0$ such that 
	\begin{equation}\label{phicondition}
	|\dot{\Phi}(t,\xi)|^2 \leq 1-\delta \quad \forall t \in [0,T],  \forall \xi \in   \overline{\mathcal{C}}.
	\end{equation}
	Then the matrix $B$ given by 
	\begin{equation}\label{eqB}
	B(t,\xi) := D \Psi(t,\Phi(t,\xi)) D \Psi(t,\Phi(t,\xi))^T - \dot{\Psi}(t,\Phi(t,\xi)) \otimes  \dot{\Psi}(t,\Phi(t,\xi))  
	\end{equation}
	belongs to $ \mathrm{Lip}([0,T]; L^\infty(\mathcal{C};\R^{2\times2}_{sym}))$ and it is coercive.
\end{lemma}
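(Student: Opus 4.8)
The plan is to prove the two assertions separately. The Lipschitz regularity of $B$ follows from the fact that $B$ is built by composition and multiplication of Lipschitz, bounded functions; the coercivity follows from a pointwise algebraic identity obtained by differentiating the compatibility relations \eqref{eq: phipsi}, combined with the ``subsonic'' bound \eqref{phicondition}.

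For the regularity I would argue as follows. The $C^{1,1}$ hypothesis ensures that all first-order partial derivatives of $\Psi$ — in particular each entry of $D\Psi$ and of $\dot\Psi$ — are bounded and Lipschitz on $\overline{\mathcal O}$, and likewise $D\Phi$ and $\dot\Phi$ are bounded and Lipschitz on $[0,T]\times\overline{\mathcal C}$. The map $(t,\xi)\mapsto(t,\Phi(t,\xi))$ is then Lipschitz from $[0,T]\times\overline{\mathcal C}$ into $\overline{\mathcal O}$, so by composition the matrix field $(t,\xi)\mapsto D\Psi(t,\Phi(t,\xi))$ and the vector field $(t,\xi)\mapsto\dot\Psi(t,\Phi(t,\xi))$ are bounded and Lipschitz on $[0,T]\times\overline{\mathcal C}$. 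Each entry of $B$ in \eqref{eqB} is a finite sum of products of such functions, hence Lipschitz in $(t,\xi)$ jointly; in particular $t\mapsto B(t,\cdot)$ is Lipschitz with values in $L^\infty(\mathcal C;\R^{2\times2})$. Symmetry is immediate, since $MM^T$ and $v\otimes v$ are symmetric for every matrix $M$ and vector $v$.

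For the coercivity, fix $(t,\xi)$ and write $A:=D\Psi(t,\Phi(t,\xi))$, $w:=\dot\Psi(t,\Phi(t,\xi))$, $p:=\dot\Phi(t,\xi)$, so that $B(t,\xi)v\cdot v=|A^Tv|^2-(w\cdot v)^2$ for every $v\in\R^2$. Differentiating the identity $\Psi(t,\Phi(t,\xi))=\xi$ in \eqref{eq: phipsi} with respect to $t$ gives $w=-Ap$, hence $w\cdot v=-p\cdot A^Tv$ and, by Cauchy--Schwarz and \eqref{phicondition}, $(w\cdot v)^2\le|p|^2\,|A^Tv|^2\le(1-\delta)\,|A^Tv|^2$; consequently $B(t,\xi)v\cdot v\ge\delta\,|A^Tv|^2$. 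Differentiating $\Phi(t,\Psi(t,x))=x$ with respect to $x$ and evaluating at $x=\Phi(t,\xi)$ yields $D\Phi(t,\xi)\,A=I$, so $A$ is invertible with $A^{-1}=D\Phi(t,\xi)$ and $\|A^{-1}\|\le c_\Phi:=\|D\Phi\|_{L^\infty([0,T]\times\overline{\mathcal C})}<\infty$; therefore $|A^Tv|\ge|v|/\|A^{-1}\|\ge|v|/c_\Phi$ and
\begin{equation*}
B(t,\xi)v\cdot v\ge\frac{\delta}{c_\Phi^2}\,|v|^2\qquad\text{for a.e.\ }(t,\xi)\ \text{and every}\ v\in\R^2 ,
\end{equation*}
so $B$ is coercive with $c_B=\delta/c_\Phi^2$.

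I do not expect a genuine obstacle here: the computations are elementary and all the constants involved are uniform. The only point requiring some care is the bookkeeping of the chain rule for compositions of time- and space-dependent maps on the non-cylindrical set $\overline{\mathcal O}$, making sure the Lipschitz bounds for $D\Psi$, $\dot\Psi$, $D\Phi$ hold up to the (Lipschitz) lateral boundary and that $D\Psi(t,\cdot)$ is nonsingular there. Conceptually the essential mechanism is that the velocity bound \eqref{phicondition} on the change of variables is precisely what prevents the rank-one subtraction $-\,\dot\Psi\otimes\dot\Psi$ from destroying the positive definiteness of $D\Psi\,D\Psi^T$, i.e.\ it is the finite-speed-of-propagation condition that guarantees the transformed equation is still uniformly hyperbolic.
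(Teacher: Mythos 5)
Your proposal is correct and follows essentially the same route as the paper: the identity $\dot\Psi(t,\Phi(t,\xi))=-D\Psi(t,\Phi(t,\xi))\,\dot\Phi(t,\xi)$ obtained by differentiating \eqref{eq: phipsi} in time, then Cauchy--Schwarz together with \eqref{phicondition} to absorb the rank-one term, and a uniform lower bound on $|D\Psi^T\eta|$. The only (welcome) difference is that you justify this last bound explicitly via $D\Phi(t,\xi)\,D\Psi(t,\Phi(t,\xi))=I$ and the $L^\infty$ bound on $D\Phi$, yielding the constant $\delta/\|D\Phi\|_\infty^2$, whereas the paper simply asserts the uniform positive definiteness of $D\Psi\,D\Psi^T$ with an unspecified constant $c_\Psi$.
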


\begin{proof}
	The regularity properties and the symmetry of $B$ follow directly by definition and by the assumptions on $\Psi$ and $\Phi$. We prove it is coercive. We first recall that it holds
	\begin{equation*}
	\dot{\Psi}(t,\Phi(t,\xi)) = - D \Psi(t,\Phi(t,\xi)) \, \dot{\Phi}(t,\xi)  \quad \forall t \in [0,T],  \forall \xi \in   \overline{\mathcal{C}},
	\end{equation*}
	which stems out  from  deriving \eqref{eq: phipsi} with respect to time on both sides. Furthermore, under the above regularity assumptions, the matrix $D \Psi(\cdot,\cdot) D \Psi(\cdot,\cdot)^T$ is positive definite uniformly with respect to $(t,x) \in \overline{\mathcal{O}}$, so that it exists a constant $c_\Psi>0$ with
	\begin{equation*}
	(D \Psi(t,\Phi(t,\xi)) D \Psi(t,\Phi(t,\xi))^T\eta)\cdot \eta =\left|D \Psi(t,\Phi(t,\xi))^T\eta\right|^2\ge c_\Psi|\eta|^2
	\end{equation*}
	for all $ (t, \xi) \in  [0,T] \times \overline{\mathcal{C}}$.
	Hence, by the definition of $B$  (cf.\ \eqref{eqB}), for every $\eta \in \R^2$, one has
	\begin{align*}
	&(B(t,\xi) \eta)\cdot \eta
	=
	(D \Psi(t,\Phi(t,\xi)) D \Psi(t,\Phi(t,\xi))^T\eta)\cdot \eta - (\dot{\Psi}(t,\Phi(t,\xi)) \otimes  \dot{\Psi}(t,\Phi(t,\xi)) \eta)\cdot \eta 
	\\
	&=(D \Psi(t,\Phi(t,\xi)) D \Psi(t,\Phi(t,\xi))^T \eta)\cdot \eta   - [D \Psi(t,\Phi(t,\xi)) \, \dot{\Phi}(t,\xi) \otimes D \Psi(t,\Phi(t,\xi)) \, \dot{\Phi}(t,\xi)]\eta \cdot \eta
	\\
	&=(D \Psi(t,\Phi(t,\xi)) D \Psi(t,\Phi(t,\xi))^T \eta)\cdot \eta   - \left|(D \Psi(t,\Phi(t,\xi)) \, \dot{\Phi}(t,\xi) \cdot \eta\right|^2 
	\\
	&\geq|(D \Psi(t,\Phi(t,\xi))^T \eta|^2   - |\dot{\Phi}(t,\xi)|^2 |D \Psi(t,\Phi(t,\xi))^T \eta|^2 
	\\
	&\geq \delta  |D \Psi(t,\Phi(t,\xi))^T \eta|^2 \geq \delta c_\Psi|\eta|^2,
	\end{align*}
	for every $t \in [0,T]$ and for every $\xi \in \overline{\mathcal{C}}$, where in the last inequality we have used the assumption on $\dot{\Phi}$ (cf.\ \eqref{phicondition}).
\end{proof}

We are now ready to prove the uniqueness of a solution for problem \eqref{princeq} for fixed debonding front $\rho$, by providing an explicit diffeomorphism that maps the annulus $\mathcal{C}_{R-\rho(t),R}$ into the fixed domain $\mathcal{C}_{R-\rho_0,R}$ for every $t \in [0,T]$.  (Recall \eqref{hprho}--\eqref{def:domain-time} for the definition of the domains.) The idea is to consider an affine transformation of the radius depending on time, namely $r \mapsto p(t)r + q(t)$, such that for every $t \in [0,T]$
\begin{equation*}
\begin{cases}
p(t)R + q(t) = R,
\\
p(t)(R{-}\rho(t)) + q(t) = \rho_0,
\end{cases}
\end{equation*}
which yields to
\begin{equation*}
\begin{cases}
p(t)= \frac{R-\rho_0}{\rho(t)},
\\
q(t) = R \left( 1- \frac{R-\rho_0}{\rho(t)} \right).
\end{cases}
\end{equation*}
This generates the following change of variables in the plane depending on time: 
\begin{equation*}
\Psi: [0,T] \times (\R^2{\setminus}\{0\}) \to \R^2 , \quad (t,x) \mapsto \Psi(t,x),
\end{equation*}
where $\Psi(t,x)$ is given by
\begin{equation}\label{psieq}
\Psi(t,x):= \left(p(t)x_1 + q(t) \frac{x_1}{|x|} , \, p(t)x_2 + q(t) \frac{x_2}{|x|} \right),
\end{equation}
and, for every $t \in [0,T]$, $\Psi(t, \overline{\mathcal{C}}_{R-\rho(t),R}) =  \overline{\mathcal{C}}_{R-\rho_0,R}$. Moreover we define 
\begin{equation*}
\Phi: [0,T] \times (\R^2{\setminus}\{0\}) \to \R^2, \quad (t,\xi) \mapsto \Phi(t,\xi) ,
\end{equation*}
where $\Phi(t,\xi)$ is given by
\begin{equation}\label{phieq}
\Phi(t,\xi):= \left( \frac{|\xi| {-} q(t)}{p(t)} \frac{\xi_1}{|\xi|} , \, \frac{|\xi| {-} q(t)}{p(t)} \frac{\xi_2}{|\xi|} \right) ,
\end{equation} 
and, for every $t \in [0,T]$, $\Phi(t, \overline{\mathcal{C}}_{R-\rho_0,R}) =  \overline{\mathcal{C}}_{R-\rho(t),R}$.
We are now in a position to deduce that there is only one solution to problem  \eqref{princeq}, provided some regularity is assumed on the function $\rho$.
In particular it follows that such solution is the one determined in \ref{teoexistenceh} and fulfilling \eqref{formulah}.
\begin{prop}\label{prop-app-uniq}
	Let $\rho: [0, T] \to [\rho_0, R)$ be such that $\rho \in C^{2,1}([0,T])$, $\rho(0) = \rho_0  >0 $ and $0 \leq \dot{\rho}(t) <1$ for every $t \in [0,T]$.
	Then problem \eqref{princeq} admits at most one solution.
\end{prop}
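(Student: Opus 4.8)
The plan is to reduce problem \eqref{princeq} to the fixed-domain problem \eqref{princeqU} through the explicit time-dependent change of variables \eqref{psieq}--\eqref{phieq}, and then to invoke Lemma \ref{uniquelemma}. First I would argue by subtraction: if $u^{(1)}$ and $u^{(2)}$ are two solutions of \eqref{princeq} in the sense of Definition \ref{def:sol}, then $\bar u := u^{(1)}-u^{(2)}\in W^{1,2}(\mathcal{O}_\rho)$ satisfies the homogeneous problem, namely $\bar u_{tt}-\Delta_x\bar u+\alpha\bar u_t=0$ in $\mathcal{D}'(\mathcal{O}_\rho)$, vanishing trace on the whole lateral boundary of $\mathcal{O}_\rho$ (on $\{|x|=R\}$ the datum $w$ cancels, on the moving inner circle the condition is already homogeneous), and vanishing initial data $\bar u(0,\cdot)=0$, $\bar u_t(0,\cdot)=0$.

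Next I would set $U(t,\xi):=\bar u(t,\Phi(t,\xi))$ on $[0,T]\times\mathcal{C}$, with $\mathcal{C}$ the fixed reference annulus, so that equivalently $\bar u(t,x)=U(t,\Psi(t,x))$. Since $\rho\in C^{2,1}([0,T])$ and $\rho_0>0$, the functions $p,q$ are of class $C^{2,1}$, hence $\Psi$ and $\Phi$ are $C^{2,1}$ (and smooth in the space variable, the annuli avoiding the origin) and \eqref{eq: phipsi} holds. A chain-rule computation — differentiating $\bar u(t,x)=U(t,\Psi(t,x))$ twice in $t$ and rewriting $\Delta_x\bar u$ in the new variables — turns the equation for $\bar u$ into an equation for $U$ of the form \eqref{princeqU}: the second-order term is $-\mathrm{div}(B(t)\nabla U)$ with $B$ precisely the matrix \eqref{eqB}; the mixed second derivatives produce the term $-b\cdot\nabla U_t$ with $b=-2\,\dot\Psi\circ(\mathrm{id},\Phi)$; and $a$, $c$ collect the remaining lower-order contributions (involving $\ddot\Psi$, $\Delta_x\Psi$, $\mathrm{div}_\xi B$, and $\alpha$). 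The $C^{2,1}$ regularity of $\Psi,\Phi$ ensures that $B,a,b,c$ fulfil \eqref{assump-general}, in particular $\mathrm{div}\,b\in\mathrm{Lip}([0,T];L^\infty(\mathcal{C}))$. Moreover $U$ is a generalized solution in the sense of Definition \ref{defgensol}: the inclusions (i)--(iii) follow from $\bar u\in W^{1,2}(\mathcal{O}_\rho)$, the bound $U_{tt}\in L^2((0,T);H^{-1})$ being read off from the transformed equation, while the homogeneous boundary and initial conditions are inherited from those of $\bar u$.

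It remains to verify the coercivity hypothesis of Lemma \ref{lemmaB}, that is \eqref{phicondition}. Using the explicit form of $\Phi$ in \eqref{phieq} one finds $\dot\Phi(t,\xi)=\dfrac{\dot p(t)\,(R-|\xi|)}{p(t)^2}\,\dfrac{\xi}{|\xi|}$, whence $|\dot\Phi(t,\xi)|=\dot\rho(t)\,\dfrac{R-|\xi|}{R-\rho_0}\le\dot\rho(t)$ for every $(t,\xi)$ in the reference annulus. Since $\rho\in C^{2,1}([0,T])$, $\dot\rho$ is continuous on $[0,T]$ and $\dot\rho(t)<1$ for every $t$, so $\max_{[0,T]}\dot\rho=:1-\delta<1$; thus $|\dot\Phi|^2\le 1-\delta$ and Lemma \ref{lemmaB} yields that $B\in\mathrm{Lip}([0,T];L^\infty(\mathcal{C};\R^{2\times2}_{sym}))$ is coercive. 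All hypotheses of Lemma \ref{uniquelemma} are then satisfied, so $U\equiv 0$, i.e.\ $\bar u\equiv 0$ and $u^{(1)}=u^{(2)}$.

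I expect the main obstacle to be the bookkeeping in the change of variables: carrying out the chain rule so as to identify the coefficient of the second-order term as exactly the matrix \eqref{eqB}, and checking that all the lower-order coefficients genuinely belong to the regularity classes in \eqref{assump-general} — this is the step where $\rho\in C^{2,1}$, rather than merely Lipschitz, is essential. A secondary, somewhat delicate point is the verification that the transformed $U$ qualifies as a generalized solution per Definition \ref{defgensol}; should the mere inclusion $\bar u\in W^{1,2}(\mathcal{O}_\rho)$ not directly give the energy regularity in (i)--(ii), one upgrades it via the standard energy estimate for second-order hyperbolic equations with coefficients as in \eqref{assump-general}.
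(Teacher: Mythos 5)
Your proposal follows essentially the same route as the paper's proof: transform the problem to the fixed annulus via the diffeomorphisms \eqref{psieq}--\eqref{phieq}, check that the transformed coefficients satisfy \eqref{assump-general} and the coercivity condition \eqref{phicondition} (your computation of $|\dot\Phi|$ matches the paper's), and conclude with Lemma \ref{lemmaB} and Lemma \ref{uniquelemma}; whether you subtract the two solutions before or after the change of variables is immaterial by linearity. The points you flag as delicate (identifying $B$ as \eqref{eqB} via the chain rule and verifying that the transformed function is a generalized solution in the sense of Definition \ref{defgensol}) are exactly the steps the paper delegates to the computation in \cite{DMLuc15}, so your argument is correct and complete at the same level of detail.
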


\begin{proof}
	By the regularity of $\rho$ and by the fact that $\rho(t)\geq \rho_0 >0$ for every $t \in [0,T]$, one deduces that the functions $\Psi$ and $\Phi$ defined by \eqref{psieq} and \eqref{phieq} 
 are of class $C^{1,1}$, they satisfy \eqref{eq: phipsi},  and, for every $t\in[0,T]$, $\Psi(t,\cdot)$ and $\Phi(t,\cdot)$ are $C^{2,1}$-diffeomorphisms.  
By the continuity of $\dot \rho$, we may fix $\delta$ such that $\dot \rho(t)^2\le 1-\delta$ for all $t\in [0, T]$.
 Now let $\mathcal{O}_\rho$ be as in \eqref{def:domain-time} and $u \in W^{1,2}(\mathcal{O}_\rho)$ be a solution of problem \eqref{princeq}. For brevity let $\mathcal{C} := \mathcal{C}_{R-\rho_0,R}$, and introduce the following auxiliary function $U$, defined by
	\begin{equation*}
	U(t,\xi) := u(t,\Phi(t,\xi)) \quad \mbox{for every } t \in [0,T], \xi \in \mathcal{C}.
	\end{equation*}
	It holds equivalently $u(t, x)=U(t, \Psi(t, x))$ for every $(t,x) \in \mathcal{O}_\rho$. Performing the change of variables (cf., for instance,  \cite[Equation (2.25)]{DMLuc15}), one deduces that $U$ is a generalized solution of 
	\begin{equation*}
	\begin{dcases}
U_{tt} -  \mathrm{div} (B(t) \nabla U)  + a(t) \cdot \nabla U - 2b(t) \cdot\nabla U_t  + c(t) U_t  = 0
& \text{in}\ (0,T)\times\mathcal{C},
	\\
	U(t,\xi) = w(t)  &  t \in (0,T), \, \xi = R,
	\\
	U(t,\xi) = 0  & t \in (0,T), \, \xi = R-\rho_0,
 	\\
	U(0,\xi) = u_0(\Phi(0,\xi))  &  \xi \in \mathcal{C},
	\\
	U_t(0,\xi) = u_1(\Phi(0,\xi)) + \dot{\Phi}(0,\xi) \cdot \nabla u_0(\Phi(0,\xi))  &  \xi \in \mathcal{C},
	\end{dcases}
	\end{equation*}
	where for every $ t \in [0,T]$ and $ \xi \in \overline{\mathcal{C}}$ we have defined
	\begin{align*}
	B(t,\xi) &:= D \Psi(t,\Phi(t,\xi)) D \Psi(t,\Phi(t,\xi))^T - \dot{\Psi}(t,\Phi(t,\xi)) \otimes  \dot{\Psi}(t,\Phi(t,\xi)),
	\\
	a(t,\xi) &:= - \{B^T(t,\xi) \nabla (\mbox{det} D\Phi(t,\xi)) +\partial_t [ b(t,\xi) \,\mbox{det}D\Phi(t,\xi)]\} \,\mbox{det}D\Psi(t,\Phi(t,\xi))) + \alpha\dot{\Psi}(t,\Phi(t,\xi)),
	\\
	b(t,\xi) &:= - \dot{\Psi}(t,\Phi(t,\xi)),
	\\
	c(t,\xi) &:= \alpha  .
	\end{align*}
	By the regularity of $\rho$ and the consequential regularity of $\Phi$ and $\Psi$, we deduce that  assumptions \eqref{assump-general} hold.
	Moreover by the fact that $\dot{\rho}(t)^2 \leq 1-\delta$ for every $t \in [0,T]$, we can deduce, by direct computation, that $|\dot{\Phi}(t,\xi)|^2 \leq 1-\delta$ for every $t \in [0,T]$ and $\xi \in  \overline{\mathcal{C}}$. In fact,
	\begin{equation*}
	|\dot{\Phi}(t,\xi)|^2 = \left| \frac{\dot{q}(t) p(t) + (|\xi| {-} q(t)) \, \dot{p}(t)}{p^2(t)}\right|^2 = \frac{\left(R\dot{\rho}(t)\frac{R-\rho_0}{\rho^2(t)} - |\xi|\dot{\rho}(t) \frac{R-\rho_0}{\rho^2(t)} \right)^2}{\left(\frac{R-\rho_0}{\rho(t)}\right)^4} = \left(\frac{R{-}|\xi|}{R{-}\rho_0}\right)^2 (\dot{\rho}(t))^2 
	\end{equation*}
	where $\left(\frac{R-|\xi|}{R-\rho_0}\right)^2 \leq 1$ for every $\xi \in  \overline{\mathcal{C}}$.
	Then by Lemma \ref{lemmaB} we deduce that $ B \in \mathrm{Lip}([0,T]; L^\infty(\mathcal{C};\R^{2\times2}_{sym}))$ and it is coercive.  Hence the conclusion follows by Lemma \ref{uniquelemma}.
\end{proof}

\begin{rem}\label{rmk-app}
	We observe that the hypotheses of Proposition \ref{prop-app-uniq} can be weakened in the following way. Let $\rho: [0, T] \to [\rho_0, R)$ be such that $\rho \in C^{1}([0,T])$, $\rho(0) = \rho_0  >0 $ and $0 \leq \dot{\rho}(t) <1$ for  a.e.\ $t \in [0,T]$. Moreover, assume that there exists a finite partition $\{[t_{i},t_{i+1}]\}_{i=0}^m$ of the interval $[0,T]$, such that $\rho$ is $C^{2,1}([t_{i},t_{i+1}])$ for $i=0,\dots,m{-}1$. Then, it is enough to apply Proposition \ref{prop-app-uniq} on each subinterval $[t_{i},t_{i+1}]$ and  to update $\rho_0 = \rho(t_i)$ at each $i$-step. By  continuity of $\rho$, the uniqueness on the whole interval $[0,T]$ follows. 
\end{rem}

\bigskip

{\small
\subsection*{Acknowledgments} 
The authors would like to thank Corrado Maurini and Filippo Riva for fruitful discussions.
This work is part of the Project {\em Variational methods for stationary and evolution problems with singularities and interfaces} (PRIN 2017) funded by the Italian Ministry of Education, University, and Research.
GL and FS have been supported by the {\em Istituto Nazionale di Alta Matematica} (INdAM). 
}

\end{document}